\newtheorem{theorem}{Theorem}[section]
\newtheorem{lemma}[theorem]{Lemma}
\newtheorem{prop1}[theorem]{Proposition}
\newtheorem{remark}{Remark}[section]
\newtheorem{definition}{Definition}[section]
\newenvironment{proof}{{\bf Proof }}{\hbox{~} \hfill \rule{0.5em}{0.5em}\\}
\numberwithin{equation}{section}
\begin{document}
\title{On shape and topological optimization problems with constraints  Helmholtz equation and spectral problems%: Un papier autour du probl\`eme de type  Helmholtz (avec  $k^2$ valeur propre ou pas)%\footnote{{The authors declares that there is no conflict of interest regarding the publication of this paper.
\footnote{This work is supported by  NLAGA Project (Non Linear Analysis, Geometry and Applications Project(http://nlaga-simons.ucad.sn/index.php/nlaga-soft/14-home)) and Deutsche Forschungsgemeinschaft within the Priority program SPP 1962 "Non-smooth and Complementarity based Distributed Parameter Systems: Simulation and Hierarchical Optimization". The authors would like to thank Volker Schulz ( University Trier, Trier, Germany) and Luka Schlegel
(University Trier, Trier, Germany) for helpful and interesting discussions within the project Shape
Optimization Mitigating Coastal Erosion (SOMICE).}}         % Enter your title between curly braces
%\author{Mohamed Badahi\footnote{badahi1977@yahoo.fr}, Ibrahima FAYE \footnote{grandmbodj@hotmail.com}, Diaraf SECK,\footnote{dseck@ucad.sn}}        % Enter your name between curly braces
\date{}          % Enter your date or \today between curly braces

\maketitle
\centerline{\scshape Mame Gor Ngom \footnote{mamegor.ngom@uadb.edu.sn}}
\medskip
{\footnotesize
% please put the address of the first author
 \centerline{Universit\'e Alioune Diop de Bambey, }
  \centerline{  Ecole Doctorale des Sciences et Techniques et Sciences de la Soci\'et\'e. }
   \centerline{Laboratoire de Math\'ematiques de la D\'ecision et d'Analyse Num\'erique}
    \centerline{ (L.M.D.A.N) F.A.S.E.G)/F.S.T. }
}

\medskip
\centerline{\scshape Ibrahima Faye \footnote{ibrahima.faye@uadb.edu.sn}, Diaraf Seck \footnote{diaraf.seck@ucad.edu.sn }}
\medskip
{\footnotesize
% please put the address of the first author
 \centerline{Universit\'e Alioune Diop de Bambey, UFR S.A.T.I.C, BP 30 Bambey (S\'en\'egal),}
\centerline{  Ecole Doctorale des Sciences et Techniques et Sciences de la Soci\' et\'e. }
   \centerline{Laboratoire de Math\'ematiques de la D\'ecision et d'Analyse Num\'erique}
\centerline{ (L.M.D.A.N). }
}

\pagestyle{myheadings}
 \renewcommand{\sectionmark}[1]{\markboth{#1}{}}
\renewcommand{\sectionmark}[1]{\markright{\thesection\ #1}}

\begin{abstract}\noindent 
Coastal erosion describes the displacement of sand caused by the movement induced by tides, waves or currents.  Some of its wave phenomena are modeled by Helmholtz-type equations. Our purposes, in this paper are, first, to study optimal shapes obstacles to mitigate sand transport under the constraint of the Helmholtz equation.  And the second side of this work is related to Dirichlet and Neumann  spectral problems.We show the existence of optimal shapes in a general admissible set of  quasi open sets. And necessary  optimality conditions of first order are given in a regular framework.
\\

Keywords: Helmholtz equation, Shape  Optimization, Shape Derivative, Eigenvalue, Topological Optimization, Semi differential, Berkhoff equation, coastal erosion
\end{abstract}

\section{Introduction}
Coastal erosion is and will be a major and growing environmental problem.
Several phenomena contribute to the significant advance of the sea. These include climate change with the rise in sea level due to the melting of ice at the Earth's poles, the amplification of the tidal effect which leads to the transport of large masses of sand, storms etc. It should be noted in passing that storm surges are the immediate consequence of local weather conditions and can inflict serious damage to human life and property by flooding the coast. 
For a very long time, at least forty years or more, the study of this phenomenon has occupied the minds of scientists (hydro-geologists, climatologists, geologists, hydraulic engineers, mechanics, to name but a few). To understand these aspects, modelling of these phenomena is more than necessary.
A significant advance in the field of wave modelling was made by Berkhoff \cite{Ber1, Ber2}, who gave a refraction-diffraction type equation. It can be used for a wide range of ocean wave frequencies, since it passes, in the limit, to the deep and shallow water equations. The usefulness of the Berkhoff equation for obtaining good simulations of wave behaviour in a wide variety of different situations has been demonstrated by many researchers. We refer the interested reader to the following references : \cite{IAMB, PCP} and the references contained in these two articles.
To mitigate waves and sand transport, obstacles (groins, breakwaters, jetties and many other structures) are designed, built and constructed at the coast. But their geometric shapes and locations are usually determined by simple hydrodynamic assumptions, structural strength laws and empirical considerations.  It should be noted that in coastal engineering it is crucial to minimise wave reflection along the coast. And the use of emergent structures is an interesting and even efficient way to achieve this. 
Many types of waves involving different physical factors exist in the ocean. As in the elementary problem of a spring-mass system, all waves must be associated with a certain restoring force. 
But in this work, we consider a linear incident wave following Airy's theory for small amplitude waves. The scattering towards the structure is modeled by the Helmholtz equation which can be obtained from the Berkhoff equation see \cite{Ber1}. And, the objective is to study a minimal action under the constraint of the Helmholtz boundary problem with the geometry and the topology of the emerging obstacle or structure as the main variable.
In structural shape optimization problems, the aim is to improve the performance of the structure by modifying its boundaries. This can be numerically achieved by minimizing an objective function subjected to certain constraints \cite{Chara}.%(Hinton and Sienz, 1994; Ramm and al., 1994)(JE NE LES AI PAS TROUVE DANS LES REFERENCES A ENLEVER OU PAS?). 
All functions are related to the design variables, which have some of the coordinates of the key points in the boundary of the structure. The shape optimization methodology proceeds with the following two steps: the parametric and the non parametric methods as describe in Meske and al.  \cite {Meske}, Murat and Simon \cite{Murat} Pironneau \cite{Pironneau} and Sokolowski and Zolesio[  \cite{Sokolowski} and the topological optimization \cite{Sokolowski, Masmoudi}. 
%Meske and al. \cite {Meske} , Murat and Simon \cite{Murat}, Pironneau \cite{Pironneau} and Sokolowski and Zolesio \cite{SokoZo} and the topological optimization \cite{Sokolowski, Masmoudi}

 In these last time, many authors have been interested in the open  question of finding solutions to coastal erosion. Most of them offer models obtained empirically \cite{Idier} or by developing mathematical models \cite{FaFreSe}, see also  references therein these two papers. But the question still remains. In this work, we seek to better understand the causes of this phenomenon by using shape and topological optimization methods. 
Our approach is to use shape and topology optimization tools to  propose a qualitative study of coastal structures such as breakwaters, groins and other innovative shapes. Firstly, in order to seek a certain completeness of this work, we study an example of a shape functional problem under constraints of Helmholtz type equations or eigenvalue problems  with Dirichlet type boundary conditions. Then we look for optimality conditions according to several configurations: the case without eigenvalues, the situation where we are facing a spectral problem with single or multiple eigenvalues. This step will be followed by topological sensitivity calculations. In particular, we establish calculations of the solutions of the Helmholtz boundary problems describing the water waves scattered by the structure. This modifies its shape accordingly, in order to minimize a predefined cost function such as the energy of the water waves.
Let us quote that, these types of shape optimization and topological  problems associating a functional and a Helmholtz-type PDE or spectral problems  as constraint equation have already been studied in the literature, see for instance the works  due to  B. Samet et al.  \cite{Samet2}, S. Amstutz \cite{Amstutz, Amstutz1} and A. Novotny et al. \cite{NovotnySokoloZo}.
 And this paper is  a trial to gather various results, in one paper,  that had been already established in different works. In addition, we attempt to complete some interesting parts or questionnings related to our subject on coastal erosion.\\
The paper is organized as follows. In section 2, we give problem formulation and existence results of  optimization problems. In section 3, the shape derivatives of the functionals are given. The final section, is devoted to topological approach and a link between topological and shape derivative.

\section{Existence result of shape optimization problem}
\noindent
In this section, we consider several class of functional defined on an admissible set.
\subsection{Problem formulation}
Let $\Omega\subset\mathbb R^N,\,\,N\geq 2$ be an open domain with boundary $\partial\Omega.$ Let's consider, in this subsection, the following shapes functionals 
 \begin{equation}\label{fonct0}
J(\Omega)=j(\Omega,\eta)=\int_{\Omega}\lvert\nabla\eta (x)-A(x) \rvert^2dx+\int_{\Omega}\lvert\eta(x)-\eta_0(x) \rvert^2dx
\end{equation} 

 \begin{equation}\label{fonct1210}
J(K)=\int_{\Omega\backslash K}\lvert\nabla u_K (x)-A(x) \rvert^2dx+\int_{\Omega\backslash K}\lvert u_K(x)-u_0(x) \rvert^2dx
\end{equation}
 where we suppose that there exists an inclusion $K\subset \Omega$ inside the domain $\Omega$ i.e.,

and the problems
\begin{equation}\label{fonct12}
\min_{\Omega\in \mathcal{O}} J(\Omega)\end{equation}
\begin{eqnarray}\label{fonct121}
\min_{K\in \mathcal{O}} J(K)\end{eqnarray}
where $\eta$ is solution to
\begin{equation}\label{pge}\left\{\begin{array}{ccc}
%\begin{cases}
-\Delta\eta-k^2\eta=f\,\,\text{in}\,\,\Omega\\[0.3cm]
\;\eta=0\,\,\,\,\text{on}\,\,\partial\Omega%\; aux \;bords
%\end{cases}
\end{array}\right.\end{equation}
and
where $A\in L^2(\Omega\backslash K)$ and $u_0\in L^2(\Omega\backslash K)$ are given
and $u=u_K$  solution to the following Helmholtz equations.
\begin{equation}\label{pge1}
\begin{cases}
-\Delta u -k^2 u=g\,\,\text{in}\,\,\Omega\backslash K\\[0.4cm]
u=0\,\,\text{on}\,\,\partial K\\[0.4cm]
u=0\,\,\text{on}\,\,\partial\Omega\end{cases}
\end{equation}
where $k^2\neq 0$ stands for eigenvalue or not and $g\in L^2(\Omega\backslash K).$
and
where $\mathcal{O}$ is a set of admissible domains $\Omega\in\mathbb R^N,\,\,N\geq 2.$\\

\subsection{Existence of optimal shapes}
The aim of this subsection, is first, to get existence of an optimal shape  $\omega^*$ solution to the problem 
$$\min_{\omega\in\mathcal O}J(\omega),$$  under constraint (\ref{pge}) respectively (\ref{pge1}),
where $J$ is a functional  defined by (\ref{fonct0})  respectively (\ref{fonct1210}).
The principal objective of this paper is to prove that there exists a solution to every problem and to characterize the solutions. For this purpose, we shall be called to calculate for each problem, the shape derivative of the functional considered. 

Before going further, we first underline the existence of solutions to the problem (\ref{pge}) in the cases where $k^2$ is an eigenvalue or not. We have the first theorem.

\begin{theorem} %Let $\Sigma$ be the spectrum of -$\Delta$  and $\eta$ solution to

%\begin{equation}\label{0pge}\left\{\begin{array}{ccc}
%-\Delta\eta-\lambda\eta= f\,\,\text{in}\,\, \Omega\\
%\eta=0\,\,\text{on}\,\,\partial \Omega.
%\end{array}\right.\end{equation}
1.  There exists an at most countable set $\Sigma\subset\mathbb R$ such that the boundary value problem  

\begin{equation}\label{0pge}\left\{\begin{array}{ccc}
-\Delta\eta-\lambda\eta= f\,\,\text{in}\,\, \Omega\\
\eta=0\,\,\text{on}\,\,\partial \Omega.
\end{array}\right.\end{equation}
 has a unique weak solution for each  $f\in L^2(\Omega)$ if and only if $\lambda\notin\Sigma.$\\
%Moreover, there exists a constant $C$ depending only on $k^2$ and $\Omega$ such that 

%\begin{equation}\vert\vert \eta\vert\vert_{L^2(\Omega)}\leq C\vert\vert f\vert\vert_{L^2(\Omega)}.\end{equation}
2. If $\Sigma$ is infinite, then $\Sigma=\{\lambda_k\}_{k=1}^{+\infty},$ the value of the non decreasing sequence with $\lambda_k\rightarrow+\infty.$

\end{theorem}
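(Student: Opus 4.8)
The statement is the standard spectral theorem for the Dirichlet Laplacian, and the plan is to reduce it to the spectral theory of a compact self-adjoint operator obtained by inverting a shifted problem. First I would recast the boundary value problem (\ref{0pge}) in weak form: seek $\eta \in H_0^1(\Omega)$ with $\int_\Omega \nabla\eta\cdot\nabla v\,dx - \lambda\int_\Omega \eta v\,dx = \int_\Omega f v\,dx$ for every $v\in H_0^1(\Omega)$. Since the Dirichlet form $B[\eta,v]=\int_\Omega\nabla\eta\cdot\nabla v\,dx$ is coercive on $H_0^1(\Omega)$ by the Poincar\'e inequality, the shifted operator $L=-\Delta+\gamma$ (for a convenient constant $\gamma>0$, or even $\gamma=0$ here) is boundedly invertible by Lax--Milgram, yielding a solution operator $T=L^{-1}:L^2(\Omega)\to H_0^1(\Omega)$.

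The crucial step is compactness: composing $T$ with the Rellich--Kondrachov compact embedding $H_0^1(\Omega)\hookrightarrow L^2(\Omega)$ shows that $T$, viewed as an operator on $L^2(\Omega)$, is compact. Moreover $T$ is self-adjoint and positive, because $B$ is symmetric and coercive. I would then rewrite the equation $-\Delta\eta-\lambda\eta=f$ as the operator identity $(I-(\lambda+\gamma)T)\eta = Tf$ on $L^2(\Omega)$, so that unique solvability for every $f\in L^2(\Omega)$ becomes equivalent to the invertibility of $I-(\lambda+\gamma)T$.

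At this point the Fredholm alternative for compact operators applies: $I-(\lambda+\gamma)T$ fails to be invertible precisely when $(\lambda+\gamma)^{-1}$ is a (necessarily positive) eigenvalue of $T$. By the spectral theorem for compact self-adjoint operators, the eigenvalues of $T$ form a sequence $\sigma_1\geq\sigma_2\geq\cdots>0$ of finite multiplicity accumulating only at $0$. Defining $\Sigma$ as the corresponding set of values $\lambda=\sigma_j^{-1}-\gamma$ produces exactly the at-most-countable exceptional set of part~1. For part~2, since $\sigma_j\to 0^+$, the numbers $\lambda_k=\sigma_k^{-1}-\gamma$ form a non-decreasing sequence tending to $+\infty$, which yields the claimed structure whenever $\Sigma$ is infinite.

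The main obstacle is not any single estimate but assembling the pieces in the right order: one must verify coercivity of the shifted form to obtain a well-defined bounded inverse, then invoke Rellich--Kondrachov to upgrade that inverse to a \emph{compact} operator on $L^2(\Omega)$, and finally keep careful track of the correspondence $\lambda\leftrightarrow\sigma_j^{-1}-\gamma$ so that the monotonicity and the accumulation at $+\infty$ come out correctly. A minor point to watch is that boundedness of $\Omega$ (rather than smoothness of $\partial\Omega$) already suffices for the compact embedding; for the general quasi-open domains considered elsewhere in the paper one works with the intrinsic space $H_0^1(\Omega)$ and the same argument carries through unchanged.
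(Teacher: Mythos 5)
Your proposal is correct and follows essentially the same route as the paper, which proves this theorem only by citing Evans: the cited argument is precisely your reduction, via Lax--Milgram (with the shift unnecessary here thanks to Poincar\'e) and the Rellich--Kondrachov embedding, to a compact self-adjoint positive solution operator $T$ on $L^2(\Omega)$, followed by the Fredholm alternative and the spectral theorem for compact self-adjoint operators. Your bookkeeping $\lambda=\sigma_j^{-1}-\gamma$, giving an at most countable set $\Sigma$ with $\lambda_k\rightarrow+\infty$ when infinite, and your remark that boundedness of $\Omega$ (not boundary regularity) suffices for compactness of $H_0^1(\Omega)\hookrightarrow L^2(\Omega)$, are both accurate.
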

\begin{proof}(see \cite{Evans}).
\end{proof}

 \begin{remark}
 For these types of equations, in general, Lax Milgram's theorem does not allow to obtain the existence and uniqueness of the solution. It is for this reason, to be complete, that we review all the results dealing with this type of problem. The following result, that can be found in \cite{Evans}, uses the Fredholm alternative to get solution to the problem.
 \end{remark}
 
 \begin{theorem} The following statement holds:
 
 \begin{enumerate}
 \item for each $f\in L^2(\Omega)$ there exists a unique weak solution of the boundary-value problem (\ref{0pge})  or else

 \item there exists a weak solution $\eta\neq 0$ of the homogeneous problem
 
 \begin{equation}\label{0pge}\left\{\begin{array}{ccc}
-\Delta\eta-\lambda\eta= 0\,\,\text{in}\,\, \Omega\\
\eta=0\,\,\text{on}\,\,\partial \Omega.
\end{array}\right.\end{equation}
 
 \end{enumerate}
 
 \end{theorem}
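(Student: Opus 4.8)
The plan is to recast the boundary-value problem as an operator equation on $L^2(\Omega)$ and then invoke the Fredholm alternative for compact operators, exactly as the preceding remark suggests. First I would write the weak formulation: $\eta\in H_0^1(\Omega)$ is a weak solution of (\ref{0pge}) precisely when
\[
B[\eta,v]:=\int_{\Omega}\nabla\eta\cdot\nabla v\,dx-\lambda\int_{\Omega}\eta v\,dx=\int_{\Omega}f v\,dx\qquad\text{for all }v\in H_0^1(\Omega).
\]
The obstruction the remark points to is exactly that $B$ need not be coercive when $\lambda>0$, so Lax--Milgram cannot be applied directly and a detour is required.

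To circumvent this I would shift the spectral parameter. Choose $\gamma$ large enough (for instance $\gamma>\max\{\lambda,0\}$) so that the modified form $B_\gamma[\eta,v]:=B[\eta,v]+\gamma\int_{\Omega}\eta v\,dx=\int_{\Omega}\nabla\eta\cdot\nabla v\,dx+(\gamma-\lambda)\int_{\Omega}\eta v\,dx$ is bounded and coercive on $H_0^1(\Omega)$, coercivity following from the Poincar\'e inequality. Lax--Milgram then produces, for every $h\in L^2(\Omega)$, a unique $\eta\in H_0^1(\Omega)$ with $B_\gamma[\eta,v]=(h,v)_{L^2}$ for all $v$; denote the resulting solution operator by $L_\gamma^{-1}h:=\eta$.

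The key step is the observation that $\eta$ solves the original problem if and only if $B_\gamma[\eta,v]=(f+\gamma\eta,v)_{L^2}$, that is $\eta=L_\gamma^{-1}(f+\gamma\eta)$, which rearranges to $(I-K)\eta=h_0$ with $K:=\gamma L_\gamma^{-1}$ and $h_0:=L_\gamma^{-1}f$. Since $L_\gamma^{-1}$ maps $L^2(\Omega)$ boundedly into $H_0^1(\Omega)$ and the embedding $H_0^1(\Omega)\hookrightarrow L^2(\Omega)$ is compact by the Rellich--Kondrachov theorem, $K$ is a compact operator on $L^2(\Omega)$. The Fredholm alternative for $I-K$ then yields the dichotomy stated in the theorem: either $I-K$ is invertible, giving a unique $\eta$ for every $f\in L^2(\Omega)$, which is alternative (1); or $\ker(I-K)\neq\{0\}$, and any nonzero element of the kernel corresponds (taking $f=0$ above) to a nontrivial weak solution $\eta\neq 0$ of the homogeneous problem, which is alternative (2).

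The main obstacle is the passage from the non-coercive form $B$ to the compact-operator equation: once the coercive shift $B_\gamma$ is in place, establishing compactness of $K$ via Rellich--Kondrachov and checking that $\ker(I-K)$ is in exact correspondence with weak solutions of the homogeneous equation are routine. Thus the real content of the argument is the reduction itself, after which the Fredholm theory supplies the conclusion.
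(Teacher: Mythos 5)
Your proof is correct and follows essentially the same route as the paper: the paper simply cites Evans (Theorem 4, Chapter 6), whose proof is exactly the argument you reconstruct — shift the spectral parameter to obtain a coercive form, invert by Lax--Milgram, use Rellich--Kondrachov to get a compact operator, and apply the Fredholm alternative to $I-K$. You have merely written out in full the details the paper delegates to the reference, so there is nothing to correct.
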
  
 \begin{proof} 
 The proof of this theorem is based on that given by Evans (Theorem 4 of chapter 6). The operator  $\Delta u+\lambda u$ that we use in this case is a special case of the one used in \cite{Evans}.  
 $$Lu=-\sum_{i,j=1}^n (a^{ij}(x)u_{x_i})_{x_j}+\sum_{i=1}^nb^i(x)u_{x_i}+c(x) u.$$
 It suffices to take $a_{ij}(x)=1\,\, \text{if}\,\,i=j$ and $a_{ij}=0\,\,\text{if}\,\,i\neq j,\,\,b^i=0$ and $c=-\lambda.$ The proof uses Fredholm's alternative.

 \end{proof}

In the case where $k^2$ is an eigenvalue, we have the boundary value problem
\begin{equation}\label{00pge}\left\{\begin{array}{ccc}
-\Delta\eta-k^2\eta= 0\,\,\text{in}\,\, \Omega\\
\eta=0\,\,\text{on}\,\,\partial \Omega.
\end{array}\right.\end{equation} 
This boundary value problem has a non trivial solution $\eta\neq 0$ if and only if $k^2\in\Sigma.$ In this case, $k^2$ is called and eigenvalue and $\eta$ the associated eigenfunction. Recall the following definitions of the capacity and the quasi-open of a set.

\begin{definition} 
Let $D$ be a bounded open of $\mathbb{R}^N$.\\
For any compact $K$ in $D$, let 
\begin{eqnarray*}
cap_D(K)=\inf \left\{ \int_D\lvert\nabla v\rvert;\;v\in C_0^{\infty}(D),\;v\geq 1\;\text{on}\;K \right\} < +\infty.
\end{eqnarray*}
For any open $\omega$ in $D$, we pose 
\begin{eqnarray*}
cap_D(\omega):=\sup\left\{cap_D(K);\;\;K\;\text{compact},\;K\subset\omega \right\}.
\end{eqnarray*}
If $E$ is any subset of $D$, let
\begin{eqnarray*}
cap_D(E):=\inf\left\{cap_D(\omega);\;\;\omega\;\text{open},\;E\subset\omega \right\}.
\end{eqnarray*}
\end{definition}
\begin{definition}
A subset $\Omega$ of $D$ is said to be quasi-open if there exists a decreasing sequence of open $\omega_n$ such that
\begin{eqnarray*}
\lim_{n\to \infty}\;cap(\omega_n)=0\\\forall\;n,\;\;\Omega\cup\omega_n\;\;\text{is open.}
\end{eqnarray*}
\end{definition}
We have the following existence theorem.

 \begin{theorem} Consider $\mathcal D\subset \mathbb R^N$ a bounded open set and let $A$ and $u_0\in L^2(\mathcal D)$  be given.  Let $J$ be the functional defined by (\ref{fonct0}) or (\ref{fonct1210}) where $\eta$ and $u_K$ are solution respectively to (\ref{pge}) and (\ref{pge1}). Then the shape optimization problem (\ref{fonct0}) or(\ref{fonct121}) admit a solution, where 
 $$\mathcal O=\{\Omega\,\,\text{quasi-open},\,\,\Omega\subset \mathcal D,\,\,\vert\Omega\vert\leq c\},\,\,c>0.$$
 
 \end{theorem}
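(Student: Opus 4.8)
The plan is to apply the direct method of the calculus of variations in the relaxed setting of quasi-open sets, the natural compactness tool being the $\gamma$-convergence of domains. I would treat the two functionals together: minimizing $J(K)$ over the inclusion $K$ amounts to minimizing the tracking energy over the Dirichlet state $u_K\in H^1_0(\Omega\setminus K)$, so that $\Omega\setminus K$ plays exactly the role that the variable quasi-open set $\Omega$ plays for the problem $\min_{\Omega\in\mathcal O}J(\Omega)$; the scheme below applies verbatim in both cases.

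First I would fix a minimizing sequence $(\Omega_n)\subset\mathcal O$ with $J(\Omega_n)\to\inf_{\Omega\in\mathcal O}J(\Omega)=:m$, which I may assume finite (any admissible competitor on which \eqref{pge} is well posed bounds $m$). Extending each state $\eta_n$ by zero outside $\Omega_n$, one has $\eta_n\in H^1_0(\mathcal D)$. The decisive ingredient is that the family of quasi-open subsets of the fixed bounded box $\mathcal D$ subject to the volume bound $|\Omega_n|\le c$ is sequentially compact for $\gamma$-convergence (Buttazzo--Dal Maso, Bucur--Buttazzo): up to a subsequence $\Omega_n\xrightarrow{\gamma}\Omega^\ast$ for some quasi-open $\Omega^\ast\subset\mathcal D$. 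The volume constraint survives in the limit because $\Omega\mapsto|\Omega|$ is lower semicontinuous for $\gamma$-convergence, so $|\Omega^\ast|\le c$ and $\Omega^\ast\in\mathcal O$.

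Next I would identify the limit state. By the very definition of $\gamma$-convergence the resolvents of the Dirichlet Laplacian on $\Omega_n$ converge strongly in $L^2(\mathcal D)$, which (together with the Rellich embedding $H^1_0(\mathcal D)\hookrightarrow L^2(\mathcal D)$) forces $\eta_n\rightharpoonup\eta^\ast$ weakly in $H^1_0(\mathcal D)$ and $\eta_n\to\eta^\ast$ strongly in $L^2(\mathcal D)$, with $\eta^\ast\in H^1_0(\Omega^\ast)$ the solution of \eqref{pge} on $\Omega^\ast$. Granted this, the conclusion is routine: the term $\int|\eta-\eta_0|^2$ is continuous for strong $L^2$-convergence, while $\int|\nabla\eta-A|^2$ is convex in $\nabla\eta$ and hence weakly lower semicontinuous in $H^1$; thus $J(\Omega^\ast)\le\liminf_n J(\Omega_n)=m$, and since $\Omega^\ast\in\mathcal O$ gives $J(\Omega^\ast)\ge m$, the set $\Omega^\ast$ is the desired minimizer.

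The genuinely delicate step — the one I expect to be the main obstacle — is the convergence of the states, because the Helmholtz operator $-\Delta-k^2 I$ is \emph{neither coercive nor monotone} once $k^2$ exceeds the first Dirichlet eigenvalue, whereas the standard $\gamma$-convergence machinery is built on the coercive resolvent $(-\Delta+I)^{-1}$. I would route through it by rewriting $\eta_n=(-\Delta_{\Omega_n}+I)^{-1}\bigl(f+(k^2+1)\eta_n\bigr)$ and passing to the limit in this fixed-point identity via strong resolvent convergence. This identification is legitimate only if $k^2$ is not a Dirichlet eigenvalue of the limit $\Omega^\ast$ — otherwise, by Theorem 2 and the Fredholm alternative, \eqref{pge} on $\Omega^\ast$ may fail to be solvable or unique — and a uniform $H^1_0$ bound on $\eta_n$ requires that $k^2$ stay bounded away from the spectra $\Sigma(\Omega_n)$. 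The honest argument therefore demands either restricting to $k^2$ below the spectrum, where $-\Delta-k^2 I$ is coercive and Lax--Milgram applies uniformly along the sequence, or establishing the spectral stability $k^2\notin\overline{\bigcup_n\Sigma(\Omega_n)}\cup\Sigma(\Omega^\ast)$ under $\gamma$-convergence; controlling this spectral continuity is the crux of the proof.
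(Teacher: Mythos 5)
Your proposal follows essentially the same route as the paper's proof: both are the direct method over $\mathcal O$, resting on the compactness of quasi-open subsets of $\mathcal D$ under the volume constraint in the sense of (weak) $\gamma$-convergence of Buttazzo--Dal Maso \cite{buttazo-dalmazo} and on the lower semicontinuity framework of Buttazzo--Shrivastava \cite{buttazo-shrivastava}. The difference lies in what is actually verified. The paper checks only the structural hypotheses on the integrand $F(x,s,z)=\lvert z-A\rvert^2+\lvert s-\eta_0\rvert^2$ (measurability, lower semicontinuity, convexity in $z$, the lower bound with $\alpha<\lambda_1(\mathcal D)$, and $F(x,0,0)\geq 0$), invokes Theorem 2.1 of \cite{buttazo-shrivastava}, and then concludes with a two-line minimizing-sequence argument; the convergence of the states $\eta_n$ is never discussed. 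You instead make the state convergence the centerpiece: extension by zero, resolvent convergence, identification of the limit state on $\Omega^{\ast}$, and the splitting of the cost into a strongly $L^2$-continuous term and a convex, hence weakly lower semicontinuous, gradient term. That is precisely the part which does not come for free here.

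The difficulty you isolate in your final paragraph is genuine, and the paper does not address it at all: the cited theorem of \cite{buttazo-shrivastava} is formulated for the coercive Dirichlet state equation $-\Delta u=f$, whereas the constraint here is the Helmholtz equation (\ref{pge}), which is neither coercive nor even well posed on every competitor $\Omega_n$ --- by the Fredholm alternative recalled in Section 2, solvability and uniqueness fail exactly when $k^2\in\Sigma(\Omega_n)$, and nothing in the definition of $\mathcal O$ excludes this. So on this point neither your argument nor the paper's is complete; yours has the merit of making the missing step explicit rather than hiding it in a citation whose hypotheses do not match the constraint. Two observations would let you close it: first, along a minimizing sequence no spectral hypothesis is needed for the a priori estimate, since $J(\Omega_n)$ itself controls $\lVert\nabla\eta_n-A\rVert_{L^2}^2$ and hence $\lVert\eta_n\rVert_{H^1_0(\mathcal D)}$; second, Dirichlet eigenvalues are continuous under $\gamma$-convergence, so the spectral stability you require reduces to the standing assumption (implicit in the paper, and really a condition on the admissible class) that $k^2$ stays uniformly away from $\Sigma(\Omega_n)$ and $k^2\notin\Sigma(\Omega^{\ast})$; without some such assumption the map $\Omega\mapsto\eta_\Omega$, and therefore $J$ itself, is not even well defined on all of $\mathcal O$.
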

 \begin{proof}Since the approach is the same in both the problems, we give the proof for the second shape optimization problem %(\ref{fonct121}) 
with the functional as follows
 $$J (\Omega)=j(\eta_\Omega)=\int_{\Omega}\lvert\nabla\eta (x)-A(x) \rvert^2dx+\int_{\Omega}\lvert\eta(x)-\eta_0(x) \rvert^2dx$$ 
 %defined by (\ref{fonct0}) 
 and $\eta_\Omega$ solution to
 \begin{equation}\label{11pge}\left\{\begin{array}{ccc}

-\Delta\eta_\Omega-k^2\eta_\Omega=f\,\,\text{in}\,\,\Omega\\[0.3cm]
\;\eta_\Omega=0\,\,\text{on}\,\,\partial\Omega.
\end{array}\right.\end{equation}
In the following, let  $F(x,s,z)= |z-A|^2 + |s-\eta_0|^2,$ then we have

$$F(x,\eta,\nabla \eta)=\lvert\nabla\eta (x)-A(x) \rvert^2+\lvert\eta(x)-\eta_0(x) \rvert^2.$$
For the proof we could use the general theory of $\gamma-$ convergence and weak $\gamma-$ convergence introduce by G. Buttazzo and G. Dalmaso \cite{buttazo-dalmazo} and G. Buttazzo and H. Shrivastava \cite{buttazo-shrivastava}.  We use directly a result of G. Buttazzo and H. Shrivastava \cite{buttazo-shrivastava}(see theorem 2.1). To apply this theorem, it suffices only to verify the following conditions of our integrand function $F:$\\
- $F(x,s,z)$ is measurable in $x$,  lower semicontinuous in $(s,z);$ and convex in $z,$\\
- there exists $c>0,\,a\in L^1(D),$ and $\alpha<\lambda_1(D)$ such that 
 $$c(\vert z\vert^2-\alpha\vert s\vert^2-a(x))\leq F(x,y,z)\,\,\text{for every}\,\,x,\,s,\,z,$$
 being $\lambda_1(D)$ the first Dirichlet eigenvalue;
 \\
 -$F(x,0,0)\geq0.$\\
 The first two conditions make it possible to obtain the semi-continuity and coercitivity of the functional.
%We therefore need a number of conditions including the lower semi-continuity of the functional functional $F(x,s,z)$ in $(s,z)$ and convexity in $z.$ In an other hand, we prove that there exist a constant $c>0$ and $\alpha<\lambda(D),$ the first eigenvalue of the Laplacian on $D;$ such that 
%$$c(\vert z\vert^2-\alpha\vert s\vert^2)\leq F(x,s,z).$$ 
At the end, we have $F(x,0,0)=\vert A\vert^2+\vert\eta_0\vert^2\geq0.$\\
Let $u_n$ a sequence such that 
 $$j(u_n)=\int_{\Omega}\lvert\nabla  u_n (x)- A(x) \rvert^2dx+\int_{\Omega}\lvert  u_n(x)-u_0(x) \rvert^2dx$$
Then, $j(u_n)=J(\Omega)>0\Longrightarrow \inf\{J(\omega),\,\,\omega\in\mathcal O\}>0.$ Let $\alpha=\inf\{J(\omega),\,\,\omega\in\mathcal O\}$, then there exists a minimizing sequence $(\Omega_n)\subset\mathcal O$ such that $J(\Omega_n)\rightarrow \alpha.$ Since $\Omega_n\in \mathcal O$ is bounded, there exists a $\Omega\in\mathcal O$ such that $\Omega_n$ $\gamma\rightarrow \Omega.$\\% set $F$
 In an other hand, 
 Because of the fact that, the functional is lower semi-continue, we have directly $J(\Omega)\leq J(\Omega_{n})$ and $J(\Omega)\leq \inf \{J(\omega),\,\,\omega\in\mathcal O\}.$ Finally we have
 $$J(\Omega)=\inf \{J(\omega),\,\,\omega\in\mathcal O\}.$$
 \end{proof}

 On an other hand, let us consider the following eigenvalue or spectral problem 
 \begin{equation}\left\{\begin{array}{ccc}-\Delta \eta_k=\lambda_k(\Omega)u_k\,\,\text{in}\,\,\Omega\\[0.3cm]
 \eta_k\in H_0^1(\Omega),\end{array}\right.
 \end{equation} for $k=1,2,\ldots,.$ We have also the following result which can be found in \cite{bucur, thesis}.
 
 \begin{theorem} Let $\mathcal D\subset\mathbb R^N,\,\,N\geq 2$ be a smooth open set in $\mathbb R^n.$ Then the shape optimization problem
 $$\min\{\lambda_k(\Omega):\,\,\Omega\subset \mathcal D,\,\,,\vert\Omega\vert\leq c\}$$
  has a solution.
 \end{theorem}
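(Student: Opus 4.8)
The plan is to run the direct method of the calculus of variations inside the framework of $\gamma$-convergence of domains, in the spirit of the existence theory of Buttazzo and Dal Maso \cite{buttazo-dalmazo}. The whole argument hinges on two structural properties of the map $\Omega\mapsto\lambda_k(\Omega)$: its monotonicity with respect to set inclusion, and its lower semicontinuity for the $\gamma$-convergence. Once both are established, the conclusion follows by compactness of the admissible class; the technical heart of the matter is precisely this compactness step, so that is where I would concentrate the effort.

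First I would recall the Courant--Fischer min-max characterization of the $k$-th Dirichlet eigenvalue,
\[
\lambda_k(\Omega)=\min_{\substack{V\subset H_0^1(\Omega)\\ \dim V=k}}\ \max_{\substack{v\in V\\ v\neq 0}}\frac{\displaystyle\int_\Omega|\nabla v|^2\,dx}{\displaystyle\int_\Omega v^2\,dx}.
\]
From this formula the monotonicity is immediate: if $\Omega_1\subset\Omega_2$, then $H_0^1(\Omega_1)\subset H_0^1(\Omega_2)$ (extending functions by zero outside $\Omega_1$), so minimising the Rayleigh quotient over a larger family of $k$-dimensional subspaces can only decrease the value; hence $\lambda_k(\Omega_1)\geq\lambda_k(\Omega_2)$, i.e. $\Omega\mapsto\lambda_k(\Omega)$ is decreasing for inclusion.

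Next I would set $m=\inf\{\lambda_k(\Omega):\Omega\subset\mathcal D\text{ quasi-open},\ |\Omega|\leq c\}$ and select a minimizing sequence $(\Omega_n)$ with $\lambda_k(\Omega_n)\to m$. The compactness step then rests on the fundamental fact (see \cite{bucur, thesis}) that the class of quasi-open subsets of the bounded set $\mathcal D$ is compact for the weak $\gamma$-convergence; up to a subsequence we thus obtain a quasi-open limit $\Omega^{*}\subset\mathcal D$. Since the volume functional $\Omega\mapsto|\Omega|$ is lower semicontinuous for this convergence, $|\Omega^{*}|\leq\liminf_n|\Omega_n|\leq c$, so $\Omega^{*}$ is admissible.

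Finally I would invoke the $\gamma$-lower semicontinuity of $\lambda_k$, which yields $\lambda_k(\Omega^{*})\leq\liminf_n\lambda_k(\Omega_n)=m$; combined with $\lambda_k(\Omega^{*})\geq m$ (because $\Omega^{*}$ is admissible), this forces $\lambda_k(\Omega^{*})=m$, so $\Omega^{*}$ is the sought minimizer. The main obstacle, as anticipated, is the combined compactness-and-semicontinuity step: a weak $\gamma$-limit is a priori only a capacitary measure rather than a genuine set, and it is exactly the decreasing character of $\lambda_k$ together with the volume bound $|\Omega|\leq c$ that prevents the relaxed minimizer from charging a set of positive capacity outside a quasi-open domain, thereby guaranteeing that $\Omega^{*}$ is a bona fide quasi-open set. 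This is the precise content of the Buttazzo--Dal Maso theorem \cite{buttazo-dalmazo}, whose hypotheses are met by the monotonicity and $\gamma$-lower semicontinuity verified above, and which therefore delivers the existence of an optimal shape.
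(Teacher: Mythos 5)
Your proposal is correct, and in substance it follows the very route the paper merely points to: the paper's own ``proof'' consists only of recalling the min--max characterization of the relative eigenvalues $\lambda_k(\Omega,\mathcal D)$ on the space $H_0^1(\Omega,\mathcal D)=\{u\in H^1(\mathcal D):\ u=0 \text{ q.e. on } \mathcal D\setminus\Omega\}$, and then defers entirely to the references \cite{bucur, thesis} for the existence statement. You instead carry out the actual argument those references contain: the Courant--Fischer formula, the monotonicity of $\lambda_k$ under inclusion, the sequential compactness of the quasi-open subsets of $\mathcal D$ for weak $\gamma$-convergence, the lower semicontinuity of the volume, and the passage from $\gamma$-lower semicontinuity plus monotonicity to lower semicontinuity along the minimizing sequence --- i.e. precisely the Buttazzo--Dal Maso theorem, whose two hypotheses you identify and (for monotonicity) verify. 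So your proof is the expanded, essentially self-contained version of what the paper outsources to a citation. One terminological slip in your last paragraph is worth correcting: it is the $\gamma$-limit (resolvent limit) of a sequence of quasi-open sets that is a priori only a capacitary measure; the \emph{weak} $\gamma$-limit, defined through convergence of the torsion functions $w_{\Omega_n}\to w$ in $L^2(\mathcal D)$ with limit set $\{w>0\}$, is by construction a genuine quasi-open set. What the monotonicity buys is therefore not that the weak $\gamma$-limit is a set, but rather that a functional which is decreasing for inclusion and $\gamma$-lsc is automatically lower semicontinuous along weakly $\gamma$-convergent sequences of sets, which is exactly the inequality $\lambda_k(\Omega^{*})\leq\liminf_n\lambda_k(\Omega_n)$ your argument needs.
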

 \begin{proof}
We have first to notice that, given a smooth bounded set $\mathcal D\subset\mathbb R^n$ and a quasi-open set $\Omega\subset D.$ The relative eigenvalues $\lambda_k(\Omega,\mathcal D)$ are variationally characterized as
$$\lambda_k(\Omega,\mathcal D)=\min_{S_k\subset H_0^1(\Omega,\mathcal D)}\max_{u\in S_k\backslash \{0\}} \frac{\int_{\Omega}\vert\nabla u\vert^2dx}{\int_\Omega u^2dx}   $$
where the minimum is over the $k$-dimensional subspaces $S_k$ of $H_0^1(\Omega,\mathcal D)$
and the Sobolev space
$H_0^1(\Omega,\mathcal D)$ is defined as
$$H_0^1(\Omega,\mathcal D)=\{u\in H^1(\mathcal D):\,\, u=0\,\,\text{ q. e}\,\,D\backslash\Omega\},$$
where we use the term quasi-everywhere in sense of the $H^1(\mathbb R^n)$-capacity.
 \end{proof}
 
 \section{Shape derivatives of shape functionals}
 Let $\Omega$ be an open set of $\mathbb R^N$ of $\mathcal C^2.$ 
 Consider a class of admissible sets $\mathcal O$ in $\mathbb R^N$ which is stable with respect to a familly of diffeomorphims $\phi_t$ defined by $\phi_t(\cdot)= (I+tV(\cdot))(\Omega)$  and $V: \mathbb R^N\rightarrow\mathbb R^N$ smooth vector fields.
% $\Omega\in \mathcal O.$  
 Let $\Omega_t$ the perturbed domain defined by 
 $\Omega_t=\phi_t(\Omega)$ where $\phi_t(x)=x+tV(x).$ The function $\phi$ satisfies the following hypothesis:
\begin{eqnarray}\label{fi}
\phi :t\in \rightarrow W^{1,\infty}(\mathbb{R}^N)\,\,\text{differentiable in}\,\,  0\,\,\text{with}\,\,\phi(0)=Id\,\,\text{and}\,\,\;\;\phi (0)=V.
\end{eqnarray}
Let us consider a shape function $J:\mathcal O\rightarrow\mathbb R$, we define the Eulerian derivative as
\begin{equation} 
DJ(\Omega, V)=\lim_{t\rightarrow 0^+}\frac{J(\Omega_t)-J(\Omega)}{t}.
\end{equation}
Commonly, this expression is called the shape derivative of the shape functional $J$ at $\Omega\in\mathcal O$ in the direction $V$ whenever the limits exists. Concerning the shape derivative, we refer to \cite{Delfour, Henrot, Simon, Sokolowski} and references therein.
We present now the main results concerning the derivative of the functionals considered. In functional (\ref{fonct0}) and  (\ref{fonct1210}), $\eta$ is solution to (\ref{pge}) and $u=\eta_K$ is solution to (\ref{pge1}). For the problems considered the following two cases will be considered:  $k^2 $ in the constraint Helmothtz equation is not an eigenvalue and  $ k^2 $ is an eigenvalue.
 \subsection{Shape derivative with  Helmholtz equation}
 The computation of the shape derivative of the shape functional depends strongly on the boundary condition of the domain. The same problem is considered but the only difference is located on the boundary condition of the constraint equation which we replace by an homogeneous Neumann condition.\\
 If $k ^ 2$ is not an eigenvalue, the calculation of the shape derivative is carried out quite easily by learning on the Hadamard's formulas \cite {Hadamard, Henrot}. The results are given in the following theorem.
 \begin{theorem}\label{nonpropre}
Let $\Omega\subset \mathbb{R}^N$, $N\geq 1$ be an open set of class $\mathcal{C}^2$ and $\Omega_t=\phi(t)\Omega$ where $\phi$ is given in (\ref{fi}). If $k^2\notin\Sigma$ then, the functional $J$ defined by (\ref{fonct0})  is differentiable at $t=0$ and we have
\begin{eqnarray*}
DJ(\Omega;V)_{t=0}=j'(0)=\int_{\partial\Omega}\left(\frac{\partial \eta}{\partial n}\frac{\partial p}{\partial n}-2(\frac{\partial \eta}{\partial n})^2+ \lvert\nabla\eta-A \rvert^2+\lvert\eta-\eta_0 \rvert^2\right)V\cdot nd\sigma
\end{eqnarray*}
where $p$ is the adjoint state%ésigne la solution du problème
\begin{equation*}
\begin{cases}
-\Delta p-k^2p=2(\eta-\eta_0)-2\Delta\eta+2\nabla\cdot A\,\,\text{in}\,\,\Omega\\[0.3cm]
p=0\,\,\text{on}\,\,\partial\Omega.
\end{cases}
\end{equation*}
and $\eta$ solution to (\ref{pge}) with Dirichlet condition on $\partial\Omega$ and

\begin{eqnarray*}
DJ(\Omega;V)_{t=0}=j'(0)=\int_{\partial\Omega}\left(-\frac{\partial^2 \eta}{\partial n^2}\right)V\cdot np+\nabla\eta\cdot\nabla_\Gamma(V\cdot n)pd\sigma+\int_{\partial\Omega}\left( \lvert\nabla\eta-A \rvert^2+\lvert\eta-\eta_0 \rvert^2\right)V\cdot nd\sigma
\end{eqnarray*}
where $\nabla_\Gamma$ denotes the tangential gradient and $p$ is the adjoint state solution to
\begin{equation*}
\begin{cases}
-\Delta p-k^2p=2(\eta-\eta_0)-2\Delta\eta+2\nabla\cdot A\,\,\text{in}\,\,\Omega\\[0.3cm]
\frac{\partial p}{\partial n}=0\,\,\text{on}\,\,\partial\Omega
\end{cases}
\end{equation*}
and $\eta$ solution to (\ref{pge}) with Neumann condition on $\partial\Omega$.
\end{theorem}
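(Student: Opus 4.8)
The plan is to handle the PDE constraint by the adjoint-state (Lagrangian) method combined with Hadamard's boundary-variation formula. The first step is to secure the shape-differentiability of the state. Because $k^2\notin\Sigma$, the existence and uniqueness results recalled above make (\ref{pge}) uniquely solvable with a bounded solution operator; pulling the problem on $\Omega_t=\phi(t)\Omega$ back to the fixed domain $\Omega$ through the change of variables $\phi(t)=I+tV$ turns the $t$-dependence into a smooth (in $t$) perturbation of a coercive-modulo-compact bilinear form, and an implicit-function argument then yields the material derivative $\dot\eta$ and the local shape derivative $\eta'=\dot\eta-\nabla\eta\cdot V$. Differentiating the interior equation (the datum $f$ being fixed, so $f'=0$) shows that $\eta'$ solves the homogeneous Helmholtz equation $-\Delta\eta'-k^2\eta'=0$ in $\Omega$, and differentiating the boundary condition of (\ref{pge}) supplies the boundary data for $\eta'$.

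The second step is to differentiate $J$. Writing $J(\Omega_t)=\int_{\Omega_t}F(x,\eta_t,\nabla\eta_t)\,dx$ with $F(x,s,z)=|z-A|^2+|s-\eta_0|^2$ and applying the transport formula for domain integrals gives
\[
DJ(\Omega;V)=\int_\Omega\big(2(\nabla\eta-A)\cdot\nabla\eta'+2(\eta-\eta_0)\eta'\big)\,dx+\int_{\partial\Omega}\big(|\nabla\eta-A|^2+|\eta-\eta_0|^2\big)\,V\cdot n\,d\sigma .
\]
The surface integral is the explicit Hadamard term that already appears in both announced formulas; the volume integral still carries the unknown $\eta'$, and removing it is the heart of the computation.

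The third step introduces the adjoint. Integrating $2\nabla\eta\cdot\nabla\eta'$ by parts and moving the divergence off $A$ converts the volume integral into $\int_\Omega\big(2(\eta-\eta_0)-2\Delta\eta+2\nabla\cdot A\big)\eta'\,dx$ plus boundary terms; the bracket is exactly the source defining the adjoint state $p$, so the volume part equals $\int_\Omega(-\Delta p-k^2 p)\,\eta'\,dx$. Green's second identity, together with the fact that $\eta'$ solves the homogeneous equation, collapses this to the boundary integral $\int_{\partial\Omega}\big(p\,\partial_n\eta'-\eta'\,\partial_n p\big)\,d\sigma$, so that $DJ(\Omega;V)$ becomes a pure boundary functional.

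It remains to insert the boundary data, which is where the two cases split and where the main difficulty lies. In the Dirichlet case $p=0$ eliminates the $\partial_n\eta'$ term, while $\eta'=-\partial_n\eta\,(V\cdot n)$ rewrites the rest; using additionally that $\eta=0$ on $\partial\Omega$ forces $\nabla\eta=(\partial_n\eta)\,n$ there, one collects the boundary contributions into the announced integrand $\partial_n\eta\,\partial_n p-2(\partial_n\eta)^2+|\nabla\eta-A|^2+|\eta-\eta_0|^2$. The Neumann case is the genuine obstacle: here $\partial_n p=0$ leaves $\int_{\partial\Omega}p\,\partial_n\eta'\,d\sigma$, and one must compute the boundary condition satisfied by $\partial_n\eta'$. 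Differentiating a Neumann condition under domain perturbation requires tangential calculus and brings in second-order normal derivatives and curvature terms; carried out carefully it expresses $\partial_n\eta'$ through $\partial_n^2\eta\,(V\cdot n)$ and $\nabla\eta\cdot\nabla_\Gamma(V\cdot n)$, which upon substitution reproduces the two terms $-\partial_n^2\eta\,(V\cdot n)\,p$ and $\nabla\eta\cdot\nabla_\Gamma(V\cdot n)\,p$ of the second formula. I expect the $\mathcal{C}^2$-regularity of $\Omega$ to be precisely what legitimizes this tangential computation.
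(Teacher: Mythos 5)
Your proposal follows essentially the same route as the paper's own proof: Hadamard's transport formula splitting $DJ$ into a volume term in $\eta'$ plus the explicit boundary term, the characterization of $\eta'$ as the solution of the homogeneous Helmholtz equation with boundary data $\eta'=-\partial_n\eta\,(V\cdot n)$ (Dirichlet) or $\partial_n\eta'=-\partial_n^2\eta\,(V\cdot n)+\nabla\eta\cdot\nabla_\Gamma(V\cdot n)$ (Neumann), the same adjoint state $p$, and Green's identity to reduce everything to boundary integrals. The only differences are presentational — you justify the shape-differentiability of the state via pullback and the implicit function theorem where the paper simply invokes Hadamard's formulas, and you are in fact slightly more careful than the paper in acknowledging the boundary terms produced when the divergence is moved off $A$ — so the proposal is correct in the same sense and to the same degree as the paper's argument.
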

\begin{proof}
Using Hadamard's formula, the shape derivative of the functional (\ref{fonct0})  at $t=0$ is given by
\begin{eqnarray}\label{d1}
j'(0)=\int_{\Omega}2\eta'(\eta-\eta_0)+2\nabla\eta'(\nabla\eta-A) dx+\int_{\partial\Omega} (\lvert\nabla\eta-A \rvert^2+\lvert\eta-\eta_0 \rvert^2)n\cdot Vd\sigma
\end{eqnarray}
where $\eta '$, the shape derivative for $\eta_t$ at $t=0$ is the unique solution to the following problem
\begin{equation*}\label{eta1}
\begin{cases}
-\Delta\eta'-k^2\eta'=0\,\,\text{in}\,\,\Omega\\[0.4cm] \eta'=-\frac{\partial \eta}{\partial n}(V\cdot n)\,\,\text{on}\,\,\partial\Omega.
\end{cases}
\end{equation*}
By introducing the adjoint state $p$ of (\ref{pge}) relative to the functional (\ref{fonct0}), we get
\begin{equation}\label{Addirichlet}
\begin{cases}
-\Delta p-k^2p=2(\eta-\eta_0)-2\Delta\eta+2\nabla\cdot A\,\,\text{in}\,\,\Omega\\[0.3cm] p=0\,\,\text{on}\,\,\partial\Omega.
\end{cases}
\end{equation}
Multiplying (\ref{Addirichlet}) by $\eta '$ and integrating over $\Omega$ we get
\begin{eqnarray*}
-\int_{\Omega}\eta ' \Delta pdx-\int_{\Omega}k^2p\eta ' dx=\int_{\Omega}2\eta '(\eta-\eta_0)dx-\int_{\Omega}2\eta ' \Delta\eta dx+2\int_{\Omega}\eta ' \nabla\cdot Adx
\end{eqnarray*}
Using Green's formula and equation (\ref{pge}), we have
\begin{eqnarray*}
\int_{\Omega}2\eta '(\eta-\eta_0)dx+\int_\Omega 2\nabla\eta\nabla\eta'dx-2\int_{\Omega}\nabla\eta ' \cdot Adx=-\int_{\partial\Omega}\eta' \frac{\partial p}{\partial n}d\sigma+\int_{\partial\Omega}2\eta ' \frac{\partial \eta}{\partial n}d\sigma
\end{eqnarray*}
Replacing this expression in (\ref{d1}) and the boundary condition of $\eta'$ we get finally%on 
\begin{eqnarray*}
j'(0)=\int_{\partial\Omega}\left(\frac{\partial \eta}{\partial n}\frac{\partial p}{\partial n}-2(\frac{\partial \eta}{\partial n})^2+ \lvert\nabla\eta-A \rvert^2+\lvert\eta-\eta_0 \rvert^2\right)V\cdot nd\sigma
\end{eqnarray*}
giving the desired result.\\
In the case of Neumann boundary conditions, the only difference is the way to derive $\eta_t$ on the boundary. So $\eta'$ is solution  to the following boundary value problem
\begin{equation}\label{formderive}
\begin{cases}
-\Delta\eta'- k^2\eta'=0\,\,\text{in}\,\,\Omega\\[0.3cm] \frac{\partial \eta'}{\partial n}=\left(-\frac{\partial^2 \eta}{\partial n^2}\right) V\cdot n+\nabla \eta\cdot\nabla_\Gamma(V\cdot n)\,\,\text{on}\,\,\partial\Omega,  
\end{cases}
\end{equation}
The adjoint state $p$ is also given by
\begin{equation}\label{adjoint2}
\begin{cases}
-\Delta p-k^2p=2(\eta-\eta_0)-2\Delta\eta+2\nabla\cdot A\,\,\text{in}\,\,\Omega\\[0.3cm] \frac{\partial p}{\partial n} =0\,\,\text{on}\,\,\partial\Omega.  
\end{cases}
\end{equation}
Multiplying (\ref{adjoint2})  by $\eta'$ and integrating over $\Omega,$ we get% on obtient la relation
\begin{eqnarray*}
-\int_{\Omega}\eta ' \Delta pdx-\int_{\Omega}k^2p\eta ' dx=\int_{\Omega}2\eta '(\eta-\eta_0)dx-\int_{\Omega}2\eta ' \Delta\eta dx+2\int_{\Omega}\eta ' \nabla\cdot Adx.
\end{eqnarray*}
which, using (\ref{formderive}) and the homogeneous boundary condition of $\frac{\partial\eta}{\partial n}=\frac{\partial p}{\partial n}=0$ we get
\begin{eqnarray*}
j'(0)=\int_{\partial\Omega} \left(-\frac{\partial^2 \eta}{\partial n^2}\right) V\cdot np+\nabla \eta\cdot\nabla_\Gamma(V\cdot n) pd\sigma+\int_{\partial\Omega} (\lvert\nabla\eta-A \rvert^2+\lvert\eta-\eta_0 \rvert^2)n\cdot Vd\sigma,
\end{eqnarray*}
giving the desired result.
\end{proof}
\subsection{Shape derivative with $k^2$ is an eigenvalue}

In this case, we consider the same problem but the only difference is the fact that $k^2=k^2(\Omega)$ in the constraint equation (\ref{pge}) or (\ref{pge1}) is an eigenvalue.  What requires to calculate first the shape derivative of the eigenvalue 
$\lambda(\Omega_t)$ in the case of Dirichlet and Neumann condition at $t=0.$ The calculation of the shape derivatives of the first eigenvalue has been proposed by several authors including Henrot and Pierre \cite{Henrot}, \cite{He1},\cite{He2}, Caubet  et al. \cite{Caubet} and references therein. In the following, we give some results related to the shape derivative in the case of a simple eigenvalue. Later we will give the shape derivative of a multiple eigenvalue.
\begin{theorem}\label{proprsimpl}
Let $\Omega$ be $\mathcal C^2$ domain and $\Omega_t$ defined as follows. Consider the following  problems
\begin{equation}\label{Dirichlet1} \left\{\begin{array}{ccc}
-\Delta \eta_t-k^2(\Omega_t)\eta_t=0\,\,\text{in}\,\,\Omega_t\\[0.3cm]
\eta_t=0\,\,\text{on}\,\,\partial\Omega_t\end{array}\right.\end{equation} and
\begin{equation}\label{Neumann1}\left\{\begin{array}{ccc}
-\Delta \eta_t-k^2(\Omega_t)\eta_t=0\,\,\text{in}\,\,\Omega_t\\[0.3cm]
\frac{\partial\eta_t}{\partial n}=0\,\,\text{on}\,\,\partial\Omega_t.\end{array}\right.\end{equation} 
We suppose that $k^2$ is a simple eigenvalue.\\
Then, $t\rightarrow k^2(\Omega_t)$ is differentiable and the shape derivative  with respect to the vector fields $V$ is given by:
\begin{eqnarray}\label{val1}
(k^2)'(0)=-\int_{\partial\Omega}\left(\frac{\partial\eta}{\partial n}\right)^2 V\cdot n\,d\sigma
\end{eqnarray}
if $\eta$ is solution to (\ref{Dirichlet1}) at $t=0$ and 
\begin{eqnarray}\label{val2}
(k^2)'(0)=\int_{\partial\Omega}\left(\lvert\nabla\eta\rvert^2-k^2\eta^2\right)(V\cdot n)d\sigma
\end{eqnarray}
if $\eta$ is solution to (\ref{Neumann1}) at $t=0$.
\end{theorem}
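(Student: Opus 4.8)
The plan is to handle both boundary conditions through one computation built on the Rayleigh-quotient representation of the eigenvalue, the hypothesis that $k^2$ is \emph{simple} being exactly what provides genuine (two-sided) differentiability of $t\mapsto k^2(\Omega_t)$ rather than mere directional Lipschitz behaviour. First I would transport the problem to the fixed domain by setting $\tilde\eta_t=\eta_t\circ\phi(t)$ and invoke the classical perturbation theory for simple eigenvalues (equivalently, the implicit function theorem applied to the transported weak formulation, as in Henrot--Pierre \cite{Henrot}): since the spectral projection onto a simple eigenvalue depends smoothly on the coefficients, both $t\mapsto k^2(\Omega_t)$ and the normalised eigenfunction are differentiable at $t=0$, and the shape derivative $\eta'$ is well defined. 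I would fix the normalisation $\int_{\Omega_t}\eta_t^2\,dx=1$ once and for all, so that $k^2(\Omega_t)=\int_{\Omega_t}\lvert\nabla\eta_t\rvert^2\,dx$.

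The core of the argument is then to differentiate this representation together with the normalisation constraint using Hadamard's formula for a domain integral, $\frac{d}{dt}\int_{\Omega_t}f_t\,dx\big|_{0}=\int_\Omega f'\,dx+\int_{\partial\Omega}f\,(V\cdot n)\,d\sigma$, where $f'$ is the shape derivative of the integrand. Applied to $f_t=\eta_t^2$ this gives the identity $2\int_\Omega\eta\eta'\,dx=-\int_{\partial\Omega}\eta^2(V\cdot n)\,d\sigma$, while applied to $f_t=\lvert\nabla\eta_t\rvert^2$ it yields $(k^2)'(0)=2\int_\Omega\nabla\eta\cdot\nabla\eta'\,dx+\int_{\partial\Omega}\lvert\nabla\eta\rvert^2(V\cdot n)\,d\sigma$. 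Integrating the bulk term by parts and inserting the eigenvalue equation $-\Delta\eta=k^2\eta$ reduces it to $2k^2\int_\Omega\eta\eta'\,dx+2\int_{\partial\Omega}\frac{\partial\eta}{\partial n}\eta'\,d\sigma$, after which the volume contribution is eliminated by the normalisation identity above.

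At this point the two cases diverge only through the boundary behaviour of $\eta$ and $\eta'$, and notably the Neumann case never requires the more delicate shape-derivative boundary condition (\ref{formderive}). In the Dirichlet case I would use $\eta=0$ on $\partial\Omega$ (so the normalisation identity forces $\int_\Omega\eta\eta'\,dx=0$ and the surface integrand simplifies via $\lvert\nabla\eta\rvert^2=(\frac{\partial\eta}{\partial n})^2$) together with the transported condition $\eta'=-\frac{\partial\eta}{\partial n}(V\cdot n)$; collecting the surviving surface terms produces exactly $(k^2)'(0)=-\int_{\partial\Omega}(\frac{\partial\eta}{\partial n})^2(V\cdot n)\,d\sigma$. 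In the Neumann case I would instead use $\frac{\partial\eta}{\partial n}=0$ on $\partial\Omega$, which annihilates the boundary term $2\int_{\partial\Omega}\frac{\partial\eta}{\partial n}\eta'\,d\sigma$ directly; the remaining volume contribution collapses through the normalisation identity to $-k^2\int_{\partial\Omega}\eta^2(V\cdot n)\,d\sigma$, and adding $\int_{\partial\Omega}\lvert\nabla\eta\rvert^2(V\cdot n)\,d\sigma$ gives $(k^2)'(0)=\int_{\partial\Omega}(\lvert\nabla\eta\rvert^2-k^2\eta^2)(V\cdot n)\,d\sigma$.

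The step I expect to be the main obstacle is the rigorous justification of differentiability and of Hadamard's formula rather than the algebra that follows. One must know a priori that the simple eigenfunction is regular enough for the normal trace and surface integrals to make sense---here $\Omega\in\mathcal{C}^2$ yields $\eta\in H^2(\Omega)$---and one must control the transported eigenfunction $\tilde\eta_t$ in $H^1(\Omega)$ uniformly in $t$ so that the difference quotients converge and identify $\eta'$. Simplicity is indispensable throughout: it is what permits speaking of a single differentiable branch $k^2(\Omega_t)$ and a well-defined $\eta'$, and for a multiple eigenvalue this structure fails, which is precisely why the paper postpones that case to a separate directional analysis.
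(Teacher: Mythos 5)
Your proposal is correct and follows essentially the same route as the paper: both arguments rest on the normalized Rayleigh-quotient identity $k^2(t)=\int_{\Omega_t}\lvert\nabla\eta_t\rvert^2\,dx$, Hadamard differentiation of this identity and of the normalization constraint, followed by an integration by parts that is closed using the eigenvalue equation and the boundary data ($\eta'=-\frac{\partial\eta}{\partial n}(V\cdot n)$ in the Dirichlet case, $\frac{\partial\eta}{\partial n}=0$ in the Neumann case). The only differences are cosmetic: you work out the Dirichlet case explicitly where the paper dismisses it as ``easier,'' and you make explicit the perturbation-theoretic justification of differentiability that the paper merely asserts by reference to earlier results.
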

\begin{proof} We only give the proof in the case of Neumann condition. The DIrichlet one is easier\\
We have already proven that $t\rightarrow \eta_t$ is differentiable at $t=0$ and the shape derivation is solution to
\begin{equation}\label{33}
\begin{cases}
-\Delta\eta ' =(k^2) ' \eta +k^2 \eta ' \,\,\text{in}\,\,\Omega \\[0.3cm]  \frac{\partial \eta'}{\partial n}=\left(-\frac{\partial^2 \eta}{\partial n^2}\right) V\cdot n+\nabla \eta\cdot\nabla_\Gamma(V\cdot n)\,\,\text{on}\,\,\partial\Omega.
\end{cases}
\end{equation}
From the normalization relation, we get 
\begin{equation}\label{34}
\int_{\partial\Omega}\eta^2(V\cdot n)d\sigma+2\int_\Omega \eta\eta'dx=0.
\end{equation}
From (\ref{33}) and (\ref{34}), we get finally
\begin{equation}\label{35}
\begin{cases}
-\Delta\eta ' =(k^2) ' \eta +k^2 \eta ' \,\,\text{in}\,\,\Omega \\[0.5cm]  \frac{\partial \eta'}{\partial n}=\left(-\frac{\partial^2 \eta}{\partial n^2}\right) V\cdot n+\nabla \eta\cdot\nabla_\Gamma(V\cdot n)\,\,\text{on}\,\,\partial\Omega \\[0.5cm]  \int_{\partial\Omega}\eta^2(V\cdot n)d\sigma+2\int_\Omega \eta\eta'dx=0.
\end{cases}
\end{equation}
On the other hand, multiplying the first equation of (\ref{Neumann1}) by $\eta_t$ and integrating over $\Omega$ we get
\begin{eqnarray*}
k^2(t)=\int_{\Omega_t}\lvert\nabla \eta_t\rvert^2dx,
\end{eqnarray*}
whose derivative  yields 
\begin{eqnarray*}
(k^2)'=\int_\Omega 2\nabla \eta\nabla\eta'dx+\int_{\partial\Omega}\lvert \nabla \eta\rvert^2(V\cdot n)d\sigma
\end{eqnarray*}
Integrating by parts and using Green formula in the first term after the preceding equality
\begin{eqnarray*}
(k^2)'(0)=\int_{\partial\Omega}\left(\lvert\nabla\eta\rvert^2-k^2\eta^2\right)(V\cdot n)d\sigma.
\end{eqnarray*}
\end{proof}

\begin{theorem}
Let $\Omega\subset \mathbb{R}^N$  be an open set of class $\mathcal{C}^2$ and $\Omega_t=\phi(t)\Omega$ where is defined in (\ref{fi}).  $\eta$ is solution to (\ref{pge}) with $k^2$ a simple eigenvalue. Then, the functional  $J$defined by (\ref{fonct0}) is differentiable and we have:\\
in the case of Dirichlet condition on $\partial\Omega$
\begin{eqnarray*}
j'(0)=\int_\Omega (k^2)'\eta pdx+\int_{\partial\Omega}\left(-\frac{\partial \eta}{\partial n}\frac{\partial p}{\partial n}-2(\frac{\partial \eta}{\partial n})^2+ \lvert\nabla\eta-A \rvert^2+\lvert\eta-\eta_0 \rvert^2\right)V\cdot nd\sigma
\end{eqnarray*}
where $p$ is solution to the adjoint state
\begin{equation*}
\begin{cases}
-\Delta p-k^2p=2(\eta-\eta_0)-2\Delta\eta+2\nabla\cdot A\,\,\text{in}\,\,\Omega\\[0.3cm]
\frac{\partial p}{\partial n} =0\,\,\text{on}\,\,\partial\Omega
\end{cases}
\end{equation*}
where $(k^2)'$ given by (\ref{val1}) and in the case of Neumann condition on $\partial\Omega$
\begin{eqnarray*}
j'(0)=\int_\Omega(k^2)'\eta p dx+\int_{\partial\Omega} \left(-\frac{\partial^2 \eta}{\partial n^2}\right) V\cdot np+\nabla \eta\cdot\nabla_\Gamma(V\cdot n) pd\sigma+\int_{\partial\Omega} (\lvert\nabla\eta-A \rvert^2+\lvert\eta-\eta_0 \rvert^2)n\cdot Vd\sigma.
\end{eqnarray*}
where $p$ is the solution to
\begin{equation*}
\begin{cases}
-\Delta p-k^2p=2(\eta-\eta_0)-2\Delta\eta+2\nabla\cdot A\,\,\text{in}\,\,\Omega\\[0.3cm]
\frac{\partial p}{\partial n} =0\,\,\text{on}\,\,\partial\Omega
\end{cases}
\end{equation*}
and $(k^2)'$ given by (\ref{val2}).

\end{theorem}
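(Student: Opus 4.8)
The plan is to reproduce the computation of Theorem \ref{nonpropre} while carefully accounting for the new feature that the wave number $k^2=k^2(\Omega_t)$ is now an eigenvalue varying with the moving domain $\Omega_t$. First I would write Hadamard's formula for the Eulerian derivative of $J$, namely
\[
j'(0)=\int_\Omega\left[2\eta'(\eta-\eta_0)+2\nabla\eta'\cdot(\nabla\eta-A)\right]dx+\int_{\partial\Omega}\left(|\nabla\eta-A|^2+|\eta-\eta_0|^2\right)V\cdot n\,d\sigma,
\]
where $\eta'$ is the shape derivative of $t\mapsto\eta_t$ at $t=0$. The surface integral is identical to the one appearing in the non-eigenvalue case of Theorem \ref{nonpropre}, so all the genuinely new terms come from the volume integral and from the equation satisfied by $\eta'$.

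The decisive change is the equation for $\eta'$. Differentiating the constraint $-\Delta\eta_t-k^2(\Omega_t)\eta_t=0$ in $t$ and setting $t=0$ gives the inhomogeneous relation $-\Delta\eta'-k^2\eta'=(k^2)'\eta$ in $\Omega$, where the scalar $(k^2)'$ is precisely the eigenvalue derivative already computed in Theorem \ref{proprsimpl}. It is this source term $(k^2)'\eta$, absent in Theorem \ref{nonpropre}, that will produce the extra volume contribution $\int_\Omega(k^2)'\eta p\,dx$ in the final expression. The trace of $\eta'$ on $\partial\Omega$ is also taken from Theorem \ref{proprsimpl}: $\eta'=-\frac{\partial\eta}{\partial n}(V\cdot n)$ in the Dirichlet case, and $\frac{\partial\eta'}{\partial n}=-\frac{\partial^2\eta}{\partial n^2}(V\cdot n)+\nabla\eta\cdot\nabla_\Gamma(V\cdot n)$ in the Neumann case.

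Then I would introduce the adjoint state $p$ solving $-\Delta p-k^2p=2(\eta-\eta_0)-2\Delta\eta+2\nabla\cdot A$, and use it to eliminate $\eta'$ from the volume term. Integrating the gradient part of that term by parts recasts it as $\int_\Omega\eta'\left[2(\eta-\eta_0)-2\Delta\eta+2\nabla\cdot A\right]dx$ plus a boundary integral; the bracket is exactly the right-hand side of the adjoint equation, so the volume term equals $\int_\Omega\eta'(-\Delta p-k^2p)\,dx$. Two applications of Green's formula, together with $-\Delta\eta'-k^2\eta'=(k^2)'\eta$, convert this into $(k^2)'\int_\Omega\eta p\,dx$ plus boundary integrals in the traces of $\eta'$, $p$, $\frac{\partial\eta'}{\partial n}$ and $\frac{\partial p}{\partial n}$. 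The boundary condition on $p$ is chosen precisely so that the term carrying the \emph{unknown} trace of $\eta'$ disappears: a homogeneous Dirichlet condition on $p$ when $\eta$ is Dirichlet (killing $\int_{\partial\Omega}p\frac{\partial\eta'}{\partial n}\,d\sigma$), and a homogeneous Neumann condition on $p$ when $\eta$ is Neumann (killing $\int_{\partial\Omega}\eta'\frac{\partial p}{\partial n}\,d\sigma$). Substituting the remaining trace of $\eta'$ and, finally, the value of $(k^2)'$ from \eqref{val1} for Dirichlet and \eqref{val2} for Neumann, collapses the boundary integrals to the claimed surface densities.

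The main obstacle is twofold. Analytically, the entire scheme presupposes that $t\mapsto k^2(\Omega_t)$ and $t\mapsto\eta_t$ are differentiable at $t=0$, and this is exactly where the hypothesis of a \emph{simple} eigenvalue is essential: for a multiple eigenvalue the branch is only directionally differentiable, the clean identity $-\Delta\eta'-k^2\eta'=(k^2)'\eta$ fails, and the multiple-eigenvalue case has to be handled separately. Computationally, the delicate step is the boundary bookkeeping after the two Green identities: one must combine the appropriate trace of $\eta'$, the adjoint boundary condition, and the fact that on a Dirichlet boundary $\nabla\eta=\frac{\partial\eta}{\partial n}\,n$, so that every surviving boundary term factors through $V\cdot n$ and regroups into the single integrand stated in the theorem.
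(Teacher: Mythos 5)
Your proposal is correct and follows essentially the same route as the paper: the paper's entire proof is a one-line remark that one should redo the computation of Theorem \ref{nonpropre} while inserting the eigenvalue derivative $(k^2)'$ of Theorem \ref{proprsimpl}, and that is exactly what you carry out, with the source term $(k^2)'\eta$ in the equation for $\eta'$ producing the extra volume contribution $\int_\Omega (k^2)'\eta\, p\,dx$ after the adjoint/Green's-formula step. Your bookkeeping is in fact more careful than the theorem statement itself, which (apparently by a typo) prescribes the Neumann condition $\frac{\partial p}{\partial n}=0$ for the adjoint even in the Dirichlet case, whereas your choice $p=0$ on $\partial\Omega$ is the one that eliminates the unknown trace $\int_{\partial\Omega} p\,\frac{\partial \eta'}{\partial n}\,d\sigma$ and is consistent with Theorem \ref{nonpropre}.
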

The proof of this theorem is essentially based on those of Theorems \ref{nonpropre} and \ref{proprsimpl}. It suffices simply to consider, in the calculation of the shape derivative of the functional, the shape derivative of the eigenvalue $k^2$ given in (\ref{val1}) or (\ref{val2}). \\  Let us now come to the shape derivative of a multiple eigenvalue is given in the following. In the case of a Dirichlet condition, the result is already given by A. Berger \cite{Amandine}. It is  also done in the paper written by B. Rousselet \cite{Rousselet}.

\begin{theorem}\textnormal{(Dirichlet condition)}\label{thm}\\
Let $\Omega$  be an open set of class $C^2.$ Let  $\lambda_k(\Omega)$ be an eigenvalue of multiplicity $p>2.$ Denotes $u_{k_1},u_{k_2},...,u_{k_p}$  an orthonormal family  of vectors associated with $\lambda_k$ for the scalar product in  $L^2$. Then $t\mapsto\lambda_k(\Omega_t)$ has a directional derivative at $t = 0$ which is one of the eigenvalues of the matrix $p\times p $ defined by 
\begin{eqnarray}
\mathcal{M}=(m_{i,j})\;\;avec\;\;m_{i,j} =\int_{\partial\Omega}\left(\frac{\partial \eta_{k_i}}{\partial n}\frac{\partial \eta_{k_j}}{\partial n}\right) V\cdot nds\;\;i,j=1,...,p
\end{eqnarray}
where $\frac{\partial \eta_{k_i}}{\partial n}$ is the normal derivative of the est la dérivée normale de la $k_i$-th proper function $\eta_{k_i}$ and $V\cdot n$  is normal displacement of the border induced by the fields $V$.
\end{theorem}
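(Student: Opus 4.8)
The plan is to transport the moving-domain eigenvalue problem to the fixed reference domain and then apply the perturbation theory of self-adjoint operators at a multiple eigenvalue. First I would pull back problem \eqref{Dirichlet1} from $\Omega_t$ to $\Omega$ through the diffeomorphism $\phi_t = I + tV$. Setting $v_t = \eta_t \circ \phi_t \in H_0^1(\Omega)$ and using the change of variables $y = \phi_t(x)$ in the weak formulation, the problem becomes a generalized eigenvalue problem posed on the fixed Hilbert space: find $\lambda(t)$ and $v_t$ with $a_t(v_t,w) = \lambda(t)\,b_t(v_t,w)$ for all $w\in H_0^1(\Omega)$, where
$$a_t(v,w) = \int_\Omega A_t\,\nabla v\cdot\nabla w\,dx, \qquad b_t(v,w) = \int_\Omega J_t\, v\,w\,dx,$$
with $J_t = \det(D\phi_t)$ and $A_t = J_t\,(D\phi_t)^{-1}(D\phi_t)^{-T}$. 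Both forms are real-analytic in $t$ on the fixed space for small $t$, and at $t=0$ they coincide with the Dirichlet form and the $L^2$ inner product, for which $\lambda_k$ has multiplicity $p$ with $L^2$-orthonormal eigenbasis $\eta_{k_1},\dots,\eta_{k_p}$.

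Next I would invoke the perturbation theory for multiple eigenvalues of holomorphic self-adjoint families (Kato, and the treatments used by Berger \cite{Amandine} and Rousselet \cite{Rousselet}). Since $\lambda_k$ has multiplicity $p$, the perturbed eigenvalue generally splits into several branches and is in general only directionally differentiable; the structural fact I would rely on is that the directional derivatives at $t=0^+$ of the branches issuing from $\lambda_k$ are exactly the eigenvalues of the symmetric $p\times p$ matrix obtained by restricting the first-order form $\dot a - \lambda_k\,\dot b$ to the eigenspace $E=\mathrm{span}(\eta_{k_1},\dots,\eta_{k_p})$ in the orthonormal basis. Differentiating $a_t(v_t,w)=\lambda(t)b_t(v_t,w)$ along a branch and testing with $w\in E$ makes the terms $a_0(\dot v,w)=\lambda_k b_0(\dot v,w)$ cancel, leaving $\dot a(\eta_{k_i},\eta_{k_j}) - \lambda_k\,\dot b(\eta_{k_i},\eta_{k_j}) = \mu\,\delta_{ij}$, which identifies the derivative $\mu$ as an eigenvalue of $M_{ij}=\dot a(\eta_{k_i},\eta_{k_j}) - \lambda_k\,\dot b(\eta_{k_i},\eta_{k_j})$.

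Then I would compute the coefficient derivatives at $t=0$: from $D\phi_t = I + t\,DV$ one obtains $\dot J = \mathrm{div}\,V$ and $\dot A = (\mathrm{div}\,V)\,I - (DV + DV^{T})$, so that
$$M_{ij} = \int_\Omega \Big[(\mathrm{div}\,V)\,\nabla \eta_{k_i}\cdot\nabla \eta_{k_j} - (DV+DV^{T})\nabla \eta_{k_i}\cdot\nabla \eta_{k_j} - \lambda_k(\mathrm{div}\,V)\,\eta_{k_i}\eta_{k_j}\Big]\,dx.$$
The final and most delicate step is to reduce this volume integral to the boundary integral appearing in $\mathcal{M}$. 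Using the eigenvalue equations $-\Delta \eta_{k_i} = \lambda_k \eta_{k_i}$, integrating by parts the term $(DV+DV^{T})\nabla\eta_{k_i}\cdot\nabla\eta_{k_j}$, and exploiting the homogeneous Dirichlet condition—which forces the tangential part of $\nabla\eta_{k_i}$ to vanish on $\partial\Omega$, so that $\nabla\eta_{k_i} = \frac{\partial\eta_{k_i}}{\partial n}\,n$ there—the interior contributions cancel and one is left with $M_{ij} = \int_{\partial\Omega}\frac{\partial \eta_{k_i}}{\partial n}\frac{\partial \eta_{k_j}}{\partial n}\,(V\cdot n)\,ds$, matching $m_{ij}$ (up to the orientation/sign convention already fixed for the simple case in \eqref{val1} of Theorem \ref{proprsimpl}).

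I expect the main obstacle to be twofold. The tensor-algebra reduction in the last step is the computational heart of the argument: it is the off-diagonal generalization of the Hadamard formula \eqref{val1}, and the boundary terms generated when integrating $(DV+DV^{T})\nabla\eta_{k_i}\cdot\nabla\eta_{k_j}$ by parts must be tracked with care to see that everything collapses onto the normal derivatives. Conceptually, the more delicate point is the correct use of perturbation theory: unlike the simple case of Theorem \ref{proprsimpl}, the map $t\mapsto \lambda_k(\Omega_t)$ need not be differentiable, so one cannot differentiate a single smooth branch; one must instead justify that the splitting of the eigenvalue is governed by the spectrum of the finite-dimensional matrix $M$ on $E$, which is precisely what the self-adjointness and the analytic $t$-dependence secured in the first step make legitimate.
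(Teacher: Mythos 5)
The paper does not actually prove this theorem: its entire proof is the line ``See \cite{Amandine, Rousselet}.'' So there is no internal argument to compare against; what your proposal does is reconstruct the standard proof from those references --- pull-back of the eigenvalue problem to the fixed domain via $\phi_t=I+tV$, Rellich/Kato analytic perturbation theory for the resulting self-adjoint pair of forms $(a_t,b_t)$, identification of the one-sided derivatives of the ordered eigenvalue with the spectrum of the restriction of $\dot a-\lambda_k\dot b$ to the eigenspace $E$, and a Hadamard-type reduction of the resulting volume integral to the boundary --- and that outline is sound. One small imprecision: the displayed identity $\dot a(\eta_{k_i},\eta_{k_j})-\lambda_k\dot b(\eta_{k_i},\eta_{k_j})=\mu\,\delta_{ij}$ holds only in a basis of $E$ adapted to the analytic branches; in a general orthonormal basis the correct assertion is that the branch derivative $\mu$ is an eigenvalue of the symmetric matrix $M$, which is the statement you actually use, so this is a matter of wording rather than substance.

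The point you should not gloss over is the sign. Carrying out your last step exactly --- for instance by noting that your volume integrand is precisely the divergence of
\begin{equation*}
W=\bigl(\nabla\eta_{k_i}\cdot\nabla\eta_{k_j}-\lambda_k\eta_{k_i}\eta_{k_j}\bigr)V-(V\cdot\nabla\eta_{k_i})\nabla\eta_{k_j}-(V\cdot\nabla\eta_{k_j})\nabla\eta_{k_i},
\end{equation*}
and that $W\cdot n=-\frac{\partial\eta_{k_i}}{\partial n}\frac{\partial\eta_{k_j}}{\partial n}\,V\cdot n$ on $\partial\Omega$ because the Dirichlet condition kills the tangential gradients --- yields
\begin{equation*}
M_{ij}=-\int_{\partial\Omega}\frac{\partial\eta_{k_i}}{\partial n}\frac{\partial\eta_{k_j}}{\partial n}\,V\cdot n\,ds,
\end{equation*}
with a minus sign. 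This sign is forced by consistency with the paper's own Theorem \ref{proprsimpl}: for $p=1$ and $i=j$ your volume expression for $M_{11}$ is exactly the derivative of the simple Dirichlet eigenvalue, and \eqref{val1} carries the minus sign. Since $\mathcal M$ and $-\mathcal M$ do not have the same spectrum in general, this is not a harmless normalization: with the conventions fixed throughout this paper ($\Omega_t=(I+tV)(\Omega)$, $n$ the outward normal), a correct execution of your plan produces the negative of the matrix printed in the statement, i.e.\ the statement as printed has dropped a minus sign and is inconsistent with \eqref{val1} at $p=1$. Your appeal to an ``orientation/sign convention already fixed'' therefore points in the wrong direction: the convention is fixed, and it is the printed theorem, not your computation, that should carry the correction.
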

\begin{proof}
See \cite{Amandine, Rousselet}.
\end{proof}

\begin{theorem} \textnormal{(Neumann condition)}\\
Let $\Omega$  be an open set of class $\mathcal{C}^2$ and  $\lambda_k(\Omega)$ is an 
eigenvalue of multiplicity $p\geq 2.$ Denotes $u_{k_1},u_{k_2},\ldots,u_{k_p}$ an orthonormal family of associated vectors with $\lambda_k.$ Then  $t\mapsto\lambda_k(\Omega_t)$ has a directional derivative at $t=0$ which is the one of the eigenvalues of the matrix  $p\times p$ defined by
\begin{eqnarray}
\mathcal{M}=(m_{i,j})\,\,avec\,\,m_{i,j} =\int_{\partial\Omega}(\nabla\eta_{k_i}\cdot\nabla\eta_{k_j})d\sigma -k^2\int_{\partial\Omega}\eta_{k_i}\eta_{k_j} (V\cdot n)d\sigma\,\,i,j=1,\ldots,p.
\end{eqnarray}

\end{theorem}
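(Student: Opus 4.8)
The plan is to freeze the geometry by transporting the Neumann eigenvalue problem from the moving domain $\Omega_t$ back to the fixed domain $\Omega$, and then to apply the perturbation theory of an eigenvalue of multiplicity $p$ in order to identify the one-sided derivatives of the eigenvalue branches with the spectrum of $\mathcal M$.

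First I would pull the problem back through $\phi_t = I + tV$. Setting $\tilde\eta(x) = \eta_t(\phi_t(x))$ and using the change of variables $y = \phi_t(x)$, the weak Neumann formulation $\int_{\Omega_t}\nabla\eta_t\cdot\nabla v\,dy = \lambda(t)\int_{\Omega_t}\eta_t v\,dy$ becomes a generalized eigenvalue problem on the fixed space $H^1(\Omega)$,
$$a_t(u,v) = \lambda(t)\,b_t(u,v), \quad a_t(u,v) = \int_\Omega A_t\nabla u\cdot\nabla v\,dx, \quad b_t(u,v) = \int_\Omega u v\,J_t\,dx,$$
where $J_t = \det(D\phi_t)$ and $A_t = J_t\,(D\phi_t)^{-1}(D\phi_t)^{-T}$. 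Both forms are smooth in $t$ and reduce at $t=0$ to the standard ones ($A_0 = I$, $J_0 = 1$), so that $\lambda_k = k^2$ has eigenspace $E = \mathrm{span}\{\eta_{k_1},\dots,\eta_{k_p}\}$, orthonormal for $b_0$.

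Next I would invoke the perturbation theory for a multiple eigenvalue (Kato--Rellich, in the shape-optimization form used by Berger and Rousselet \cite{Amandine, Rousselet}). Since $\lambda_k$ is multiple, the branches $t\mapsto\lambda_k(\Omega_t)$ are in general only Lipschitz, not differentiable; what survives is that each branch possesses a one-sided derivative at $t=0$, and these right-derivatives are exactly the eigenvalues of the symmetric $p\times p$ matrix obtained by projecting the derivative of the transported operator onto $E$. A formal differentiation in $t$ of $a_t(u(t),\eta_{k_j}) = \lambda(t)b_t(u(t),\eta_{k_j})$, in which the terms carrying the unknown derivative of the eigenfunction cancel because the $\eta_{k_i}$ solve the unperturbed problem, reduces the determination of the one-sided derivative $\mu$ to the finite-dimensional eigenvalue problem $\mathcal M c = \mu c$, with
$$m_{ij} = a_0'(\eta_{k_i},\eta_{k_j}) - k^2\,b_0'(\eta_{k_i},\eta_{k_j}),$$
where $a_0'$ and $b_0'$ are the $t$-derivatives at $0$ of $a_t$ and $b_t$.

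Then I would compute these entries. Differentiating $J_t$ and $A_t$ at $t=0$ gives $\frac{d}{dt}J_t\big|_{0} = \mathrm{div}\,V$ and $\frac{d}{dt}A_t\big|_{0} = (\mathrm{div}\,V)I - DV - DV^T$, so
$$m_{ij} = \int_\Omega\big[(\mathrm{div}\,V)I - DV - DV^T\big]\nabla\eta_{k_i}\cdot\nabla\eta_{k_j}\,dx - k^2\int_\Omega\eta_{k_i}\eta_{k_j}\,\mathrm{div}\,V\,dx.$$
The final step turns this volume expression into a boundary integral: using $-\Delta\eta_{k_i} = k^2\eta_{k_i}$, the condition $\frac{\partial\eta_{k_i}}{\partial n} = 0$, and integration by parts, the interior contributions cancel and only the flux through $V\cdot n$ remains. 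This is precisely the polarized version of the computation carried out in Theorem \ref{proprsimpl}: the diagonal entries reproduce $(k^2)'(0) = \int_{\partial\Omega}(|\nabla\eta_{k_i}|^2 - k^2\eta_{k_i}^2)(V\cdot n)\,d\sigma$, and the off-diagonal entries are its symmetric extension, yielding
$$m_{ij} = \int_{\partial\Omega}\big(\nabla\eta_{k_i}\cdot\nabla\eta_{k_j} - k^2\eta_{k_i}\eta_{k_j}\big)(V\cdot n)\,d\sigma.$$
I expect the main obstacle to be the second step: rigorously justifying that the $p$-fold eigenvalue splits into branches possessing one-sided derivatives equal to the eigenvalues of $\mathcal M$. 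Unlike the simple case, one cannot differentiate a single smooth eigenpair; one must instead control the splitting of the $p$-dimensional eigenspace under perturbation through the Kato--Rellich theory (or a Lyapunov--Schmidt reduction onto $E$). The algebraic reduction and the volume-to-boundary conversion of the last two steps are then the routine bilinear counterparts of the already-established simple-eigenvalue formula.
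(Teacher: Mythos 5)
Your proposal is correct and follows essentially the same route the paper intends: the paper's own proof is a one-line deferral to the Dirichlet argument of \cite{Amandine, Rousselet} (``define the variational formulation in suitable spaces and adapt the same ideas''), and your pullback to the fixed domain followed by the Kato--Rellich finite-dimensional reduction on the eigenspace is precisely that adaptation, carried out in detail, with the correct form derivatives $\frac{d}{dt}J_t\big|_{0}=\mathrm{div}\,V$ and $\frac{d}{dt}A_t\big|_{0}=(\mathrm{div}\,V)I-DV-DV^{T}$ and the standard volume-to-boundary conversion using $-\Delta\eta_{k_i}=k^2\eta_{k_i}$ and $\frac{\partial\eta_{k_i}}{\partial n}=0$. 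Note that your computation yields $m_{ij}=\int_{\partial\Omega}\bigl(\nabla\eta_{k_i}\cdot\nabla\eta_{k_j}-k^2\eta_{k_i}\eta_{k_j}\bigr)(V\cdot n)\,d\sigma$, with the factor $(V\cdot n)$ multiplying \emph{both} terms; this is consistent with the simple-eigenvalue formula (\ref{val2}) of Theorem \ref{proprsimpl}, and it indicates that the theorem's displayed matrix, where the first integral lacks $(V\cdot n)$, contains a typo --- your version is the correct one.
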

\begin{proof}
The proof of this theorem is similar to that given in \cite{Amandine} for a Dirichlet condition. It is only a matter of defining the variational formulation in suitable spaces and then adapting the same ideas.
\end{proof}
\subsection{Shape optimization with obstacle problems}

In this precise case, we consider a bounded domain containing  a fluid described by Helmothz's equations and in which there exists an obstacle denoted  by $K.$ The shape of the obstacle is unknown, but we only have one of information, i.e. the variation of the flux on the boundary of the obstacle is null. We aim to characterize the optimal shape of $K.$ %in this part to identify the form of the obstacle which can characterize what we are looking for. 
For this purpose, we shall use an approach of shape optimization.%We use the shape optimization methods to be able to achieve this. 
We therefore begin by presenting the problem. Let $\Omega$ and $K$ two domains of $\mathbb R^2$ such that, $K\subset\Omega$ and $\partial\Omega\cap\partial K=\emptyset$. Let $J$ be the objective function defined by
\begin{eqnarray}\label{jobs}
J(K)=\int_{\Omega\backslash K}\lvert \nabla \eta- A\rvert^2 dx+\int_{\Omega\backslash K}\lvert \eta- \eta_0\rvert^2 dx.
\end{eqnarray}

where $\eta=\eta(K)$ is solution to%the unknown subset  satisfying %désigne l'obstacle, $\Omega$ est un ouvert contenant un compact de classe $\mathcal{C}^2$ tel que $\partial\Omega\cap\partial K=\emptyset$ et $\eta=\eta(K)$ est la solution de l'EDP suivante:
\begin{equation}\label{eobs1}
\begin{cases}
-\Delta \eta-k^2\eta=g\,\,\text{in}\,\, \Omega\backslash K\\[0.3cm] \frac{\partial \eta}{\partial n}=0\;\,\,\text{on}\,\,\partial K\\[0.3cm]  \eta=0\,\,\text{on}\,\,\partial\Omega,
\end{cases}
\end{equation}
where $\partial K$ is the boundary of $K.$ Let us defined the pertubed domain % désigne le bord intérieur et $\partial\Omega$ le bord extérieur.\\
 %Introduisons la transformation $\phi$ définie par:
%\begin{eqnarray}\label{fi1}
%\phi:t\in [0,T[\mapsto Id+tV
%\end{eqnarray}
%Nous définissons 
$K_t=\phi(t)(K)=(Id+tV)(K)$ and $\eta_t=\eta(K_t).$ The function $\phi$ is defined as previously. In $\Omega\backslash K_t$ the shape functional (\ref{jobs}) becomes % En utilisant ces transformations, on obtient:
\begin{eqnarray}
j(t)=J(K_t)=\int_{\Omega\backslash K_t}\lvert \nabla \eta_t- A\rvert^2 dx+\int_{\Omega\backslash K_t}\lvert \eta_t- \eta_0\rvert^2 dx
\end{eqnarray}
where $\eta_t$ is solution to
\begin{equation}\label{md6}
\begin{cases}
-\Delta \eta_t-k^2\eta_t=0\,\,\text{in}\,\,\Omega\backslash K_t\\[0.3cm] \frac{\partial \eta_t}{\partial n}=0\,\,\text{on}\,\,\partial K_t\\[0.3cm] \eta_t=0\,\,\text{on}\,\,\partial\Omega.
\end{cases}
\end{equation}
The objective of this section will also be to do the same work as previously by considering the position of the obstacle. We will therefore distinguish the various cases studied previously: the case of a simple eigenvalue and multiple eigenvalue  with a Dirichlet and Neumann conditions. Our first result is the following.
\begin{theorem}
Let $\Omega\subset\mathbb{R}^N$ be an open set of  class $\mathcal{C}^2$ and  $\Omega_t=\phi(t)\Omega$ where  $\phi$ is defined by (\ref{fi}). Let $\eta$ be the solution to (\ref{eobs1})  with $k^2\notin\Sigma$. Then the shape functional defined by (\ref{jobs}) is differentiable at $t=0$ and
\begin{eqnarray*}
DJ(\Omega;V)_{t=0}= j'(0)=
\int_{\partial K}\left(\lvert\nabla \eta- A\rvert^2+\lvert \eta- \eta_0\rvert^2  \right)(V\cdot n)d\sigma&+&
\int_{\partial K} p\left( \left(-\frac{\partial^2 \eta}{\partial n^2}\right) V\cdot n+\nabla \eta\cdot\nabla_{\Gamma}(V\cdot n)\right)d\sigma
\\&+&\int_{\partial\Omega}\left(\lvert\nabla \eta- A\rvert^2+\lvert\eta_0\rvert^2  \right)(V\cdot n)d\sigma.
\end{eqnarray*}
\end{theorem}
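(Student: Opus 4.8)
The plan is to adapt the adjoint-state Hadamard calculation already carried out in Theorem~\ref{nonpropre} to the perforated domain $\Omega\backslash K$ equipped with mixed boundary conditions, namely Dirichlet on the outer boundary $\partial\Omega$ and homogeneous Neumann on the obstacle boundary $\partial K$, both transported by the flow $\phi_t=Id+tV$. First I would record that, since $k^2\notin\Sigma$, the state problem (\ref{eobs1}) is well posed and the map $t\mapsto\eta_t$ is differentiable at $t=0$. Differentiating the interior equation shows that the shape derivative $\eta'$ solves the homogeneous Helmholtz equation $-\Delta\eta'-k^2\eta'=0$ in $\Omega\backslash K$, carrying the boundary data inherited from each face: on the Dirichlet part one obtains $\eta'=-\frac{\partial\eta}{\partial n}(V\cdot n)$, while on the Neumann obstacle boundary, differentiating the constraint $\frac{\partial\eta_t}{\partial n}=0$ along the moving $\partial K_t$ produces exactly the condition $\frac{\partial\eta'}{\partial n}=-\frac{\partial^2\eta}{\partial n^2}(V\cdot n)+\nabla\eta\cdot\nabla_\Gamma(V\cdot n)$ appearing in (\ref{formderive}).

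Next I would differentiate the functional (\ref{jobs}) by the transport (Reynolds) formula. Writing $F(x,s,z)=|z-A|^2+|s-\eta_0|^2$, this yields a volume contribution $\int_{\Omega\backslash K}\bigl(2(\eta-\eta_0)\eta'+2(\nabla\eta-A)\cdot\nabla\eta'\bigr)\,dx$ together with the geometric boundary term $\int_{\partial(\Omega\backslash K)}F\,(V\cdot n)\,d\sigma$. On $\partial\Omega$ the Dirichlet condition $\eta=0$ reduces $F$ to $|\nabla\eta-A|^2+|\eta_0|^2$, which is already the third integral of the statement, while on $\partial K$ the full $F$ supplies the first integral.

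To eliminate $\eta'$ from the volume term I would introduce the adjoint state $p$ solving $-\Delta p-k^2p=2(\eta-\eta_0)-2\Delta\eta+2\nabla\cdot A$ in $\Omega\backslash K$, with the matching mixed data $p=0$ on $\partial\Omega$ and $\frac{\partial p}{\partial n}=0$ on $\partial K$. Multiplying this equation by $\eta'$, integrating by parts the gradient part of $F$, and applying Green's second identity together with the interior relation $-\Delta\eta'-k^2\eta'=0$ collapses the whole volume integral onto $\partial(\Omega\backslash K)$. On $\partial\Omega$, where $p=0$ and $\eta'=-\frac{\partial\eta}{\partial n}(V\cdot n)$, the adjoint-coupled contributions assemble into the pure geometric term already obtained; on $\partial K$ the condition $\frac{\partial p}{\partial n}=0$ kills the $\eta'\,\partial_n p$ term, while the surviving $p\,\partial_n\eta'$ term, combined with the Neumann shape-derivative identity, produces precisely $\int_{\partial K}p\bigl(-\frac{\partial^2\eta}{\partial n^2}(V\cdot n)+\nabla\eta\cdot\nabla_\Gamma(V\cdot n)\bigr)\,d\sigma$, which is the second integral of the statement.

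The main obstacle is twofold. The first difficulty is to justify rigorously the differentiability of $t\mapsto\eta_t$ and the validity of the transport formula on the perturbed perforated domain; this rests on the non-resonance hypothesis $k^2\notin\Sigma$, which guarantees invertibility of $-\Delta-k^2$ under these mixed conditions and a uniform resolvent bound along the deformation. The second and more delicate point is the derivation of the Neumann shape-derivative boundary condition on the moving obstacle boundary: it is the differentiation of $\frac{\partial\eta_t}{\partial n}=0$ on $\partial K_t$ that generates both the second normal derivative $-\frac{\partial^2\eta}{\partial n^2}(V\cdot n)$ and the tangential term $\nabla\eta\cdot\nabla_\Gamma(V\cdot n)$, and care is needed when handling the residual boundary terms $2(\nabla\eta-A)\cdot n\,\eta'$ coming from the gradient part of $F$, which simplify through $\partial_n\eta=0$ on $\partial K$ and the Dirichlet structure on $\partial\Omega$. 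Once these two boundary identities are secured, the remaining manipulations reduce to the same Green's-formula bookkeeping already performed in Theorem~\ref{nonpropre}.
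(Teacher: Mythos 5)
Your overall architecture --- Hadamard formula for the perturbed functional, shape derivative $\eta'$ satisfying the homogeneous Helmholtz equation with the Neumann data (\ref{formderive}) on $\partial K$, adjoint state (\ref{md7}) with mixed data, and Green's identity to collapse the volume term onto the boundary --- is the same as the paper's. The genuine gap is in your treatment of the outer boundary $\partial\Omega$. In this section of the paper only the obstacle is transported: $K_t=\phi(t)(K)=(Id+tV)(K)$ and $\eta_t=\eta(K_t)$ solves (\ref{md6}) on $\Omega\backslash K_t$ with $\partial\Omega$ held fixed. Accordingly, the paper's proof takes $\eta'=0$ on $\partial\Omega$, so that in Green's identity the whole contribution on $\partial\Omega$ vanishes ($p=0$ kills $p\,\partial_n\eta'$ and $\eta'=0$ kills $\eta'\,\partial_n p$), leaving only $\int_{\partial K}p\,\partial_n\eta'\,d\sigma$, which the Neumann condition on $\partial K$ converts into the second integral of the statement.

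You instead let $\partial\Omega$ move and impose the transported Dirichlet condition $\eta'=-\frac{\partial\eta}{\partial n}(V\cdot n)$ there. With that choice the term $\int_{\partial\Omega}\eta'\,\frac{\partial p}{\partial n}\,d\sigma=-\int_{\partial\Omega}\frac{\partial\eta}{\partial n}\frac{\partial p}{\partial n}(V\cdot n)\,d\sigma$ survives Green's identity, and integrating $-2\int_{\Omega\backslash K}\eta'\Delta\eta\,dx$ by parts leaves an additional $2\int_{\partial\Omega}\bigl(\frac{\partial\eta}{\partial n}\bigr)^2(V\cdot n)\,d\sigma$: these are exactly the adjoint-coupled terms $\frac{\partial\eta}{\partial n}\frac{\partial p}{\partial n}-2\bigl(\frac{\partial\eta}{\partial n}\bigr)^2$ appearing in the Dirichlet formula of Theorem \ref{nonpropre}, and they are generically nonzero. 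Your claim that on $\partial\Omega$ ``the adjoint-coupled contributions assemble into the pure geometric term already obtained'' is therefore false; with your boundary condition the final formula would necessarily contain those extra terms on $\partial\Omega$, contradicting the very statement being proved. To land on the stated formula you must adopt the paper's reading: the deformation moves $K$ only, $\eta'=0$ on $\partial\Omega$, and the $\partial\Omega$ integral in the statement is purely the geometric transport term (indeed it vanishes when $V\cdot n=0$ on $\partial\Omega$). A smaller, shared imprecision: on $\partial K$ the residual term from the gradient part is $-2\int_{\partial K}(A\cdot n)\,\eta'\,d\sigma$, which is not killed by $\partial_n\eta=0$; the paper drops it silently as well, so this does not distinguish your argument from theirs, but your appeal to $\partial_n\eta=0$ to dispose of it is not correct as written.
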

\begin{proof}
Denoting by $\eta '$ the shape derivative of  $t\mapsto \eta_t$ at $t=0,$ we have,% on obtient:
\begin{eqnarray*}
j'(0)=2\int_{\Omega\backslash K}\nabla \eta'(\nabla \eta- A)+\eta'( \eta- \eta_0) dx+\int_{\partial(\Omega\backslash K)}\left(\lvert\nabla \eta- A\rvert^2+\lvert \eta- \eta_0\rvert^2  \right)(V\cdot n)d\sigma
\end{eqnarray*}
where $\eta '$ is solution to
\begin{equation*}
\begin{cases}
-\Delta\eta'-k^2\eta'=0\,\,\text{in}\,\,\Omega\backslash K \\[0.4cm] \frac{\partial \eta'}{\partial n}=\left(-\frac{\partial^2 \eta}{\partial n^2}\right) V\cdot n+\nabla \eta\cdot\nabla_{\Gamma}(V\cdot n)\,\,\text{on}\,\,\partial K\\[0.4cm] \eta '=0\,\,\text{on}\,\,\partial\Omega.
\end{cases}
\end{equation*}
The adjoint problem is also given by
\begin{equation}\label{md7}
\begin{cases}
-\Delta p-k^2p=2(\eta-\eta_0)-2\Delta\eta+2\nabla\cdot A\,\,\text{in}\,\,\Omega\backslash K\\[0.3cm] \frac{\partial p}{\partial n} =0\,\,\text{on}\,\,\partial K \\[0.3cm] p=0\,\,\text{on}\,\,\partial\Omega 
\end{cases}
\end{equation}
Multiplying (\ref{md7}) by $\eta'$nand integrating over $\Omega\backslash K,$ we have
\begin{eqnarray*}
-\int_{\Omega\backslash K}\eta ' \Delta pdx-\int_{\Omega\backslash K}k^2p\eta ' dx=\int_{\Omega\backslash K}2\eta '(\eta-\eta_0)dx-\int_{\Omega\backslash K}2\eta ' \Delta\eta dx+2\int_{\Omega\backslash K}\eta ' \nabla\cdot Adx
\end{eqnarray*}
giving with the use all the hypotheses, the following formulas
\begin{eqnarray*}
\int_{\Omega\backslash K}2\eta '(\eta-\eta_0)dx+\int_{\Omega\backslash K} 2\nabla\eta\nabla\eta'dx-2\int_{\Omega\backslash K}\nabla\eta ' \cdot Adx=\int_{\partial K} p \frac{\partial \eta '}{\partial n}d\sigma.
\end{eqnarray*}
Replacing this expression in the formule derivation and taking into account the boundary condition of   $\frac{\partial \eta'}{\partial n}$ we get the desired result.
\end{proof}
In the case where $k^2$ is a simple eigenvalue, we have the following theorem.
\begin{theorem}
Let $\Omega\subset\mathbb{R}^N$ be an open set of  class $\mathcal{C}^2$ and  $\Omega_t=\phi(t)\Omega$ where  $\phi$ is defined by (\ref{fi}). Let $\eta$ be the solution to (\ref{eobs1})  with $k^2$  be  a simple eigenvalue. Then the functional  defined by (\ref{jobs}) is differentiable at $t=0$ and

\begin{eqnarray*}
j'(0)=\int_{\partial(\Omega\backslash K)}\left(\lvert\nabla \eta- A\rvert^2+\lvert \eta- \eta_0\rvert^2  \right)(V\cdot n)d\sigma-\int_{\Omega\backslash K}(k^2)' \eta pdx-\int_{\partial K} p \frac{\partial \eta '}{\partial n}d\sigma
\end{eqnarray*}
when $(k^2)'$ is given by
\begin{eqnarray*}
(k^2)'(0)=\int_{\partial(\Omega\backslash K)}\lvert\nabla\eta\rvert^2 (V\cdot n)d\sigma-k^2\int_{\partial K}\eta^2 V\cdot nd\sigma
\end{eqnarray*}
\end{theorem}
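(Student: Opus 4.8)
The plan is to combine the adjoint-state technique of the preceding non-eigenvalue obstacle theorem with the simple-eigenvalue derivative of Theorem \ref{proprsimpl}, the essential new feature being that $k^2=k^2(\Omega_t)$ now varies with $t$. First I would apply Hadamard's boundary formula to $j(t)=J(K_t)$ to obtain
$$j'(0)=2\int_{\Omega\backslash K}\left(\nabla\eta'\cdot(\nabla\eta-A)+\eta'(\eta-\eta_0)\right)dx+\int_{\partial(\Omega\backslash K)}\left(\lvert\nabla\eta-A\rvert^2+\lvert\eta-\eta_0\rvert^2\right)(V\cdot n)\,d\sigma,$$
where $\eta'$ is the shape derivative of $t\mapsto\eta_t$ at $t=0$. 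The simplicity hypothesis on $k^2$ is precisely what guarantees, through Theorem \ref{proprsimpl}, that $t\mapsto k^2(\Omega_t)$ and $t\mapsto\eta_t$ are genuinely differentiable and not merely directionally so, so that $\eta'$ and $(k^2)'$ are well defined single quantities.

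Next I would identify the boundary value problem solved by $\eta'$. Differentiating the state equation $-\Delta\eta_t-k^2(\Omega_t)\eta_t=0$ in $t$ yields the crucial extra source term, namely $-\Delta\eta'-k^2\eta'=(k^2)'\eta$ in $\Omega\backslash K$. The boundary data are inherited from (\ref{md6}): the homogeneous Dirichlet condition $\eta'=0$ on the fixed outer boundary $\partial\Omega$, and on the moving obstacle boundary $\partial K$ the transported Neumann condition $\frac{\partial\eta'}{\partial n}=-\frac{\partial^2\eta}{\partial n^2}(V\cdot n)+\nabla\eta\cdot\nabla_\Gamma(V\cdot n)$. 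I would then introduce the adjoint state $p$ solving $-\Delta p-k^2p=2(\eta-\eta_0)-2\Delta\eta+2\nabla\cdot A$ in $\Omega\backslash K$ with $\frac{\partial p}{\partial n}=0$ on $\partial K$ and $p=0$ on $\partial\Omega$, exactly as in the non-eigenvalue obstacle case.

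The core computation is to multiply the adjoint equation by $\eta'$, integrate over $\Omega\backslash K$, and apply Green's second identity. The terms $-2\Delta\eta$ and $2\nabla\cdot A$ in the adjoint source are tailored so that, after one integration by parts, the volume integral appearing in $j'(0)$ is reproduced; the outer-boundary contributions drop out since $\eta=p=\eta'=0$ on $\partial\Omega$; and the surviving obstacle-boundary contribution is the term $\int_{\partial K}p\frac{\partial\eta'}{\partial n}\,d\sigma$. Crucially, it is the new source $(k^2)'\eta$ that feeds the volume contribution $\int_{\Omega\backslash K}(k^2)'\eta p\,dx$ into the identity. Substituting the resulting relation back into the Hadamard formula gives the stated expression for $j'(0)$, and I would at that stage fix the orientation of $n$ on $\partial K$ consistently so that the signs of the $(k^2)'$ and $\partial K$ terms come out as claimed.

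Finally, I would compute $(k^2)'$ on its own, adapting the Neumann argument of Theorem \ref{proprsimpl} to the mixed geometry. Multiplying the eigenvalue equation by $\eta_t$, integrating over $\Omega\backslash K_t$, and using the normalization $\int_{\Omega\backslash K_t}\eta_t^2\,dx=1$ together with both boundary conditions gives $k^2(t)=\int_{\Omega\backslash K_t}\lvert\nabla\eta_t\rvert^2\,dx$; differentiating this and reducing the volume term with Green's formula (while noting that $\eta=0$ on $\partial\Omega$ annihilates the $\eta^2$ contribution there) produces $(k^2)'(0)=\int_{\partial(\Omega\backslash K)}\lvert\nabla\eta\rvert^2(V\cdot n)\,d\sigma-k^2\int_{\partial K}\eta^2(V\cdot n)\,d\sigma$. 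I expect the main obstacle throughout to be the bookkeeping: correctly propagating the moving-boundary Neumann condition on $\partial K$ through the shape derivative of $\eta$, and tracking which boundary integrals vanish on $\partial\Omega$ versus persist on $\partial K$, since a single orientation or sign slip there corrupts the final formula.
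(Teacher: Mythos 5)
Your proposal follows exactly the route the paper intends: the paper's entire ``proof'' of this theorem is the single sentence that it is identical to the preceding (non-eigenvalue) obstacle theorem, and your argument --- Hadamard's formula, the perturbed shape-derivative equation $-\Delta\eta'-k^2\eta'=(k^2)'\eta$ with transported Neumann data on $\partial K$, the same adjoint state, Green's identity, and the adaptation of Theorem~\ref{proprsimpl} for $(k^2)'$ --- is precisely that adaptation carried out in full; your normalization argument for $(k^2)'$ reproduces the stated eigenvalue formula exactly.

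One point should not be papered over, however: carrying out your Green's identity computation (with the paper's adjoint, and with $\frac{\partial p}{\partial n}=0$, $\frac{\partial\eta}{\partial n}=0$ on $\partial K$ and $\eta'=p=0$ on $\partial\Omega$) gives
\begin{equation*}
j'(0)=\int_{\partial(\Omega\backslash K)}\left(\lvert\nabla\eta-A\rvert^2+\lvert\eta-\eta_0\rvert^2\right)(V\cdot n)\,d\sigma
+\int_{\Omega\backslash K}(k^2)'\,\eta\, p\,dx
+\int_{\partial K}p\,\frac{\partial\eta'}{\partial n}\,d\sigma,
\end{equation*}
with \emph{plus} signs, consistent with the paper's earlier simple-eigenvalue theorem without obstacle (where the term $+\int_\Omega(k^2)'\eta p\,dx$ appears) and with the sign of the $\partial K$ term in the non-eigenvalue obstacle theorem. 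Your suggestion that a suitable orientation of $n$ on $\partial K$ will produce the minus signs printed in the statement cannot work for the volume term $\int_{\Omega\backslash K}(k^2)'\eta p\,dx$, whose sign is unaffected by the choice of normal; the minus signs are an internal inconsistency of the paper's statement, not something your derivation should be contorted to match. A second, smaller remark: like the paper, you silently discard the boundary term $2\int_{\partial K}\eta'\,(A\cdot n)\,d\sigma$ arising when $2\int\eta'\,\nabla\cdot A\,dx$ is integrated by parts; this requires an assumption such as $A\cdot n=0$ on $\partial K$, which neither you nor the paper states.
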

\begin{proof} 
The proof of this theorem is identical to the above theorem.
\end{proof}

If $k^2$ is a multiple eigenvalue, we have also the following result
\begin{theorem}
Let $\Omega\subset\mathbb{R}^N$ be an open set of  class $\mathcal{C}^2$ and $K\subset \Omega.$  Let $\lambda_k(K)$ an eigenvalue with multiplicity $p\geq 2$. Denotes by $\eta_{k_1},\eta_{k_2},...,\eta_{k_p}$ a family of orthonormal associated eigenvector with $\lambda_k.$ Then $t\mapsto\lambda_k(K_t)$ has a directional derivative at  $t=0$ which is one of an eigenvalue of the matrix $p\times p$ defined by
\begin{eqnarray}
\mathcal{M}=(m_{i,j})\;\;avec\;\;m_{i,j}=\int_{\partial(\Omega\backslash K)}(\nabla\eta_{k_i}\cdot\nabla\eta_{k_j})d\sigma -\lambda_k\int_{\partial K}\eta_{k_i}\eta_{k_j} (V\cdot n)d\sigma\;\;i,j=1,\ldots,p.
\end{eqnarray}
%où $V\cdot n$ est le déplacement normal de la frontière induit par le champ de déformation $V$.
\end{theorem}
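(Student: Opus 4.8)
The plan is to adapt the directional-derivative computation for a multiple Dirichlet eigenvalue (Theorem~\ref{thm}) to the present Neumann obstacle setting, following the strategy of A. Berger and B. Rousselet. The starting point is the variational formulation of the eigenvalue problem on the perturbed domain $\Omega\backslash K_t$ with homogeneous Neumann condition on $\partial K_t$ and Dirichlet condition on $\partial\Omega$. Because $\lambda_k(K)$ has multiplicity $p\geq 2$, the branches $t\mapsto\lambda_k(K_t)$ are in general only Lipschitz and not differentiable; the correct object is the \emph{directional} derivative, and the whole point is to show it coincides with an eigenvalue of the $p\times p$ matrix $\mathcal M$. First I would transport everything back to the fixed reference domain $\Omega\backslash K$ via the diffeomorphism $\phi_t=Id+tV$, so that the moving eigenvalue problem becomes a family of problems on a fixed space but with $t$-dependent (and $t$-differentiable) coefficients obtained by pulling back the Laplacian through the Jacobian of $\phi_t$.

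The key step is the standard perturbation-theory reduction. On the fixed space, I would consider the eigenprojection onto the $p$-dimensional eigenspace $E=\mathrm{span}\{\eta_{k_1},\dots,\eta_{k_p}\}$ and examine the restriction of the perturbed bilinear form to $E$. Writing the perturbed eigenfunctions, to first order in $t$, as combinations $\sum_j c_j\,\eta_{k_j}+t\,(\text{correction})$, the eigenvalue equation projected onto $E$ yields a finite-dimensional generalized eigenproblem whose matrix is exactly $\frac{d}{dt}\big|_{t=0}$ of the pulled-back form restricted to $E$. Concretely, I would differentiate the Rayleigh-type quotients $a_t(u,v)$ and the mass form $m_t(u,v)$ at $t=0$ and evaluate on the basis pairs $(\eta_{k_i},\eta_{k_j})$. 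The derivative of the stiffness form produces a boundary term of the type $\int_{\partial(\Omega\backslash K)}(\nabla\eta_{k_i}\cdot\nabla\eta_{k_j})(V\cdot n)\,d\sigma$ after using the transport formulas and Green's identity, while the derivative of the mass constraint, combined with the Neumann condition on $\partial K$, produces the term $-\lambda_k\int_{\partial K}\eta_{k_i}\eta_{k_j}(V\cdot n)\,d\sigma$. Assembling these gives precisely the entries $m_{i,j}$ of the announced matrix $\mathcal M$, and the classical result that the directional derivatives of the $\lambda_k(K_t)$ are the eigenvalues of this symmetric matrix completes the argument.

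The main obstacle, and the part deserving the most care, is the non-differentiability at a multiple eigenvalue: one cannot simply differentiate a single smooth branch, so I would rely on the Kato--Rellich perturbation framework (or the Lipschitz selection argument used by Berger) to justify that the $p$ perturbed eigenvalues admit one-sided directional derivatives and that these are exactly the spectrum of $\mathcal M$. A secondary technical point is the correct handling of the mixed boundary conditions: only $\partial K$ moves (through $K_t$) while $\partial\Omega$ is fixed, so the boundary integrals naturally split, and on the Dirichlet part $\partial\Omega$ the eigenfunctions vanish, which is what makes the $\partial\Omega$ contribution reduce to the normal-gradient term already present in the stiffness derivative. Since the Neumann obstacle case differs from the Dirichlet case of Theorem~\ref{thm} only in the boundary conditions, the derivation is, as the earlier Neumann theorem in the excerpt already indicates, a matter of defining the variational formulation in the space $\{u\in H^1(\Omega\backslash K):u=0\text{ on }\partial\Omega\}$ and then adapting Berger's ideas verbatim, with the Neumann condition on $\partial K$ supplying the modified boundary terms.
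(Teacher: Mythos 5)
Your proposal takes essentially the same route as the paper: the paper's entire proof is a one-line reference stating that the result follows by adapting the arguments of Berger \cite{Amandine} and Rousselet \cite{Rousselet} to the present spaces and bilinear form, which is precisely the strategy you describe (and you flesh out the Kato--Rellich reduction, the transport to the fixed domain, and the splitting of the boundary terms in more detail than the paper itself provides). Incidentally, your version of the stiffness term, carrying the factor $(V\cdot n)$, is the correct Hadamard-type expression; its absence in the paper's stated matrix $\mathcal{M}$ appears to be a typographical slip inherited from the earlier Neumann theorem.
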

\begin{proof}
The proof of this theorem is almost identical to the one proposed for example in \cite{Amandine, Rousselet} . The idea is only to adapt the results developed in \cite{Amandine, Rousselet} with the considered spaces and bilinear form.
\end{proof}

\section{Topological Optimization }

The topological sensitivity analysis has been introduced by A. Friedman and M. S. Vogelus\cite{Friedman} in the context of shape inversion in electrostratics and by A. Schumacher \cite{Schumacher}, J. Sokolowski and A. Zochowski \cite{Sokolowski} for the minimization of the compliance. The principle is the following. Let us consider a cost function  $J(\Omega)= j(\Omega, u_\Omega)$ where $u_\Omega$ is the solution to a partial differential equation defined in the domain $\Omega\subset\mathbb R^n,\,\,n=2$ or $n=3,\,\,x_0\in\Omega$  and a fixed open and bounded subset $\omega$ of $\mathbb R^n$ containing the origin. The aim is to determine an asymptotic expansion of the criterion $J$ as follows
$$J(\Omega_\epsilon)-J(\Omega)=f(\epsilon)D_TJ(x_0)+o(f(\epsilon))$$
where $f(\epsilon)$ is a positive function tending to zero with $\epsilon$ small as small as we want. The function $D_TJ$ is called topological gradient or topological sensitivity. In this section, we are interested in the calculation of the topological derivative associated with the functional  defined in the next subsection under the constraint the Helmothz equation. We will be inspired by the work of Novotny and Sokolowski in \cite{NovotnySokolo} which lies the shape and topological optimization, Nazarov and Sokolowski \cite{Nazarov} and Masmoudi et al \cite{Masmoudi}.
\subsection{ Topological derivative with a perturbation on the source term}
\noindent
In this subsection, we consider the following functional
  \begin{eqnarray}\label{Jd1}
  J(\Omega)=\int_{\Omega}\lvert\nabla\eta-A \rvert^2dx+\int_{\Omega}\lvert\eta-\eta_0\rvert^2dx
  \end{eqnarray}
  where $\eta_0$ is the target function, assumed to be smooth and $A$ is given. The scalar field
$\eta$ is the solution of the following variation problem
  \begin{eqnarray}\label{eqd11}
  \begin{cases}
 \text{ Find}\,\,\eta\in H^1_0(\Omega)\,\,\text{such that}\,\,\\[0.3cm] \int_{\Omega}\nabla\eta\nabla vdx-\int_{\Omega}k^2\eta vdx=\int_{\Omega}fvdx\,\,\forall\;v\in H^1_0(\Omega),
  \end{cases}
   \end{eqnarray}
  where the function $f$ is assumed to be continuous.\\First, we calculate the topological derivative of the functional with a perturbation of the source term. We introduce a topological perturbation on the source term of the form as in \cite{NovotnySokolo}
  \begin{equation}\label{fepsilon}
  f_\epsilon(x)=
  \begin{cases}
  f(x)\;\;si\;\;x\in\Omega\backslash\overline{\omega_\epsilon}\\[0.2cm] \gamma f(x)\;\;si\;\;x\in\omega_\epsilon;
  \end{cases}
  \end{equation}
  where $\gamma\in (0,+\infty)$ is a constant parameter reflecting the physical and chemical properties of the material. The perturbed functional is defined by 
    \begin{eqnarray}\label{Jd11p}
    \psi(\chi_\epsilon)=J(\Omega_\epsilon)=\int_{\Omega}\lvert\nabla\eta_\epsilon-A \rvert^2dx+\int_{\Omega}\lvert\eta_\epsilon(x)-\eta_0(x) \rvert^2dx
    \end{eqnarray}
    where $\eta_\epsilon$  is solution to the variational problem
    \begin{equation}\label{eqd12}
    \begin{cases}
    \text{Find}\,\,\eta_\epsilon\in H^1_0(\Omega)\,\,\text{such that}\\[0.3cm] \int_{\Omega}\nabla\eta_\epsilon\nabla vdx-\int_{\Omega}k^2\eta_\epsilon vdx=\int_{\Omega}f_\epsilon vdx\,\,\forall\,v\in H^1_0(\Omega).
    \end{cases}
    \end{equation}
  We therefore need the following proposition
  
  \begin{prop1}\label{P1dt}
  Let $\psi(\chi_\epsilon(x_0))$ be the shape functional defined in $\Omega_\epsilon$ satisfying the following asymptotic development 
  \begin{equation}\label{dt} 
  \psi(\chi_\epsilon(x_0))=\psi(\chi)+ f(\epsilon)D_TJ(x_0)+\mathcal R(f(\epsilon)).
  \end{equation}
  It is assumed that the rest $\mathcal R(f(\epsilon))=o(f(\epsilon))$ additionally satisfies $\mathcal R'(f(\epsilon))\rightarrow 0$ when $\epsilon\rightarrow 0$. Then the topological derivative is written
  \begin{equation}\label{df-dt}
   D_TJ(x_0)=\lim_{\epsilon\rightarrow 0}\frac{1}{f'(\epsilon)}\frac{d}{d\epsilon}\psi(\chi_\epsilon(x_0)),
  \end{equation}
  where $\frac{d}{d\epsilon}\psi(\chi_\epsilon(x_0)),$ is the shape derivative of $\psi(\chi_\epsilon(x_0))$ with respect to the positive parameter $\epsilon$.
  \end{prop1}
  \begin{proof}
  Let us calculate the derivative of the expression (\ref{dt}) with respect to the real parameter $\epsilon$, i.e.
  \begin{eqnarray*}
  \frac{d}{d \epsilon}\psi(\chi_\epsilon(x_0))=f'(\epsilon)D_TJ(x_0)+\mathcal R'(f(\epsilon))f'(\epsilon).
  \end{eqnarray*}
  Dividing by $f'(\epsilon)$, we get
  \begin{eqnarray*}
  D_TJ(x_0)=\frac{1}{f'(\epsilon)}\frac{d}{d \epsilon}\psi(\chi_\epsilon(x_0))-\mathcal R'(f(\epsilon)).
  \end{eqnarray*}
  As $\lim_{\epsilon \to 0}R'(f(\epsilon)) =0$, by tending $\epsilon$ to $0$, we get the result.
  \end{proof}
  \begin{theorem}\label{theorem4.1}
 Let $\Omega\subset\mathbb R^n$ be an open set of class $\mathcal C^2$ and the function $J$ defined by (\ref{Jd11p}). Let $f_\epsilon$ defined by (\ref{fepsilon}).  Then, the topological derivative of the functional (\ref{Jd1}) is  given by
  \begin{eqnarray*}
  D_TJ(x_0)=(1-\gamma)f(x_0)p(x_0)
  \end{eqnarray*}
  where $p$ is the adjoint state, solution to the following variational equation 
    \begin{eqnarray*}
    \text{Find}\,\,p\in H^1_0(\Omega)\,\,\text{such that},\,\int_{\Omega}\nabla v\nabla p-k^2 v pdx=-2\int_{\Omega}(\nabla\eta-A)\nabla v+2(\eta-\eta_0)vdx,\;\forall\; v\in H^1_0(\Omega).
    \end{eqnarray*}
  \end{theorem}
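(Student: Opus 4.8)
The plan is to reduce everything to Proposition \ref{P1dt}: once the admissible asymptotic form (\ref{dt}) is in force, the topological derivative is delivered by formula (\ref{df-dt}) as the limit of $\frac{1}{f'(\epsilon)}\frac{d}{d\epsilon}\psi(\chi_\epsilon(x_0))$. So the whole task splits into differentiating the perturbed functional (\ref{Jd11p}) with respect to $\epsilon$ and then evaluating one limit, the normalisation ("gauge") function of Proposition \ref{P1dt} being taken here as $f(\epsilon)=|\omega_\epsilon|$, the Lebesgue measure of the inclusion.

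First I would record that the perturbation (\ref{fepsilon}) acts only on the right-hand side, since $f_\epsilon-f=(\gamma-1)f\,\mathbf 1_{\omega_\epsilon}$; in particular $f_\epsilon\to f$ in $L^2(\Omega)$, and because $k^2\notin\Sigma$ the states $\eta_\epsilon$ solving (\ref{eqd12}) and the associated adjoints $p_\epsilon$ are well defined by the Fredholm alternative and depend smoothly on $\epsilon$, with $\eta_\epsilon\to\eta$ and $p_\epsilon\to p$ in $H^1_0(\Omega)$. Differentiating (\ref{Jd11p}) then gives, with $\dot\eta_\epsilon=\frac{d}{d\epsilon}\eta_\epsilon$,
\begin{eqnarray*}
\frac{d}{d\epsilon}\psi(\chi_\epsilon)=\int_\Omega 2(\nabla\eta_\epsilon-A)\cdot\nabla\dot\eta_\epsilon+2(\eta_\epsilon-\eta_0)\dot\eta_\epsilon\,dx .
\end{eqnarray*}

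The heart of the argument is the adjoint reduction. Writing $a(u,v)=\int_\Omega(\nabla u\cdot\nabla v-k^2uv)\,dx$ for the symmetric bilinear form of (\ref{eqd11}) and taking the adjoint state $p_\epsilon$ defined by the variational equation of the statement (with $\eta$ replaced by $\eta_\epsilon$), the test function $v=\dot\eta_\epsilon$ turns the integral above into exactly $-a(\dot\eta_\epsilon,p_\epsilon)$. On the other hand, differentiating (\ref{eqd12}) in $\epsilon$ yields $a(\dot\eta_\epsilon,v)=(\gamma-1)\frac{d}{d\epsilon}\int_{\omega_\epsilon}f v\,dx$ for every fixed $v$; choosing $v=p_\epsilon$ and using $\frac{d}{d\epsilon}\int_{\omega_\epsilon}h\,dx=\int_{\partial\omega_\epsilon}h\,d\sigma$ for continuous $h$, I obtain
\begin{eqnarray*}
\frac{d}{d\epsilon}\psi(\chi_\epsilon)=-a(\dot\eta_\epsilon,p_\epsilon)=(1-\gamma)\int_{\partial\omega_\epsilon}f\,p_\epsilon\,d\sigma .
\end{eqnarray*}

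Finally I would pass to the limit. With $f(\epsilon)=|\omega_\epsilon|$ one has $f'(\epsilon)=|\partial\omega_\epsilon|$, so (\ref{df-dt}) gives
\begin{eqnarray*}
D_TJ(x_0)=\lim_{\epsilon\to 0}\frac{1-\gamma}{|\partial\omega_\epsilon|}\int_{\partial\omega_\epsilon}f\,p_\epsilon\,d\sigma=(1-\gamma)f(x_0)p(x_0),
\end{eqnarray*}
the last equality expressing that the average of the continuous integrand $f\,p_\epsilon$ over the shrinking sphere $\partial\omega_\epsilon$ converges to its value at the centre $x_0$. I expect the main obstacle to be precisely the rigorous justification of this final step: one must upgrade $p_\epsilon\to p$ from $H^1_0(\Omega)$ to local uniform convergence near $x_0$, which needs elliptic regularity for the adjoint problem together with the continuity of $f$, and one must verify that the remainder in (\ref{dt}) genuinely satisfies $\mathcal R'(f(\epsilon))\to 0$ so that Proposition \ref{P1dt} applies. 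The differentiability of $\epsilon\mapsto\eta_\epsilon$ and the interchange of $\frac{d}{d\epsilon}$ with the integral over $\omega_\epsilon$ are the supporting technical points, but they are routine once the convergence of $p_\epsilon$ near $x_0$ is secured.
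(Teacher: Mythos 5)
Your argument is sound in outline, but it takes a genuinely different route from the paper's. The paper never invokes Proposition \ref{P1dt}: it works with finite differences and the \emph{unperturbed} adjoint $p$, testing the adjoint equation (\ref{eqd14}) with $v=\eta_\epsilon-\eta$ and the difference equation (\ref{eqd25}) with $v=p$ to obtain the exact identity
\begin{eqnarray*}
\psi(\chi_\epsilon)-\psi(\chi)=(1-\gamma)\int_{\omega_\epsilon}fp\,dx+\lVert\eta_\epsilon-\eta\rVert^2_{H^1(\Omega)},
\end{eqnarray*}
and then extracts the leading term $(1-\gamma)\pi\epsilon^2 f(x_0)p(x_0)$ from the volume average, using continuity of $fp$ at $x_0$ and Lemma \ref{estimationleme} for the remainder. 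You instead differentiate in $\epsilon$, work with the $\epsilon$-level adjoint $p_\epsilon$, and finish with a surface average over $\partial\omega_\epsilon$. What the paper's route buys is economy of hypotheses: it needs no differentiability of $\epsilon\mapsto\eta_\epsilon$ (note that $\epsilon\mapsto f_\epsilon$ is differentiable only as a map into $H^{-1}(\Omega)$, not into $L^2(\Omega)$, so your $\dot\eta_\epsilon$ exists only in that weaker sense and solves a problem with surface-measure data), no surface integrals, and no convergence of $p_\epsilon$ beyond the $H^1$ estimate of the companion lemma. What your route buys is that it actually puts Proposition \ref{P1dt} to use (the paper states it but never applies it) and follows the standard Novotny--Sokolowski methodology. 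Two caveats if you carry it out: (i) Proposition \ref{P1dt} \emph{presupposes} that an expansion of the form (\ref{dt}) holds, which is part of what is to be proved; to avoid circularity you should phrase the last step as a l'H\^opital argument --- since $\psi(\chi_\epsilon)-\psi(\chi)\to 0$ and $|\omega_\epsilon|\to 0$, existence of the limit of the derivative quotient yields the expansion with that limit as coefficient; (ii) the upgrade of $p_\epsilon\to p$ to uniform convergence near $x_0$, which you rightly identify as the crux, genuinely requires interior elliptic estimates, because the right-hand side of the equation satisfied by $p_\epsilon-p$ is driven by $\eta_\epsilon-\eta$ and is therefore not supported near $x_0$. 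Both points are repairable, so your proof can be completed; in fairness, the paper's own remainder control is not airtight either, since Lemma \ref{estimationleme} only gives $\lVert\eta_\epsilon-\eta\rVert^2_{H^1(\Omega)}=O(\epsilon^2)$, which is the same order as the leading term rather than $o(\epsilon^2)$.
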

  Before giving the proof of this theorem, we introduce this results which are usefull for the calculation of the topological derivative.%Avant de donner la preuve du théorème, nous introduisons deux lemmes qui sont très utiles dans le calcul de dérivée topologique.
  \begin{lemma}\label{estimationleme}
Let $\eta$ and $\eta_\epsilon$ be the solutions of (\ref{eqd11}) and (\ref{eqd12})  respectively. Then, the following estimate holds% 
  \begin{eqnarray}\label{ingalityestimation}
  \lVert\eta_\epsilon-\eta \rVert_{H^1(\Omega)}\leq C\epsilon
  \end{eqnarray}
  where $C$ is a positive constant independent of the small parameter $\epsilon.$ %and $0<\delta>>.$
  \end{lemma}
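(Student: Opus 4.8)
The plan is to subtract the two variational problems (\ref{eqd11}) and (\ref{eqd12}), test the resulting equation against $w_\epsilon := \eta_\epsilon - \eta$ itself, and then bound the right-hand side by exploiting that $f_\epsilon - f$ is supported only on the small inclusion $\omega_\epsilon$. First I would write the weak formulation for the difference: for all $v \in H_0^1(\Omega)$,
\begin{equation*}
\int_\Omega \nabla w_\epsilon \cdot \nabla v \, dx - \int_\Omega k^2 w_\epsilon v \, dx = \int_\Omega (f_\epsilon - f) v \, dx = (\gamma - 1)\int_{\omega_\epsilon} f v \, dx,
\end{equation*}
where the last equality uses the definition (\ref{fepsilon}) of $f_\epsilon$, since $f_\epsilon - f$ vanishes outside $\omega_\epsilon$ and equals $(\gamma-1)f$ inside it.

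Choosing $v = w_\epsilon$, the left-hand side is the bilinear form of the Helmholtz operator evaluated on the diagonal. The key analytic point is that, since $k^2$ is assumed not to be an eigenvalue (so that by the first theorems the problem is well posed and the operator is an isomorphism on $H_0^1(\Omega)$), this bilinear form satisfies a G\aa rding-type inf-sup estimate; equivalently, the solution operator is bounded, which yields $\|w_\epsilon\|_{H^1(\Omega)} \le C \|f_\epsilon - f\|_{H^{-1}(\Omega)}$ or directly $\|w_\epsilon\|_{H^1(\Omega)} \le C \sup_{v} \frac{(\gamma-1)\int_{\omega_\epsilon} f v}{\|v\|_{H^1}}$. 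The remaining task is to show that the right-hand side functional $v \mapsto (\gamma-1)\int_{\omega_\epsilon} f v \, dx$ has $H^{-1}(\Omega)$-norm of order $\epsilon$.

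To estimate that functional, I would apply H\"older's inequality to get $\bigl|\int_{\omega_\epsilon} f v \, dx\bigr| \le \|f\|_{L^\infty(\omega_\epsilon)} |\omega_\epsilon|^{1/2} \|v\|_{L^2(\omega_\epsilon)}$, using that $f$ is continuous (hence locally bounded). Since $\omega_\epsilon = x_0 + \epsilon\,\omega$ is the scaled inclusion, its measure satisfies $|\omega_\epsilon| = \epsilon^n |\omega|$ in dimension $n$, so $|\omega_\epsilon|^{1/2} = O(\epsilon^{n/2})$; combined with $\|v\|_{L^2(\omega_\epsilon)} \le \|v\|_{L^2(\Omega)} \le C\|v\|_{H^1(\Omega)}$, this bounds the functional's dual norm by $C\epsilon^{n/2}$. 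For $n \ge 2$ one has $\epsilon^{n/2} \le C\epsilon$ for $\epsilon$ small, giving $\|w_\epsilon\|_{H^1(\Omega)} \le C\epsilon$ as claimed.

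The main obstacle I anticipate is not the scaling of the source perturbation, which is routine, but rather justifying the coercivity-type bound on the Helmholtz bilinear form: because $-\Delta - k^2$ is indefinite, one cannot simply apply Lax--Milgram, and the uniform bound on the solution operator must be invoked from the well-posedness under the hypothesis $k^2 \notin \Sigma$ (Theorem with the Fredholm alternative). A careful statement would note that the estimate $\|w_\epsilon\|_{H^1} \le C\|f_\epsilon - f\|_{H^{-1}}$ is precisely the continuity of the inverse operator on the orthogonal complement of the kernel, which is trivial here since the kernel is $\{0\}$. One should also take care that the constant $C$ is independent of $\epsilon$: this follows because $C$ depends only on $\Omega$, $k^2$, and $\|f\|_{L^\infty}$ near $x_0$, none of which vary with $\epsilon$.
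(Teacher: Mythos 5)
Your proposal is correct, and at the decisive step it takes a genuinely different route from the paper. Both arguments begin identically: subtract the two variational problems to get, for all $v\in H_0^1(\Omega)$, $\int_\Omega \nabla(\eta_\epsilon-\eta)\cdot\nabla v\,dx - k^2\int_\Omega(\eta_\epsilon-\eta)v\,dx = -(1-\gamma)\int_{\omega_\epsilon}fv\,dx$, and both estimate the right-hand side by $C\epsilon\lVert v\rVert_{H^1(\Omega)}$ using the continuity of $f$, H\"older's inequality and $\lvert\omega_\epsilon\rvert^{1/2}=O(\epsilon^{n/2})$. The divergence is in how the indefinite term $-k^2\lVert\eta_\epsilon-\eta\rVert_{L^2}^2$ is handled. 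The paper tests with $v=\eta_\epsilon-\eta$ and attempts a G\aa rding-plus-absorption argument: it reduces to $(\beta_2-k^2)\lVert\tilde\eta_\epsilon\rVert_{H^1}^2\le C_2\epsilon\lVert\tilde\eta_\epsilon\rVert_{H^1}$ and concludes directly when $\beta_2-k^2>0$; but when $\beta_2-k^2\le 0$ it simply introduces $M_1>0$ with $M_1+\beta_2-k^2>0$ and asserts $(M_1+\beta_2-k^2)\lVert\tilde\eta_\epsilon\rVert_{H^1}^2\le C_2\epsilon\lVert\tilde\eta_\epsilon\rVert_{H^1}$ --- a step that is not justified, since one cannot add a positive multiple of $\lVert\tilde\eta_\epsilon\rVert_{H^1}^2$ to the smaller side of an inequality and keep it valid. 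You instead invoke exactly what is needed: under $k^2\notin\Sigma$ the operator $-\Delta-k^2:H_0^1(\Omega)\to H^{-1}(\Omega)$ is a bounded bijection (Fredholm alternative), hence has a bounded inverse, giving $\lVert\eta_\epsilon-\eta\rVert_{H^1}\le C\lVert f_\epsilon-f\rVert_{H^{-1}}\le C'\epsilon$ with $C$ manifestly independent of $\epsilon$. What each approach buys: the paper's is elementary and genuinely suffices in the ``small $k^2$'' regime where the form is coercive after absorption, while yours requires explicitly the hypothesis $k^2\notin\Sigma$ (which is the natural standing assumption, since otherwise the solutions in the lemma are not even unique) but is complete for every such $k^2$; in effect you identified, and correctly repaired, the precise difficulty that the paper's own proof glosses over. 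One cosmetic remark: you announce the choice $v=w_\epsilon$ but your actual argument (the bounded-inverse/dual-norm estimate) never uses that test function; you could delete that sentence without loss.
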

  \begin{proof}
Let $a(\eta,v)$ be the bilinear formula
\begin{eqnarray*}
a(\eta,v)=\int_{\Omega}\nabla\eta\nabla vdx-\int_{\Omega}k^2\eta vdx
\end{eqnarray*}
 $a(\eta,v)$ is $H^1_0(\Omega)$ elliptical in the following sense: there are two constants $\alpha\in\mathbb{R}$ and $\beta >0$ such that 
      \begin{eqnarray}
      \lvert a(\eta,\eta)\rvert\geq \beta\lVert\eta\rVert^2_{H^1_0(\Omega)}-\alpha \lVert \eta\rVert^2_{L^2(\Omega)}
      \end{eqnarray}
 For this, just estimate $a(\eta, \eta)$, we have
       \begin{eqnarray*}
        a(\eta,v)&=&\int_{\Omega}\lvert\nabla\eta\rvert^2 dx-\int_{\Omega}k^2\lvert\eta\rvert^2 dx=\lVert\nabla\eta\rVert^2_{L^2(\Omega)}-k^2 \lVert \eta\rVert^2_{L^2(\Omega)}\\&=&\lVert\eta\rVert^2_{H^1_0(\Omega)}-k^2 \lVert \eta\rVert^2_{L^2(\Omega)}\\
        \end{eqnarray*}
So there exists $0<\beta\leq 1$ such that $a(\eta,v)$ is $H^1_0(\Omega)$ elliptical with $\alpha=k^2$. By taking $\tilde\eta_\epsilon=\eta_\epsilon-\eta$ we get from (\ref{eqd11}) and (\ref{eqd12}) the following result
      \begin{eqnarray*}
      \int_{\Omega}\nabla\tilde{\eta_\epsilon}\nabla vdx-\int_{\Omega}k^2\tilde{\eta_\epsilon} vdx&=&\int_{\Omega}f_\epsilon vdx-\int_{\Omega}f vdx \\ &=&\int_{\Omega\backslash \omega_\epsilon}f vdx+\gamma \int_{\omega_\epsilon}f vdx-\int_{\Omega}fvdx\\&=&\int_{\Omega}f vdx-(1-\gamma)\int_{\omega_\epsilon}f vdx-\int_{\Omega}f v\\&=&-(1-\gamma)\int_{\omega_\epsilon}f vdx
      \end{eqnarray*}
      This leads to
      \begin{eqnarray}
      \int_{\Omega}\nabla\tilde{\eta_\epsilon}\nabla vdx-\int_{\Omega}k^2\tilde{\eta_\epsilon} vdx=-(1-\gamma)\int_{\omega_\epsilon}f vdx
      \end{eqnarray}
     Since $\tilde{\eta}_\epsilon\in H^1_0(\Omega)$,  we can take $v=\tilde{\eta_\epsilon}$. This gives
      \begin{eqnarray*}
      \int_{\Omega}\lvert\nabla\tilde{\eta}_\epsilon\rvert^2dx-\int_{\Omega}k^2\lvert \tilde{\eta}_\epsilon\rvert^2 dx=-(1-\gamma)\int_{\omega_\epsilon}f\tilde{\eta}_\epsilon dx
      \end{eqnarray*}
      Using the fact that $a(\eta,\eta)$ is $H^1_0(\Omega)$ elliptical
      \begin{eqnarray*}
         \beta\lVert\tilde{\eta}_\epsilon\rVert^2_{H^1_0(\Omega)}-k^2 \lVert \tilde{\eta}_\epsilon\rVert^2_{L^2(\Omega)}\leq a(\tilde{\eta}_\epsilon,\tilde{\eta}_\epsilon)\leq \lvert 1-\gamma\rvert\int_{\omega_\epsilon}\lvert f\rvert\lvert\tilde{\eta}_\epsilon\rvert dx
          \end{eqnarray*}
      The Cauchy Schwarz inequality leads to:
      \begin{eqnarray*}
      \beta\lVert\tilde{\eta}_\epsilon\rVert^2_{H^1_0(\Omega)}-k^2 \lVert \tilde{\eta}_\epsilon\rVert^2_{L^2(\Omega)} &\leq&  \lvert 1-\gamma\rvert\lVert f\rVert_{L^2(\omega_\epsilon)}\lVert \tilde{\eta}_\epsilon\rVert_{L^2(\omega_\epsilon)}\\ &\leq& C_1\epsilon\lVert \tilde{\eta}_\epsilon\rVert_{L^2(\omega_\epsilon)}\\ &\leq& C_2\epsilon \lVert\tilde{\eta}_\epsilon\rVert_{H^1(\Omega)}.
      \end{eqnarray*}
    Alternatively, we can write
       \begin{eqnarray*}
        \beta\lVert\tilde{\eta}_\epsilon\rVert^2_{H^1_0(\Omega)}-k^2 \lVert \tilde{\eta}_\epsilon\rVert^2_{H^1(\Omega)} \leq C_2\epsilon \lVert\tilde{\eta}_\epsilon\rVert_{H^1(\Omega)}.
        \end{eqnarray*}
      As the norms $\lVert\cdot\rVert_{H^1_0(\Omega)}$ and $\lVert\cdot\rVert_{H^1(\Omega)}$ are equivalent, we can find $\beta_1>0$ such that
        \begin{eqnarray*}
           \beta_1\lVert\tilde{\eta}_\epsilon\rVert^2_{H^1(\Omega)}\leq \lVert\tilde{\eta}_\epsilon\rVert^2_{H^1_0(\Omega)}.
           \end{eqnarray*}
           Therefore, taking $\beta_2$ as the product of $\beta$ and $\beta_1$
           \begin{eqnarray*}
               \beta_2\lVert\tilde{\eta}_\epsilon\rVert^2_{H^1(\Omega)}-k^2 \lVert \tilde{\eta}_\epsilon\rVert^2_{H^1(\Omega)} \leq C_2\epsilon \lVert\tilde{\eta}_\epsilon\rVert_{H^1(\Omega)}.
               \end{eqnarray*}
   If $\beta_2-k^2>0$, then we have, for $C=\frac{C_2}{\beta_2-k^2}$
       \begin{eqnarray*}
                 \lVert\tilde{\eta}_\epsilon\rVert^2_{H^1(\Omega)} \leq C\epsilon.
       \end{eqnarray*}
   If $\beta_2-k^2 \leq 0$, we can find $M_1>0$ such that $M_1+\beta_2-k^2> 0$ and such that
       \begin{eqnarray*}
         (M_1+\beta_2-k^2)\lVert\tilde{\eta}_\epsilon\rVert^2_{H^1(\Omega)}\leq C_2\epsilon \lVert\tilde{\eta}_\epsilon\rVert_{H^1(\Omega)}
       \end{eqnarray*}
      and so by taking $C=\frac{C_2}{M_1+\beta_2-k^2}$, we obtain the desired result.\\
\end{proof}
The adjoint state $p_\epsilon$ is solution to
  \begin{equation}\label{eqd13}
  \begin{cases}
   \text{Find}\,\,p_\epsilon\in H^1_0(\Omega)\,\,\text{such that}\,\\[0.3cm] \int_{\Omega}\nabla v\nabla p_\epsilon dx-\int_{\Omega}k^2 v p_\epsilon dx=-2\int_{\Omega}(\nabla\eta_\epsilon-A)\nabla vdx-2\int_{\Omega}(\eta_\epsilon-\eta_0)vdx\,\,\forall\,v\in H^1_0(\Omega),
  \end{cases}
  \end{equation}
 and the adjoint state $p$ is solution to 
  \begin{equation}\label{eqd14}
  \begin{cases}
  \text{Find}\,\,p\in H^1_0(\Omega)\,\,\text{such that}\,\,\\[0.3cm] \int_{\Omega}\nabla v\nabla p dx-\int_{\Omega}k^2 v pdx=-2\int_{\Omega}(\nabla\eta-A)\nabla vdx-2\int_{\Omega}(\eta-\eta_0)vdx\;v\in H^1_0(\Omega).
  \end{cases}
  \end{equation}
  We have also the following lemma
  \begin{lemma}
 Let $p$ and $p_\epsilon$ be the solutions of (\ref{eqd14}) and (\ref{eqd13})  respectively. Then, the following estimate holds true
  \begin{eqnarray*}
  \lVert p_\epsilon-p \rVert_{H^1(\Omega)}\leq M\epsilon
  \end{eqnarray*}
  with $M$ is a positive constant not depending on the small parameter $\epsilon.$
  \end{lemma}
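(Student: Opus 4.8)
The plan is to mimic the proof of Lemma \ref{estimationleme} almost verbatim, since the adjoint problems (\ref{eqd13}) and (\ref{eqd14}) are governed by the very same bilinear form $a(\cdot,\cdot)$ as the direct problems. First I would subtract the variational identity (\ref{eqd14}) from (\ref{eqd13}) and set $\tilde p_\epsilon=p_\epsilon-p$. Because the left-hand sides differ only through the substitution $p_\epsilon\mapsto p$ while the bilinear form is unchanged, every term on the right that does not depend on $\epsilon$ cancels, and one is left with
$$\int_\Omega\nabla\tilde p_\epsilon\nabla v\,dx-\int_\Omega k^2\tilde p_\epsilon v\,dx=-2\int_\Omega\nabla(\eta_\epsilon-\eta)\nabla v\,dx-2\int_\Omega(\eta_\epsilon-\eta)v\,dx,$$
valid for all $v\in H^1_0(\Omega)$. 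The crucial observation is that the source term is now driven entirely by the difference $\tilde\eta_\epsilon=\eta_\epsilon-\eta$, which has already been controlled in Lemma \ref{estimationleme}.

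Next I would test the identity above with $v=\tilde p_\epsilon$ and apply the Cauchy--Schwarz inequality to the right-hand side, which yields a bound of the form $2\lVert\tilde\eta_\epsilon\rVert_{H^1(\Omega)}\lVert\tilde p_\epsilon\rVert_{H^1(\Omega)}$. Invoking the estimate $\lVert\tilde\eta_\epsilon\rVert_{H^1(\Omega)}\le C\epsilon$ from Lemma \ref{estimationleme} then converts this into $C'\epsilon\,\lVert\tilde p_\epsilon\rVert_{H^1(\Omega)}$. On the left-hand side I would use the same G\r{a}rding-type (``elliptical'') inequality $\lvert a(\tilde p_\epsilon,\tilde p_\epsilon)\rvert\ge\beta\lVert\tilde p_\epsilon\rVert_{H^1_0(\Omega)}^2-k^2\lVert\tilde p_\epsilon\rVert_{L^2(\Omega)}^2$ already established, together with the equivalence of the $H^1_0$ and $H^1$ norms on $H^1_0(\Omega)$.

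The main obstacle is precisely the one met in Lemma \ref{estimationleme}: the form $a$ is not coercive because of the indefinite term $-k^2\lVert\tilde p_\epsilon\rVert_{L^2}^2$, so one cannot simply divide by the $H^1$ norm. I would resolve it by the identical shifting argument: if the effective constant $\beta_2-k^2$ is positive one divides directly, and otherwise one adds a sufficiently large multiple $M_1\lVert\tilde p_\epsilon\rVert_{H^1}^2$ to both sides so that $M_1+\beta_2-k^2>0$, and then divides through by $\lVert\tilde p_\epsilon\rVert_{H^1(\Omega)}$. In either case one arrives at $\lVert\tilde p_\epsilon\rVert_{H^1(\Omega)}\le M\epsilon$ with $M$ independent of $\epsilon$, which is the assertion. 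Implicit throughout is that $k^2\notin\Sigma$, which guarantees via the Fredholm alternative recalled earlier that both adjoint states exist and are unique, so that $\tilde p_\epsilon$ is well defined.
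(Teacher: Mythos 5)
Your proposal is correct and takes essentially the same route as the paper: the paper's proof consists precisely of forming the difference of (\ref{eqd13}) and (\ref{eqd14}) and invoking the estimate (\ref{ingalityestimation}) of Lemma \ref{estimationleme}, then repeating the coercivity (G\r{a}rding-type) argument verbatim. You have merely spelled out the steps (cancellation of the $A$ and $\eta_0$ terms, testing with $\tilde p_\epsilon$, Cauchy--Schwarz, and the norm-shifting argument) that the paper leaves implicit.
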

 \begin{proof}
 The proof of this lemma is identical to that of the lemma \ref{estimationleme}. We make the difference between (\ref{eqd13}) and (\ref{eqd14}) and use the inequality (\ref{ingalityestimation}).
    \end{proof}
\begin{proof}\,of Theorem\,\ref{theorem4.1}.\\
Let $\eta$ be the solution to (\ref{eqd11}) and $\eta_\epsilon$ the solution to (\ref{eqd12}), then $\eta_\epsilon-\eta$ is solution to
\begin{eqnarray}\label{eqd25}
      \int_{\Omega}\nabla(\eta_\epsilon-\eta)\nabla vdx-\int_{\Omega}k^2(\eta_\epsilon-\eta) vdx=-(1-\gamma)\int_{\omega_\epsilon}f vdx.
        \end{eqnarray}
On the other hand $\psi(\chi_\epsilon)-\psi(\chi)$ can be written as follows
    
\begin{eqnarray*}
\psi(\chi_\epsilon)-\psi(\chi)&=&\int_{\Omega}\lvert\nabla\eta_\epsilon-A \rvert^2dx+\int_{\Omega}\lvert\eta_\epsilon-\eta_0 \rvert^2dx-\int_{\Omega}\lVert\nabla\eta-A \rVert^2dx-\int_{\Omega}\lvert\eta-\eta_0\rvert^2dx\\&=&\int_{\Omega}\lvert\nabla\eta_\epsilon-\nabla\eta\rvert^2dx
+\int_{\Omega}\lvert\nabla\eta-A\rvert^2dx-\int_{\Omega}\lvert\nabla\eta-A \rvert^2dx+2\int_{\Omega}(\nabla\eta-A)\nabla(\eta_\epsilon-\eta)dx
\\&+&\int_{\Omega}\lvert\eta_\epsilon-\eta\rvert^2dx+\int_{\Omega}\lvert\eta-\eta_0 \rvert^2dx+2\int_{\Omega}(\eta_\epsilon-\eta)(\eta-\eta_0)dx-\int_{\Omega}\lvert\eta-\eta_0\rvert^2dx\\&=&2\int_{\Omega}(\nabla\eta-A)\nabla(\eta_\epsilon-\eta)dx+2\int_{\Omega}(\eta_\epsilon-\eta)(\eta-\eta_0)dx+\Vert\eta_\epsilon-\eta\Vert_{H^1(\Omega)}.
\end{eqnarray*}
Taking  $v=\eta_\epsilon-\eta$ as a test function in (\ref{eqd14}), we have 
 \begin{eqnarray}\label{testeta}
  \int_{\Omega}\nabla(\eta_\epsilon-\eta)\nabla pdx-\int_{\Omega}k^2(\eta_\epsilon-\eta) pdx=-2\int_{\Omega}(\nabla\eta-A)\nabla(\eta_\epsilon-\eta)dx-2\int_{\Omega}(\eta-\eta_0)(\eta_\epsilon-\eta)dx.
 \end{eqnarray}
In (\ref{eqd25}), if  $v=p,$ we get also%dans l'équation  nous avons aussi
 \begin{eqnarray}\label{testp}
  \int_{\Omega}\nabla(\eta_\epsilon-\eta)\nabla pdx-\int_{\Omega}k^2(\eta_\epsilon-\eta) pdx=-(1-\gamma)\int_{\omega_\epsilon} fpdx.
 \end{eqnarray}
From equalities (\ref{testeta}) and (\ref{testp}), we get
\begin{eqnarray*}
2\int_{\Omega}(\nabla\eta-A)\nabla(\eta_\epsilon-\eta)dx+2\int_{\Omega}(\eta-\eta_0)(\eta_\epsilon-\eta)dx=(1-\gamma)\int_{\omega_\epsilon} fpdx,
 \end{eqnarray*}
therefore $\psi(\chi_\epsilon)-\psi(\chi)$ can be rewritten as
 \begin{eqnarray*}
 \psi(\chi_\epsilon)-\psi(\chi)&=&(1-\gamma)\int_{\omega_\epsilon} fpdx+\Vert\eta_\epsilon-\eta\Vert_{H^1(\Omega)}\\&=&(1-\gamma)\pi\epsilon^2f(x_0)p(x_0)+\Vert\eta_\epsilon-\eta\Vert_{H^1(\Omega)}+r(\epsilon)
  \end{eqnarray*}
where  $r(\epsilon)$ is given by% est donné par
\begin{eqnarray*}
r(\epsilon)=(1-\gamma)\int_{\omega_\epsilon}\left( fp-f(x_0)p(x_0)\right) dx.
\end{eqnarray*}
If $r(\epsilon)=o(\epsilon^2)$, we conclude that, the topological derivative is given by
\begin{eqnarray*}
  D_TJ(x_0)=(1-\gamma)f(x_0)p(x_0).
\end{eqnarray*}
\end{proof}
\subsection{Topological derivative with  perturbations of the domain }
\noindent
Let $\Omega$ be an open of $\mathbb R^N,\,\,N\geq 2.$ Let $\omega_\epsilon,\,\,\epsilon>0$ be the domain defined by $\omega_\epsilon(x_0)=x_0+\epsilon\omega$ satisfying $\overline{\omega_\epsilon}\subset\Omega,$ and $x_0$ any point
of $\Omega$ and $\omega$ a fixed domain. In this part, the topological derivative of the shape functional $J$ associated with the Helmholtz equation is studied by considering a Dirichlet condition on the boundary of the hole $\omega_\epsilon$. In this case, the initial geometrical domain is topologically perturbed by the insertion of a hole. What concerns us here is the spectral problem, that is, when $k^2=\lambda$ is the first eigenvalue associated to the operator $-\Delta$, and therefore there is no longer a source term. Since the eigenvalue also depends on the domain, we will also look for its asymptotic expansion. Without loss of generality, we assume that $N=2$ and let $\eta_\epsilon$ be the solution of the following equation
\begin{equation}\label{pert1}\left\{\begin{array}{ccc}\Delta \eta_\epsilon+\lambda_\epsilon\eta_\epsilon= 0\,\,\text{in}\,\,\Omega_\epsilon\\
\eta_\epsilon=0\,\,\text{on}\,\,\partial\Omega\\
\eta_\epsilon=0\,\,\text{on}\,\,\partial\omega_\epsilon.\end{array}\right.\end{equation}
The objective of this section, is also, to get the topological derivative of the functional $J$ defined by (\ref{jlambda})
\begin{eqnarray}\label{jlambda}
J(\Omega)=\int_{\Omega}\lvert\nabla\eta (x)-A \rvert^2dx+\int_{\Omega}\lvert\eta(x)-\eta_0(x) \rvert^2dx
\end{eqnarray}
 where $\eta$ is solution to (\ref{pert1}), for $\epsilon=0$ ie.
 \begin{equation}\label{Eqd1}
 \begin{cases}
 \Delta\eta+\lambda\eta=0\;\;\text{in}\;\;\Omega\\[0.2cm] \;\;\;\;\;\; \eta=0\;\;\text{on} \;\;\partial\Omega
 \end{cases}
 \end{equation}
The variational problem associated to (\ref{pert1}) is given by
 \begin{equation}\label{eqd013}
  \begin{cases}
   \text{Find}\,\,\eta_\epsilon\in H^1_0(\Omega)\,\,\text{such that}\,\\[0.3cm] \int_{\Omega_\epsilon}\nabla \eta_\epsilon\nabla vdx-\int_{\Omega}\lambda_\epsilon \eta_\epsilon vdx=0\,\,\forall\,v\in H^1_0(\Omega)
  \end{cases}
  \end{equation}
In the unperturbed domain, the variational problem is given by
\begin{equation}\label{eqd0013}
  \begin{cases}
   \text{Find}\,\,\eta\in H^1_0(\Omega)\,\,\text{such that}\,\\[0.3cm] \int_{\Omega}\nabla \eta\nabla vdx-\int_{\Omega}\lambda \eta vdx=0\,\,\forall\,v\in H^1_0(\Omega)
  \end{cases}
  \end{equation}
Formulas (\ref{eqd013}) and (\ref{eqd0013}) allow us to pose
\begin{equation}
a_\epsilon(\eta_\epsilon,v)=\int_{\Omega_\epsilon}\nabla \eta_\epsilon\nabla vdx-\int_{\Omega_\epsilon}k^2 \eta_\epsilon v dx
\end{equation}
and 
\begin{equation}
a(\eta,v)=\int_{\Omega}\nabla \eta\nabla vdx-\int_{\Omega}k^2 \eta v dx.
\end{equation}
\subsubsection{Asymptotic expression of the eigenvalue $\lambda_\epsilon$}
Which requires, as in the following, to first prove a certain number of preliminary results, useful for the proof. We first prove.

We assume that the solution to the problem can be approximated as
\begin{eqnarray}
\eta_\epsilon(x)\sim\eta(x)+w_0(\epsilon^{-1}(x-x_0))+\epsilon \eta_1(x).
\end{eqnarray}
where the functions $\eta$ and $\eta_1$ depending on $x$ are regular approximation terms, i.e. functions defined in $\Omega$, $w_0$ depends on $\xi=\epsilon^{-1}(x-x_0)$ is a solution of an exterior boundary problem.\\
 We have the following theorem

\begin{theorem} 
Let $\epsilon>0,\,\,(\eta_\epsilon,\lambda_\epsilon)$ be the solution to (\ref{pert1}) and 
$(\eta,\lambda)$ be the  solution to (\ref{Eqd1}).  The we have, the following asymptotic expansions
\begin{equation}
\lambda_\epsilon= \lambda+4\pi\epsilon\eta(x_0)cap(\omega)+O(\epsilon^2)
\end{equation}
Moreover, we have
\begin{equation}\label{aproxilamda}
\vert\vert\eta_\epsilon-\eta\vert\vert_{H^1(\Omega)}= O(\epsilon^2).
\end{equation}
\end{theorem}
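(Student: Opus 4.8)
The plan is to combine the variational (Rayleigh quotient) characterization of the first eigenvalue with the method of compound asymptotic expansions suggested by the ansatz $\eta_\epsilon(x)\sim\eta(x)+w_0(\epsilon^{-1}(x-x_0))+\epsilon\eta_1(x)$. Since $\lambda$ and $\lambda_\epsilon$ are first Dirichlet eigenvalues on $\Omega$ and on $\Omega_\epsilon=\Omega\setminus\overline{\omega_\epsilon}$ respectively, I would start from
$$\lambda_\epsilon=\min_{v\in H^1_0(\Omega_\epsilon)\setminus\{0\}}\frac{\int_{\Omega_\epsilon}|\nabla v|^2\,dx}{\int_{\Omega_\epsilon}v^2\,dx},\qquad \lambda=\min_{v\in H^1_0(\Omega)\setminus\{0\}}\frac{\int_{\Omega}|\nabla v|^2\,dx}{\int_{\Omega}v^2\,dx},$$
and aim to produce matching upper and lower bounds for $\lambda_\epsilon-\lambda$ whose common leading term is the announced quantity, normalising both eigenfunctions by $\int\eta^2=\int\eta_\epsilon^2=1$ so that $\int_{\Omega_\epsilon}\eta\,\eta_\epsilon\to 1$.

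First I would analyse the inner problem. Passing to the stretched variable $\xi=\epsilon^{-1}(x-x_0)$, the operator $\Delta+\lambda_\epsilon$ becomes $\epsilon^{-2}\Delta_\xi+\lambda_\epsilon$, so at leading order the corrector $w_0$ must be harmonic in the exterior domain $\R^N\setminus\overline{\omega}$. Matching the Dirichlet condition $\eta_\epsilon=0$ on $\partial\omega_\epsilon$ against the regular part forces the boundary value $w_0=-\eta(x_0)$ on $\partial\omega$ together with decay at infinity, so that $w_0=-\eta(x_0)\,P_\omega$, where $P_\omega$ is the capacitary potential of $\omega$ and its flux is governed by $cap(\omega)$. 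Substituting the ansatz into the equation and collecting the $O(\epsilon)$ terms gives a problem of the form $\Delta\eta_1+\lambda\eta_1=(\text{known source})$ in $\Omega$ with $\eta_1=0$ on $\partial\Omega$; by the Fredholm alternative invoked earlier, solvability requires the source to be orthogonal to the unperturbed eigenfunction $\eta$, and it is precisely this compatibility condition that fixes the leading correction of $\lambda_\epsilon$.

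Rather than rely on formal matching alone, I would pin down the coefficient by an exact integral identity. Multiplying the equation for $\eta_\epsilon$ by $\eta$ and that for $\eta$ by $\eta_\epsilon$, subtracting and integrating over $\Omega_\epsilon$, all bulk terms cancel and Green's formula leaves a flux integral on $\partial\omega_\epsilon$ equal to $(\lambda_\epsilon-\lambda)\int_{\Omega_\epsilon}\eta\,\eta_\epsilon$. Inserting the inner expansion into that flux integral, using $\eta\approx\eta(x_0)$ near the hole and $\partial_n\eta_\epsilon\approx\partial_n w_0$, converts the boundary term into the capacity of $\omega$ weighted by the eigenfunction datum at $x_0$; dividing by the normalisation then yields the announced leading-order correction. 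For the energy estimate $\|\eta_\epsilon-\eta\|_{H^1(\Omega)}=O(\epsilon^2)$ I would set $\tilde\eta_\epsilon=\eta_\epsilon-\eta$, suitably extended into the hole, write the variational equation it satisfies, and re-run the Gårding/ellipticity argument of Lemma \ref{estimationleme}, bounding the right-hand side through the smallness of $\omega_\epsilon$ and the capacitary control of $w_0$.

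The main obstacle is the inner/outer matching and, above all, the two-dimensional logarithmic subtlety. When $N=2$ the exterior harmonic corrector with prescribed boundary value and decay is solvable only in a weighted sense, and the intrinsic small parameter multiplying $cap(\omega)$ carries a factor $|\log\epsilon|$; reconciling this with a purely algebraic expansion and keeping the remainder at the claimed $O(\epsilon^2)$ order is the delicate step. It demands a careful choice of cut-off and of the intermediate matching annulus, together with a sharp estimate of the flux on $\partial\omega_\epsilon$, and it is exactly in this step that the precise constant and $\epsilon$-power in the stated expansion are decided.
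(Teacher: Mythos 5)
Your core plan coincides with the paper's own proof: the paper uses exactly the compound ansatz $\eta_\epsilon\sim\eta+w_0(\epsilon^{-1}(x-x_0))+\epsilon\eta_1$, takes $w_0$ to be the harmonic capacitary corrector scaled by $\eta(x_0)cap(\omega)$, collects the $O(\epsilon)$ terms into a boundary value problem for $\eta_1$, and extracts $\lambda_1$ by multiplying that problem by $\eta$, integrating, and applying Green's formula together with the identity $-\Delta(\lvert x-x_0\rvert^{-1})=4\pi\delta_{x_0}$ --- i.e.\ precisely the solvability (orthogonality) condition you describe, executed concretely. What you add beyond the paper is the rigor scaffolding: the Rayleigh-quotient upper and lower bounds, and the Green reciprocity identity on $\Omega_\epsilon$ reducing $(\lambda_\epsilon-\lambda)\int_{\Omega_\epsilon}\eta\,\eta_\epsilon$ to a flux integral on $\partial\omega_\epsilon$. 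The paper has neither: its argument is entirely formal, with no control of remainders, and it never returns to prove the estimate (\ref{aproxilamda}) at all, even though the proof opens by announcing it will.

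More importantly, the ``main obstacle'' you flag at the end is not a technicality you failed to overcome; it is a genuine defect of the paper's statement and proof, and you are right to isolate it. The paper declares ``without loss of generality $N=2$'' and then computes with the corrector $\lvert x-x_0\rvert^{-1}$ and with $-\Delta(\lvert x-x_0\rvert^{-1})=4\pi\delta$, which are three-dimensional facts. In dimension $2$, as you observe, the exterior harmonic corrector is logarithmic and the eigenvalue shift is of order $1/\lvert\log\epsilon\rvert$, so the stated expansion with coefficient $4\pi\,cap(\omega)$ and remainder $O(\epsilon^2)$ is the three-dimensional (Ozawa-type) result and is false as a claim for $N=2$. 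Two further discrepancies are worth recording. First, the paper's computation ends with $\lambda_1=4\pi\eta(x_0)^2cap(\omega)$, while the theorem's display has $\eta(x_0)$ unsquared; since the first Dirichlet eigenfunction is positive in $\Omega$, these genuinely differ. Second, the claimed bound $\lVert\eta_\epsilon-\eta\rVert_{H^1(\Omega)}=O(\epsilon^2)$ is incompatible with the eigenvalue expansion: by the very Rayleigh-quotient argument you propose, with both eigenfunctions $L^2$-normalized one gets $\lvert\lambda_\epsilon-\lambda\rvert\leq\lVert\nabla(\eta_\epsilon-\eta)\rVert_{L^2}\lVert\nabla(\eta_\epsilon+\eta)\rVert_{L^2}=O(\epsilon^2)$, contradicting the nonzero first-order term since $\eta(x_0)cap(\omega)\neq 0$; the rate actually carried by the corrector $w_0$ in $H^1$ is $O(\epsilon^{1/2})$ when $N=3$. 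So your strategy, including re-running the ellipticity argument of Lemma \ref{estimationleme}, can only succeed if you fix $N=3$ and replace the $O(\epsilon^2)$ claim by the correct $O(\epsilon^{1/2})$ rate; do not attempt to salvage the statement as written.
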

\begin{proof}
To prove the equality (\ref{aproxilamda}), we look for an expression of the eigenvalue $\lambda_\epsilon$ of the form
\begin{eqnarray}
\lambda_\epsilon\sim\lambda+\epsilon\lambda_1+\sum_{k=2}^{\infty}\epsilon^k\lambda_k,
\end{eqnarray}
where $\lambda$ is the first eigenvalue associated with the normalized eigenfunction $\eta$ of the problem (\ref{Eqd1}).\\ The first order approximation of the simple eigenvalue then takes the following form
\begin{eqnarray}\label{aslamb}
\lambda_\epsilon\sim\lambda+\epsilon\lambda_1.
\end{eqnarray}
Let us replace $\lambda_\epsilon$ and $\eta_\epsilon$ by their expression in the equation (\ref{pert1})
\begin{eqnarray*}
-\Delta(\eta(x)+w_0(\epsilon^{-1}(x-x_0))+\epsilon \eta_1(x))=(\lambda+\epsilon\lambda_1)(\eta(x)+w_0(\epsilon^{-1}(x-x_0))+\epsilon \eta_1(x)).
\end{eqnarray*}
This implies that 
\begin{eqnarray*}
-\Delta\eta(x)-\Delta w_0(\epsilon^{-1}(x-x_0))-\epsilon\Delta\eta_1(x))&=&\lambda\eta(x)+\lambda w_0(\epsilon^{-1}(x-x_0))+\epsilon\lambda \eta_1(x) + \epsilon\lambda_1\eta(x)\\[0.2cm] &\;\;&+\epsilon\lambda_1 w_0(\epsilon^{-1}(x-x_0))+\epsilon^2\lambda_1 \eta_1(x).
\end{eqnarray*}
Due to the fact that  $-\Delta\eta(x)=\lambda\eta(x)$ and $\Delta w_0(\epsilon^{-1}(x-x_0))=0$, we end up with
\begin{eqnarray*}
-\epsilon\Delta\eta_1(x))-\epsilon\lambda \eta_1(x)-\epsilon\lambda_1\eta(x)&=&\lambda w_0(\epsilon^{-1}(x-x_0))+\epsilon\lambda_1 w_0(\epsilon^{-1}(x-x_0))+\epsilon^2\lambda_1 \eta_1(x)\\[0.2cm] &=&-\epsilon\lambda\lvert x-x_0\rvert^{-1}\eta(x_0)cap(\omega)-\epsilon^2\lambda_1\lvert x-x_0\rvert^{-1}\eta(x_0)cap(\omega).
\end{eqnarray*}
Neglecting terms of order higher than  $\epsilon$, then we have the following equation
\begin{equation}
\begin{cases}
\Delta \eta_1(x) +\lambda\eta_1(x)+\lambda_1\eta(x)=\lambda\eta(x_0)cap(\omega)\lvert x-x_0\rvert^{-1},\;\;\;x\in\Omega\\[0.3cm]
\eta_1(x)=\eta(x_0)cap(\omega)\lvert x-x_0\rvert^{-1},\;\;\;x\in\partial\Omega
\end{cases}
\end{equation}
Let us multiply the equation by $\eta$ and integrate on $\Omega$
\begin{eqnarray*}
\int_\Omega\Delta \eta_1(x)\eta(x) +\lambda\eta_1(x)\eta(x)+\lambda_1\eta(x)^2 dx=\int_\Omega\lambda\eta(x_0)cap(\omega)\lvert x-x_0\rvert^{-1}\eta(x)dx
\end{eqnarray*}
Since $\int_\Omega\eta(x)^2dx=1$, we have:
\begin{eqnarray*}
\lambda_1&=&-\int_\Omega\Delta \eta_1(x)\eta(x)dx -\int_\Omega\lambda\eta_1(x)\eta(x)dx+\int_\Omega\lambda\eta(x_0)cap(\omega)\lvert x-x_0\rvert^{-1}\eta(x)dx\\[0.3cm] &=&-\int_\Omega\Delta \eta_1(x)\eta(x)dx +\int_\Omega\eta_1(x)\Delta\eta(x)dx+\int_\Omega\lambda\eta(x_0)cap(\omega)\lvert x-x_0\rvert^{-1}\eta(x)dx
\end{eqnarray*}
Using Green's formula, we have
\begin{eqnarray*}
\lambda_1&=&-\int_\Omega\Delta \eta_1(x)\eta(x)dx +\int_\Omega\eta_1(x)\frac{\partial\eta(x)}{\partial n}ds-\int_\Omega\nabla\eta_1(x)\nabla\eta(x) dx+\int_\Omega\lambda\eta(x_0)cap(\omega)\lvert x-x_0\rvert^{-1}dx\\[0.2cm]
&=&-\int_\Omega\Delta \eta_1(x)\eta(x)dx +\int_{\partial\Omega}\eta_1(x)\frac{\partial\eta(x)}{\partial n}ds+\int_\Omega\Delta\eta_1(x)\eta(x) dx-\int_{\partial\Omega}\eta(x)\frac{\partial\eta_1(x)}{\partial n}ds\\[0.2cm] &+&\int_\Omega\lambda\eta(x_0)cap(\omega)\lvert x-x_0\rvert^{-1}\eta(x)dx\\[0.2cm]
&=&\int_{\partial\Omega}\eta_1(x)\frac{\partial\eta(x)}{\partial n}ds-\int_{\partial\Omega}\eta(x)\frac{\partial\eta_1(x)}{\partial n}ds+\int_\Omega\lambda\eta(x_0)cap(\omega)\lvert x-x_0\rvert^{-1}\eta(x) dx
\end{eqnarray*}
Taking into account the fact that $\eta$ is null on the boundary of $\Omega$ and replacing $\eta_1$ by its expression on the edge we have
\begin{eqnarray*}
\lambda_1&=&\eta(x_0)cap(\omega)\left\lbrace\int_{\partial\Omega}\frac{\partial\eta(x)}{\partial n}\lvert x-x_0\rvert^{-1} ds+\int_\Omega\lambda\lvert x-x_0\rvert^{-1}\eta(x) dx \right\rbrace\\[0.2cm] &=&\eta(x_0)cap(\omega)\left\lbrace\int_{\partial\Omega}\frac{\partial\eta(x)}{\partial n}\lvert x-x_0\rvert^{-1} ds-\int_\Omega\lvert x-x_0\rvert^{-1}\Delta\eta(x) dx \right\rbrace
\end{eqnarray*}
Using Green's formula we get
\begin{eqnarray*}
\lambda_1 &=&\eta(x_0)cap(\omega)\left\lbrace \int_{\partial\Omega}\frac{\partial\eta(x)}{\partial n}\lvert x-x_0\rvert^{-1} ds-\int_{\partial\Omega}\frac{\partial\eta(x)}{\partial n}\lvert x-x_0\rvert^{-1} ds+\int_\Omega\nabla(\lvert x-x_0\rvert^{-1})\nabla\eta(x) dx \right\rbrace\\[0.2cm] &=&\eta(x_0)cap(\omega)\int_\Omega\nabla(\lvert x-x_0\rvert^{-1})\nabla\eta(x) dx \\[0.2cm] &=&
\eta(x_0)cap(\omega)\left\lbrace \int_{\partial\Omega}\frac{\partial(\lvert x-x_0\rvert^{-1})}{\partial n}\eta(x) ds-\int_\Omega\Delta(\lvert x-x_0\rvert^{-1})\eta(x) dx \right\rbrace\\[0.2cm] &=&
-\eta(x_0)cap(\omega)\int_\Omega\Delta(\lvert x-x_0\rvert^{-1})\eta(x) dx\\[0.2cm] &=&4\pi\eta(x_0)cap(\omega)\int_\Omega\delta(x-x_0)\eta(x) dx\\[0.2cm] &=&4\pi\eta(x_0)cap(\omega)\int_\Omega\delta(y)\eta(y+x_0) dy\\[0.3cm] &=&4\pi\eta(x_0)^2cap(\omega)
\end{eqnarray*}
Therefore, we obtain
\begin{eqnarray*}
\lambda_\epsilon=\lambda+4\epsilon\pi\eta(x_0)^2cap(\omega)+O(\epsilon^2)
\end{eqnarray*}
where $\delta$ is the Dirac mass at the origin
\end{proof}

\subsubsection{Asymptotic expansion of the functional}
In the following, we prove the following theorem establishing the topological derivative of the functional
\begin{theorem}
Let $J$ be the functional given by
\begin{eqnarray*}
J(\Omega)=\int_{\Omega}\lvert\nabla\eta (x)-A(x_0) \rvert^2dx+\int_{\Omega}\lvert\eta(x)-\eta_0(x) \rvert^2dx
\end{eqnarray*}
then the topological derivative of $J$ at point $x_0$ is given by
\begin{eqnarray*}
DT(x_0)=mes(\omega)(\nabla\eta(x_0)\nabla p(x_0)-\lambda\eta(x_0) p(x_0)-\lvert \nabla\eta(x_0)-A(x_0)\rvert^2-\lvert \eta(x_0)-\eta_0(x_0)\rvert^2)
\end{eqnarray*}
where $\eta$ is the solution of the Helmholtz equation (\ref{Eqd1}) and $p$ the solution of the adjoint problem
\begin{eqnarray*}
\text{Find}\;p\in H^1_0(\Omega),\;-\int_\Omega\nabla p\nabla vdx+\lambda\int_\Omega pvdx=2\int_\Omega (\nabla\eta-A)\nabla vdx+2\int_\Omega (\eta-\eta_0)vdx\;\;\forall\; v\in H^1_0(\Omega).
\end{eqnarray*}
\end{theorem}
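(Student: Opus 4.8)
The plan is to follow the scheme already used for the source-term perturbation in the proof of Theorem \ref{theorem4.1}, adapting it to a \emph{geometric} (hole) perturbation and carrying along the eigenvalue shift recorded in the preceding theorem. Throughout I set $f(\epsilon)=\epsilon^{N}$, so that $\mathrm{mes}(\omega_\epsilon)=\epsilon^{N}\,\mathrm{mes}(\omega)=f(\epsilon)\,\mathrm{mes}(\omega)$ (here $N=2$ as fixed above), and I write $a(u,v)=\int_\Omega\nabla u\cdot\nabla v\,dx-\lambda\int_\Omega uv\,dx$ for the Helmholtz bilinear form. The final passage from the scaled $\epsilon$-derivative to the topological gradient will be handled by Proposition \ref{P1dt}.

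First I would split the unperturbed domain as $\Omega=\Omega_\epsilon\cup\omega_\epsilon$ and write the functional increment as
\begin{align*}
J(\Omega_\epsilon)-J(\Omega)&=\int_{\Omega_\epsilon}\!\big(|\nabla\eta_\epsilon-A|^2-|\nabla\eta-A|^2\big)dx+\int_{\Omega_\epsilon}\!\big(|\eta_\epsilon-\eta_0|^2-|\eta-\eta_0|^2\big)dx\\
&\quad-\int_{\omega_\epsilon}\!\big(|\nabla\eta-A|^2+|\eta-\eta_0|^2\big)dx.
\end{align*}
By continuity of the fields near $x_0$ the last integral equals $-\,\mathrm{mes}(\omega_\epsilon)\big(|\nabla\eta(x_0)-A(x_0)|^2+|\eta(x_0)-\eta_0(x_0)|^2\big)+o(\mathrm{mes}(\omega_\epsilon))$, which already supplies the last two terms of $D_TJ(x_0)$. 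Linearising the two integrals over $\Omega_\epsilon$ exactly as before, the purely quadratic pieces are bounded by $\|\eta_\epsilon-\eta\|_{H^1(\Omega)}^2=O(\epsilon^4)$ thanks to estimate (\ref{aproxilamda}) of the previous theorem, hence are negligible against $f(\epsilon)$; only the cross terms $2\int_{\Omega_\epsilon}\big[(\nabla\eta-A)\cdot\nabla(\eta_\epsilon-\eta)+(\eta-\eta_0)(\eta_\epsilon-\eta)\big]dx$ survive.

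Next I would rewrite these cross terms by means of the adjoint state $p$. Testing the adjoint variational equation with $v=\eta_\epsilon-\eta$ (the extension of $\eta_\epsilon$ by zero inside $\omega_\epsilon$ being admissible since $\eta_\epsilon=0$ on $\partial\omega_\epsilon$) turns the cross terms into $-a(\eta_\epsilon-\eta,p)$ up to a correction supported on $\omega_\epsilon$. Subtracting the state formulations (\ref{eqd013}) and (\ref{eqd0013}) for test functions in $H_0^1(\Omega_\epsilon)$ gives the difference identity
\begin{equation*}
\int_{\Omega_\epsilon}\nabla(\eta_\epsilon-\eta)\cdot\nabla v\,dx-\lambda_\epsilon\int_{\Omega_\epsilon}(\eta_\epsilon-\eta)v\,dx=(\lambda_\epsilon-\lambda)\int_{\Omega_\epsilon}\eta v\,dx,
\end{equation*}
and combining it with $a(\eta,p)=0$ I would transfer all derivatives onto $p$. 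The only contribution of order $\mathrm{mes}(\omega_\epsilon)$ that remains is the one coming from the geometric discrepancy $\int_\Omega-\int_{\Omega_\epsilon}=\int_{\omega_\epsilon}$ applied to the coupling density $\nabla\eta\cdot\nabla p-\lambda\eta p$, which by continuity equals $\mathrm{mes}(\omega_\epsilon)\big(\nabla\eta(x_0)\cdot\nabla p(x_0)-\lambda\eta(x_0)p(x_0)\big)+o(\mathrm{mes}(\omega_\epsilon))$; the eigenvalue-shift factor $(\lambda_\epsilon-\lambda)\int_{\Omega_\epsilon}\eta\,p\,dx$ is controlled by $\lambda_\epsilon-\lambda=O(\epsilon)$ and enters only at subleading order. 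Collecting the two contributions, dividing by $f(\epsilon)=\epsilon^N$ and letting $\epsilon\to0$ through Proposition \ref{P1dt} yields the announced expression.

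The step I expect to be the main obstacle is precisely this last transfer. Since $p$ does not vanish on $\partial\omega_\epsilon$, it is not an admissible test function in $H_0^1(\Omega_\epsilon)$, so inserting $v=p$ into the difference identity produces a boundary flux $\int_{\partial\omega_\epsilon}\partial_n\eta_\epsilon\,p\,d\sigma$ on the hole that must be estimated. Making this rigorous requires the boundary-layer corrector $w_0(\epsilon^{-1}(x-x_0))$ introduced before the eigenvalue theorem, together with a proof that this flux, the remainder arising in the localisation, and the quadratic terms are all genuinely $o(\mathrm{mes}(\omega_\epsilon))$. This is the point where the smoothness of $\eta$, $p$, $\eta_0$ and $A$ near $x_0$ and the sharp estimate (\ref{aproxilamda}) are indispensable, and where the Dirichlet nature of the condition on $\partial\omega_\epsilon$ makes the analysis more delicate than in the source-term case of Theorem \ref{theorem4.1}.
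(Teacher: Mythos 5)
Your outline is, in substance, the same argument as the paper's: split $J(\Omega_\epsilon)-J(\Omega)$ into the integrals over $\Omega_\epsilon$ and over the hole, evaluate the hole integral by continuity (giving the terms $-\lvert\nabla\eta(x_0)-A(x_0)\rvert^2-\lvert\eta(x_0)-\eta_0(x_0)\rvert^2$), discard the quadratic remainders using $\lVert\eta_\epsilon-\eta\rVert_{H^1(\Omega)}=O(\epsilon^2)$, and use the adjoint state to turn the cross terms into the hole integral of the coupling density $\nabla\eta\cdot\nabla p-\lambda\eta p$. The paper organizes the last step through the Lagrangian $F(\Omega)=J(\Omega)-\int_\Omega\nabla\eta\cdot\nabla p\,dx+\lambda\int_\Omega\eta p\,dx$ instead of testing the state-difference identity with $p$, but that is bookkeeping rather than a different method; your linearization of the quadratic pieces is in fact cleaner than the paper's.

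The genuine gap is your dismissal of the eigenvalue-shift term as ``subleading''. The normalization here is $f(\epsilon)=\epsilon^2$, and by the preceding theorem $\lambda_\epsilon-\lambda=4\pi\epsilon\,\eta(x_0)^2\mathrm{cap}(\omega)+O(\epsilon^2)$ is of order $\epsilon$, which \emph{dominates} $\epsilon^2$. Hence $(\lambda_\epsilon-\lambda)\int_{\Omega_\epsilon}\eta_\epsilon p\,dx$ behaves like $4\pi\epsilon\,\eta(x_0)^2\mathrm{cap}(\omega)\int_\Omega\eta p\,dx$, and after division by $f(\epsilon)$ it diverges like $1/\epsilon$ unless $\int_\Omega\eta p\,dx=0$. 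That orthogonality is not automatic and must be built in: since $\lambda$ is an eigenvalue, the adjoint problem is solvable only under the Fredholm compatibility condition $\int_\Omega(\nabla\eta-A)\cdot\nabla\eta\,dx+\int_\Omega(\eta-\eta_0)\eta\,dx=0$, and its solution is determined only up to multiples of $\eta$; choosing the representative with $\int_\Omega\eta p\,dx=0$ one gets $(\lambda_\epsilon-\lambda)\int_{\Omega_\epsilon}\eta_\epsilon p\,dx=(\lambda_\epsilon-\lambda)\bigl(\int_{\Omega_\epsilon}(\eta_\epsilon-\eta)p\,dx-\int_{\omega_\epsilon}\eta p\,dx\bigr)=O(\epsilon^3)$, and only then does your step go through. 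Nothing of this appears in your proposal. (To be fair, the paper's own proof shares the defect: it only shows this term tends to zero, which is weaker than the required $o(\epsilon^2)$.) The second unresolved point, the flux $\int_{\partial\omega_\epsilon}\partial_n\eta_\epsilon\,p\,d\sigma$ created because $p\notin H^1_0(\Omega_\epsilon)$, you identify honestly but leave as an expectation; note that for a Dirichlet hole this flux is precisely where capacity-order contributions (order $\epsilon$ in dimension $3$, order $1/\lvert\log\epsilon\rvert$ in dimension $2$) enter, so it is of the same problematic order as the eigenvalue shift and the proof is incomplete until the two are shown to cancel or combine to $o(\epsilon^2)$ — a point the paper avoids by never making the test-function step rigorous.
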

\begin{proof}
Let's start by calculating the variation of the cost function  $J(\Omega_\epsilon)-J(\Omega)$.
\begin{eqnarray*}
J(\Omega_\epsilon)-J(\Omega)&=&\int_{\Omega_\epsilon}\lvert \nabla\eta_\epsilon-A\rvert^2dx+\int_{\Omega_\epsilon}\lvert \eta_\epsilon-\eta_0\rvert^2dx-\int_{\Omega}\lvert \nabla\eta-A\rvert^2dx-\int_{\Omega}\lvert \eta-\eta_0\rvert^2dx\\
&=&\int_{\Omega_\epsilon}\lvert \nabla\eta_\epsilon-A\rvert^2-\lvert \nabla\eta-A\rvert^2dx+\int_{\Omega_\epsilon}(\lvert \eta_\epsilon-\eta_0\rvert^2-\lvert\eta-\eta_0\rvert^2)dx\\&-&\int_{\omega_\epsilon}\lvert \nabla\eta-A\rvert^2+\lvert \eta-\eta_0\rvert^2dx.
\end{eqnarray*}
Let's calculate the following integral $\int_{\omega_\epsilon}\lvert \nabla\eta-A\rvert^2+\lvert \eta-\eta_0\rvert^2dx$. Without loss of generality, we also assume that $x_0=0$. Thus $\omega_\epsilon$ is written, $\omega_\epsilon=\epsilon\omega$ and thus $x$ in $\omega_\epsilon$ means that  $x=\epsilon y$, where $y\in\omega$. Now let us put 
\begin{eqnarray*}
B_1=\int_{\omega_\epsilon}\lvert \nabla\eta(x)-A(x)\rvert^2+\lvert \eta(x)-\eta_0(x)\rvert^2dx.
\end{eqnarray*}
Then $B_1$ can be rewritten as follows
\begin{eqnarray*}
B_1&=&\int_{\omega_\epsilon}\lvert \nabla\eta(x)-A(x)\rvert^2+\lvert \eta(x)-\eta_0(x)\rvert^2dx\\
&=&\int_{\omega_\epsilon}\lvert \nabla\eta(0)-A(0)\rvert^2+\lvert \eta(0)-\eta_0(0)\rvert^2dx+\int_{\omega_\epsilon}\lvert \nabla\eta(x)-A(x)\rvert^2-\lvert \nabla\eta(0)-A(0)\rvert^2dx\\&+&\int_{\omega_\epsilon}\lvert \eta(x)-\eta_0(x)\rvert^2-\lvert \eta(0)-\eta_0(0)\rvert^2dx\\&=&\epsilon^2 mes(\omega)(\lvert \nabla\eta(0)-A(0)\rvert^2+\lvert \eta(0)-\eta_0(0)\rvert^2)+\int_{\omega_\epsilon}\lvert \nabla\eta(x)-A(x)\rvert^2-\lvert \nabla\eta(0)-A(0)\rvert^2dx\\&+&\int_{\omega_\epsilon}\lvert \eta(x)-\eta_0(x)\rvert^2-\lvert \eta(0)-\eta_0(0)\rvert^2dx.\\
\end{eqnarray*}
By a change of variables we have
\begin{eqnarray*}
&\,&\int_{\omega_\epsilon}\lvert \nabla\eta(x)-A(x)\rvert^2-\lvert \nabla\eta(0)-A(0)\rvert^2dx=\epsilon^2\int_{\omega}\lvert \nabla\eta(\epsilon y)-A(\epsilon y)\rvert^2-\lvert \nabla\eta(0)-A(0)\rvert^2dy,\\
&\,&\int_{\omega_\epsilon}\lvert \eta(x)-\eta_0(x)\rvert^2-\lvert \eta(0)-\eta_0(0)\rvert^2dx=\epsilon^2\int_{\omega}\lvert \eta(\epsilon y)-\eta_0(\epsilon y)\rvert^2-\lvert \eta(0)-\eta_0(0)\rvert^2dy.
\end{eqnarray*}
Thanks to the Taylor expansion, we obtain
\begin{eqnarray*}
&\;&\int_{\omega}\lvert \nabla\eta(\epsilon y)-A(\epsilon y)\rvert^2-\lvert \nabla\eta(0)-A(0)\rvert^2dy=O(\epsilon),\\
&\;&\int_{\omega}\lvert \eta(\epsilon y)-\eta_0(\epsilon y)\rvert^2-\lvert \eta(0)-\eta_0(0)\rvert^2dy=O(\epsilon).
\end{eqnarray*}
Hence
\begin{eqnarray*}
B_1=\epsilon^2 mes(\omega)\left( \lvert \nabla\eta(0)-A(0)\rvert^2+\lvert \eta(0)-\eta_0(0)\rvert^2\right)+o(\epsilon^3).
\end{eqnarray*}
In the same way we have
\begin{eqnarray*}
B_2&=&\int_{\Omega_\epsilon}\lvert\nabla\eta_\epsilon\rvert^2+\lvert A\rvert^2-2\langle\nabla\eta_\epsilon, A\rangle-\lvert\nabla\eta\rvert^2-\lvert A\rvert^2+2\langle\nabla\eta, A\rangle dx\\[0.2cm] &+&\int_{\Omega_\epsilon}\lvert\eta_\epsilon\rvert^2+\lvert\eta_0\rvert^2-2\langle\eta_\epsilon,\eta_0\rangle-\lvert\eta\rvert^2-\lvert\eta_0\rvert^2+2\langle\eta,\eta_0\rangle dx\\[0.2cm] 
&=&\int_{\Omega_\epsilon}\lvert\nabla\eta_\epsilon\rvert^2-\lvert\nabla\eta\rvert^2-2\langle\nabla\eta_\epsilon-\nabla\eta, A\rangle dx+\int_{\Omega_\epsilon}\lvert\eta_\epsilon\rvert^2-\lvert\eta\rvert^2-2\langle\eta_\epsilon-\eta,\eta_0\rangle dx.
\end{eqnarray*}
Moreover we have
\begin{eqnarray*}
\int_{\Omega_\epsilon}\lvert\nabla\eta_\epsilon\rvert^2-\lvert\nabla\eta\rvert^2dx=\int_{\Omega_\epsilon}\nabla(\eta_\epsilon-\eta)\cdot\nabla(\eta_\epsilon+\eta)dx,
\end{eqnarray*}
we have the following inequality
\begin{eqnarray*}
\left|\int_{\Omega_\epsilon}\lvert\nabla\eta_\epsilon\rvert^2-\lvert\nabla\eta\rvert^2dx\right| &=&\left| \int_{\Omega_\epsilon}\nabla(\eta_\epsilon-\eta)\cdot\nabla(\eta_\epsilon+\eta)dx \right|\\&\leq&\int_{\Omega}\lVert\nabla(\eta_\epsilon-\eta)\rVert_{L^2(\Omega)}\lVert\nabla(\eta_\epsilon+\eta)\rVert_{L^2(\Omega)}dx\\
&\leq &mes(\Omega)\lVert\nabla(\eta_\epsilon-\eta)\rVert_{L^2(\Omega)}\lVert\nabla(\eta_\epsilon+\eta)\rVert_{L^2(\Omega)}.
\end{eqnarray*}
Thanks to Poincare's inequality, there exist constants $C_1 >0$ and $C_2>0$ such that:
\begin{eqnarray*}
\lVert\nabla(\eta_\epsilon-\eta)\rVert_{L^2(\Omega)}\leq C_1\lVert\eta_\epsilon-\eta\rVert_{H^1_0(\Omega)}\\[0.2cm]
\lVert\nabla(\eta_\epsilon+\eta)\rVert_{L^2(\Omega)}\leq C_2\lVert\eta_\epsilon+\eta\rVert_{H^1_0(\Omega)}.
\end{eqnarray*}
Therefore there exists a constant $C>0$ such that:
\begin{eqnarray*}
\lVert\nabla(\eta_\epsilon-\eta)\rVert_{L^2(\Omega)}\lVert\nabla(\eta_\epsilon+\eta)\rVert_{L^2(\Omega)}\leq C\lVert\eta_\epsilon-\eta\rVert_{H^1_0(\Omega)}\lVert\eta_\epsilon+\eta\rVert_{H^1_0(\Omega)}
\end{eqnarray*}
and it follows that
\begin{eqnarray*}
\left|\int_{\Omega_\epsilon}\nabla(\eta_\epsilon-\eta)\cdot\nabla(\eta_\epsilon+\eta)dx \right|\leq C\lVert\eta_\epsilon-\eta\rVert_{H^1_0(\Omega)}\lVert\eta_\epsilon+\eta\rVert_{H^1_0(\Omega)}.
\end{eqnarray*}
In the same way we have:
\begin{eqnarray*}
\int_{\Omega_\epsilon}\lvert\eta_\epsilon\rvert^2-\lvert\eta\rvert^2dx&=&\int_{\Omega_\epsilon}(\eta_\epsilon-\eta)(\eta_\epsilon+\eta)dx.
\end{eqnarray*}
This gives
\begin{eqnarray*}
\left|\int_{\Omega_\epsilon}(\eta_\epsilon-\eta)(\eta_\epsilon+\eta)dx\right|&\leq&\int_{\Omega_\epsilon}\lvert\eta_\epsilon-\eta\rvert\lvert\eta_\epsilon+\eta\lvert dx\\&\leq&\int_{\Omega}\lvert\eta_\epsilon-\eta\rvert\lvert\eta_\epsilon+\eta\lvert dx\leq mes(\Omega)\lVert\eta_\epsilon-\eta\rVert_{H^1_0(\Omega)}\lVert\eta_\epsilon+\eta\rVert_{H^1_0(\Omega)}.
\end{eqnarray*}
Since $\lVert\eta_\epsilon-\eta\rVert_{H^1_0(\Omega)}=O(\epsilon^2)$, we get
\begin{eqnarray*}
J(\Omega_\epsilon)-J(\Omega)&=&-\int_{\Omega_\epsilon}2\langle\nabla\eta_\epsilon-\nabla\eta, A\rangle dx-\int_{\Omega_\epsilon}2\langle\eta_\epsilon-\eta,\eta_0\rangle dx \\&=&-\epsilon^2 mes(\omega)\left(\lvert \nabla\eta(0)-A(0)\rvert^2+\lvert \eta(0)-\eta_0(0)\rvert^2\right) +o(\epsilon^3).
\end{eqnarray*}
So we finally get
\begin{eqnarray*}
\frac{J(\Omega_\epsilon)-J(\Omega)}{\epsilon^2}\rightarrow -mes(\omega)\left(\lvert \nabla\eta(0)-A\rvert^2+\lvert \eta(0)-\eta_0(0)\rvert^2\right)
\end{eqnarray*}
Now we introduce the Lagrangian $L(\Omega,\eta,p):=F(\Omega)$:
\begin{eqnarray*}
F(\Omega)=J(\Omega)-\int_{\Omega}\nabla\eta\nabla pdx+\lambda\int_{\Omega}\eta pdx
\end{eqnarray*}
\begin{eqnarray*}
F(\Omega_\epsilon)-F(\Omega)&=&J(\Omega_\epsilon)-J(\Omega)-\int_{\Omega_\epsilon}\nabla\eta_\epsilon\nabla pdx+\lambda_\epsilon\int_{\Omega_\epsilon}\eta_\epsilon pdx+\int_{\Omega}\nabla\eta\nabla pdx-\lambda\int_{\Omega}\eta pdx\\&=& J(\Omega_\epsilon)-J(\Omega)-\int_{\Omega_\epsilon}\nabla\eta_\epsilon\nabla pdx+\lambda_\epsilon\int_{\Omega_\epsilon}\eta_\epsilon pdx+\int_{\Omega_\epsilon}\nabla\eta\nabla pdx-\lambda\int_{\Omega_\epsilon}\eta pdx\\&\;&+\int_{\omega_\epsilon}\nabla\eta\nabla pdx-\lambda\int_{\omega_\epsilon}\eta pdx\\&=& J(\Omega_\epsilon)-J(\Omega)-\int_{\Omega_\epsilon}(\nabla\eta_\epsilon-\nabla\eta)\nabla pdx+\int_{\Omega_\epsilon}(\lambda_\epsilon\eta_\epsilon-\lambda\eta)pdx\\&\;&+\int_{\omega_\epsilon}\nabla\eta\nabla pdx-\lambda\int_{\omega_\epsilon}\eta pdx.
\end{eqnarray*}
Replacing $\lambda_\epsilon$ by its expression, we have:
\begin{eqnarray*}
\int_{\Omega_\epsilon}(\lambda_\epsilon\eta_\epsilon-\lambda\eta)vdx&=&\int_{\Omega_\epsilon}((\lambda+4\pi\epsilon\eta(0)^2cap(\omega)+O(\epsilon^2))\eta_\epsilon-\lambda\eta)pdx\\
&=&\int_{\Omega_\epsilon}\left(\lambda\eta_\epsilon+4\pi\epsilon\eta(0)^2cap(\omega)\eta_\epsilon+O(\epsilon^2)\eta_\epsilon-\lambda\eta\right) pdx\\
&=&\int_{\Omega_\epsilon}\lambda\left(\eta_\epsilon-\eta\right)p dx
 +4\pi\epsilon\eta(0)^2cap(\omega)\int_{\Omega_\epsilon}\eta_\epsilon pdx+\int_{\Omega_\epsilon}O(\epsilon^2)\eta_\epsilon pdx
\end{eqnarray*}
Using both the triangular and Cauchy Schwarz inequalities , we get
\begin{eqnarray*}
\left|\int_{\Omega_\epsilon}(\lambda_\epsilon\eta_\epsilon-\lambda\eta)vdx \right|&\leq&
\lambda\lVert \eta_\epsilon-\eta\rVert\lVert p\lVert+4\pi\epsilon\eta(0)^2cap(\omega)\lVert\eta_\epsilon\lVert \lVert p\lVert +O(\epsilon^2)\lVert\eta_\epsilon\lVert \lVert p\lVert\rightarrow 0
\end{eqnarray*}
 \begin{eqnarray*}
 \int_{\omega_\epsilon}\nabla\eta\nabla pdx-\lambda\int_{\omega_\epsilon}\eta pdx&=&\epsilon^2\int_{\omega}\nabla\eta(\epsilon y)\nabla p(\epsilon y)dy-\lambda\epsilon^2\int_{\omega}\eta(\epsilon y) p(\epsilon y)dy\\&=&\epsilon^2\int_{\omega}\nabla\eta(0)\nabla p(0)dy-\lambda\epsilon^2\int_{\omega}\eta(0) p(0)dy+o(\epsilon^3)\\&=&\epsilon^2mes(\omega)(\nabla\eta(0)\nabla p(0)-\lambda\eta(0) p(0))+o(\epsilon^3).
 \end{eqnarray*}
So we get
 \begin{eqnarray*}
  \frac{1}{\epsilon^2}\left(\int_{\omega_\epsilon}\nabla\eta\nabla pdx-\lambda\int_{\omega_\epsilon}\eta pdx\right)\rightarrow mes(\omega)\left(\nabla\eta(0)\nabla p(0)-\lambda\eta(0) p(0)\right).
  \end{eqnarray*}
Thus the topological derivative of the functional $J$ at point $x_0=0$ is given by
\begin{eqnarray*}
DT(x_0)=mes(\omega)(\nabla\eta(x_0)\nabla p(x_0)-\lambda\eta(x_0)p(x_0)-\lvert \nabla\eta(x_0)-A(x_0)\rvert^2-\lvert \eta(x_0)-\eta_0(x_0)\rvert^2)
\end{eqnarray*}
where $\eta$ is the solution of the Helmholtz equation (\ref{Eqd1}) and $p$ the solution of the adjoint problem
\begin{eqnarray*}
\text{Find}\;p\in H^1_0(\Omega),\;-\int_\Omega\nabla p\nabla vdx+\lambda\int_\Omega pvdx=2\int_\Omega (\nabla\eta-A)\nabla vdx+2\int_\Omega (\eta-\eta_0)vdx\;\;\forall\; v\in H^1_0(\Omega).
\end{eqnarray*}
\end{proof}
\newline

\noindent
ACKNOWLEDGEMENT\\
\newline
The authors would like to thank Volker Schulz ( University Trier, Trier, Germany) and Luka Schlegel (University Trier, Trier, Germany) for helpful and interesting discussions within the project Shape Optimization Mitigating Coastal Erosion (SOMICE).


\begin{thebibliography}{99}

\bibitem{Amstutz} S. Amstutz, \emph{ Aspects th\'eoriques et num\'eriques en optimisation de forme topologique}, Th\`ese de doctorat de l'Institut National des Sciences Appliqu\'ees de Toulouse, n0 709, (2003).
\bibitem{Amstutz1} S. Amstutz, \emph{The topological asymptotic for the Helmholtz equation: insertion of a hole, a crack and a dielectric object.} Rapport MIP no. 03-05, (2003).
\bibitem{Amandine} A. Berger,  \emph{Optimisation du spectre du Laplacien avec conditions de Dirichlet et Neumann dans $\mathbb R^2$ et $\mathbb R^3$}. Equations aux d\'eriv\'ees partielles [math.AP]. Universit\'e de Neuchatel (Suisse),. Francais MNT: 2015GREAM036. tel-01266486, (2015).

\bibitem{Ber1}J.C.W Berkhoff, \emph{Computation of combined refraction-diffraction.} In: ASCE (Ed.), Proc. 13th Coastal Eng. Conf., Vancouver, pp. 471-490, (1972).

 \bibitem{Ber2} J.C.W. Berkhoff,  \emph{Mathematical Models for Simple Harmonic Linear Water Waves. Wave Refraction and Diffraction,} Publ. No. 163, Delft Hydraulics Laboratory, Delft, The Netherlands, (1976).
 
\bibitem{bucur} D. Bucur, \emph{How to prove existence in shape optimization.} Arch. Rational Mech. Anal. \text{122}, 183-195, (1993).
\bibitem{BucurButtazo} D. Bucur, G. Buttazzo, \emph{Variational methods in some shape optimization problems},  Progress in Nonlinear Differential Equations \textbf{65}, Birkhauser Verlag, Basel (2005).%SNS Pisa (2002).
\bibitem{buttazo-dalmazo} G. Buttazzo, G. Dal Maso, \emph{An existence result for a class of shape optimization problems.} Arch. Rational Mech. Anal. \text{122}, 183-195, (1993),.


\bibitem{buttazo-shrivastava} G. Buttazzo, H. Shrivastava, \emph{Optimal shapes for general integral functionals. }  arXiv: 1803.09310v1 [math.OC] 25 Mar 2018.

\bibitem{BucurButtazoHenrot} D. Bucur, G. Buttazzo  and A. Henrot, \emph{Existence results for some optimal partition problems.}, Adv. Math. Sci. Appl., \textbf{8}, 571-579, (1998).
\bibitem{Caubet} F. Caubet, M. Dambrine and R. Mashadevan. \emph{Shape derivatives of eigenvalue functionals.} Part ones scalar problems. (2020). hal-02511124.
%\bibitem{Caubet2}

\bibitem{Delfour} M. Delfour and J.P.  Zol\'esio, \emph{Shapes and Geometries. Analysis, Differential Calculus and Optimization,} Advances in Design and Control SIAM, Philadelpia, PA (2001).
\bibitem{Evans} L. C. Evans, \emph{Partial Differential Equations,} Graduate Studies in Mathematics, American Mathematical Society, \textbf{19}, Providence, RI, (1998).
\bibitem{FaFreSe}  I. Faye, E. Fr\'{e}nod and D. Seck, \emph{ Singularly degenerated parabolic equations and applications to seabed morphodynamics in tided environment}, Discrete and continuous dynamical System-Series A, \textbf{29}, Number 3,(2011).


\bibitem{Friedman} A. Friedman and M. S. Vogelius, \emph{Identification of small inhomogeneities of extreme conductivity by boundary measurements: a theorem on continuous dependence.} Arch. Rat. Mech. Anal. \textbf{105}(4), 299-326, (1989).
\bibitem{Hadamard} J. Hadamard, \emph{M\'emoire sur le probl\`eme d'analyse relatif \`a l'\'equilibre des plaques \'elastiques encastr\'ees.} M\'emoire des savants \'etrangers \textbf{33}, 515-629 (1907).
\bibitem{He1} A. Henrot. \emph{Extremum problems for eigenvalues of elliptic operators.} Frontiers in Mathematics. Birkhauser Verlag, Basel, (2006).
\bibitem{He2} A. Henrot (editor), Shape optimization and spectral theory, Berlin: De Gruyter (ISBN 978-3- 11-055085-6/hbk; 978-3-11-055088-7/ebook). x, 464 p., open access (2017)., 2017, pp. x + 464.
\bibitem{Henrot} A. Henrot and M. Pierre, \emph{Variation et Optimisation de Formes. Une Analyse G\'eom\'etrique,} Math\'ematiques et Applications, \textbf{48}, Springer, Berlin, (2005).
\bibitem{Idier} D. Idier, \emph{Dunes et Bancs de Sables du Plateau Continental: Observations in-situ et Modelisation Numerique}," Ph.D. thesis, INP Toulouse, France, (2002).
\bibitem{IAMB}D. Isebe, P. Azerad,  B. Mohammadi, F. Bouchette, \emph{Optimal shape design of defense structures for minimizing short wave impact,}
Coastal Engineering \textbf{55}, 35-46, (2008). 
\bibitem{Masmoudi} M. Masmoudi, \emph{The topological asymptotic, in Computational Methods for Control Applications}, R. Glowinski, H. Kawarada, and J. Periaux, eds., GAKUTO Internat. Ser. Math. Sci. Appl. 16, Tokyo, Japan, 53-72.,(2001)
\bibitem{Meske} R. Meske,  J. Sauter,  and E. Schnack, \emph{Nonparametric gradient-less shape optimization for real-world applications}. English. Structural and Multidisciplinary Optimization, \textbf{30}(3), 201-218, (2005).

\bibitem{Chara} C. C. Mitropoulou, Y. Fourkiotis, N. D. Lagaros and M. G. Karlaftis, \emph{
Evolution Strategies-Based Metaheuristics in Structural Design Optimization}. in Metaheuristic Applications in Structures and Infrastructures, Elsevier, 79-102, (2013).

\bibitem{Pironneau-moh} B. Mohammadi, O. Pironneau, \emph{Shape optimization in fluid mechanics} Annu. Rev. Fluid Mech, 255-279, (2004).
\bibitem{Murat} F. Murat and J. Simon, \emph{\'{E}tudes de probl\`{e}mes d'optimal design}. Lecture Notes in Comput. Sci., 41, Springer-Verlag, Berlin, 54-62, (1976),.
\bibitem{Murat2} F. Murat and J. Simon, \emph{Sur le contr\^ole par un domaine g\'eom\'etrique}, Th\`ese d'\'etat, Universit\'e Pierre et Marie Curie, Paris, (1976).

\bibitem{Morat3} F. Murat and L. Tartar, \emph{Calcul des variations et homog\'en\'eisation,} in Les m\'ethodes de l'homog\'en\'eisation: Th\'eorie et Applications en Physique, Eyrolles, Paris, 319-369, (1985).
\bibitem{Nazarov} S. A. Nazarov and J. Sokolowski, \emph{Asymptotics Analysis of shape functional}, J. Math. Pures Appl. \textbf{82}, 125-196 (2003).
\bibitem{NovotnySokolo} A. A. Novotny and J. Sokolowski \textit{Topological Derivative in Shape Optimization. Interaction of mechanics and mathematics,} Springer, Heidelberg, New York, (2013).

 \bibitem{NovotnySokoloZo}A. A. Novotny, J. Sokolowski and A. Zochowski \textit{Applications of the topological Derivative Method}; Studies in Systems, Decision and Control,  \textbf{188}, Springer, (2019).
%\bibitem{Samet} B. Samet. \emph{Analyse et applications}. Mathématiques [math]. Ecole supérieur des Sciences et Techniques  de Tunis, 2010
\bibitem{PCP} V.G. Panchang, B. Cushman-Roisin and B.R. Pearce,  Combined Refraction-Diffraction of Short-Waves in Large Coastal Regions, Coastal Engineering, \textbf{12}, 133-156 (1988).

\bibitem{Pironneau} O. Pironneau, \emph{Optimal shape design for elliptic systems}, System Modeling and Optimization, (1982), Springer, Berlin, Heidelberg.
\bibitem{Rousselet} B. Rousselet. \emph{Shape design sensitivity of a membrane}. J. Optim Theory Appl \textbf{40}, 595-623, (1983). https://doi.org/10.1007/BF00933973
\bibitem{Samet} B. Samet, \emph{The topological asymptotic analysis for the Maxwell equations and applications}. Mathematics [math]. Universit\'e Paul Sabatier - Toulouse III, (2004). English.
%

\bibitem{Samet2} B. Samet, S. Amstuz and M. Masmoudi, \emph{The Topologicical asymptotic for the Helmholtz equation}, SIAM J. Control. Optim, \textbf{42}, no. 5, pp 1523-1544, (2003).

\bibitem{Schumacher} A. Schumacher, \emph{Topologieoptimierung von bauteilstrukturen unter verwendung von lochpositionierungkriterien.} Ph.D. Thesis, Universitat-Gesamthochschule-Siegen, Siegen-Germany (1995).
\bibitem{Simon} J. Simon, \emph{Differentiation with respect to the domain in boundary value problems,} Numer. Funct. and Optimiz. \textbf{2}(7-8), 649-687  (1980).
\bibitem{Sokolowski} J. Sokolowski and J. P. Zolesio, \emph{Introduction to Shape Optimization: Shape Sensitivity Analysis,} Springer Series in Computational Mathematics, \textbf{10} Springer, Berlin (1992).
%\bibitem{SokoZo} J. Sokolowski and J.P. Zol\'esio, \emph{Introduction to shape optimization: shape
%sensitivity analysis.} Springer series in computational mathematics. Springer-Verlag, 1992.
\bibitem{thesis} B. Velichkov, \emph{Existence and regularity results
for some shape optimization problems,} PHD thesis, Scuola Normale Superiore di Pisa and l'Universit\'e de
Grenoble, (2013).





\end{thebibliography}
\end{document}